\def\bfx{{\bf x}}
\def\bfy{{\bf y}}
\def\det{{\rm det}}
\theoremstyle{thmstyleone}%
\newtheorem{theorem}{Theorem}
\newtheorem{lemma}[theorem]{Lemma}%
\theoremstyle{thmstyletwo}%
\newtheorem{example}{Example}%
\newtheorem{remark}{Remark}%
\theoremstyle{thmstylethree}%
\begin{document}

\title{ Multivariate Splines and Their Applications}
\author*[1]{\fnm{Ming-Jun} \sur{Lai}}\email{mjlai@uga.edu} 
\affil[1]{\orgdiv{Department of Mathematics}, \orgname{University of Georgia}, \orgaddress{\city{Athens}, \postcode{30602}, \state{Georgia}, \country{USA}}}

\abstract{This paper begins by reviewing numerous theoretical advancements in the field of multivariate splines, primarily contributed by Professor Larry L. Schumaker. These foundational results have paved the way for a wide range of applications and computational techniques. The paper then proceeds to highlight various practical applications of multivariate splines. These include scattered data fitting and interpolation, the construction of smooth curves and surfaces, and the numerical solutions of various partial differential equations, encompassing both linear and nonlinear PDEs. Beyond these conventional and well-established uses, the paper introduces a novel application of multivariate splines in function value denoising. This innovative approach facilitates the creation of LKB splines, which are instrumental in approximating high-dimensional functions and effectively circumventing the curse of dimensionality.} 

\keywords{Approximation Order, Bivariate Splines, Curve Construction, Dimension of Spline Spaces,  
 Kolmogorov Superposition Theorem, Scattered Data Fitting/Interpolation, Surface Construction, Trivariate Splines} 

\maketitle

\section{Introduction}\label{sec1}

This paper commemorates the 80th birthday of Professor Larry L. Schumaker and is derived from a plenary talk given by the author at a conference organized by Professor M. Neamtu and colleagues from May 15 to 18, 2023. A key area of Professor Schumaker's research, multivariate splines, is the focus of this paper. While Professor Schumaker has an extensive range of publications, this paper will selectively delve into his significant contributions to multivariate splines, highlighting their impact and ongoing developments in the field.

Multivariate splines are piecewise polynomial functions defined over a triangulated domain in $\mathbb{R}^d$ (where $d\ge 2$) or over a spherical triangulation, such as that of the Earth. These finite-dimensional spaces are extremely versatile for various approximation tasks. For instance, to visualize a set of scattered data points in 2D or 3D, bivariate or trivariate splines are often employed to fit or interpolate the data over the given points. The resulting spline surfaces or isosurfaces provide insights into the data's behavior. Multivariate splines are also integral to the numerical solutions of partial differential equations, akin to the widely recognized finite element methods. Another classic use of these splines is in constructing interpolatory smooth curves and surfaces. Furthermore, they are applied in image processing tasks, such as image edge detection (refer to \cite{GL13}) using box spline functions, and in image deformation. An illustration of image deformation, computed using bivariate splines, is presented in Figure~\ref{Bday} and will be further explained in a subsequent section.

\begin{figure}[htpb]
\centering
\includegraphics[width = 0.8\textwidth]{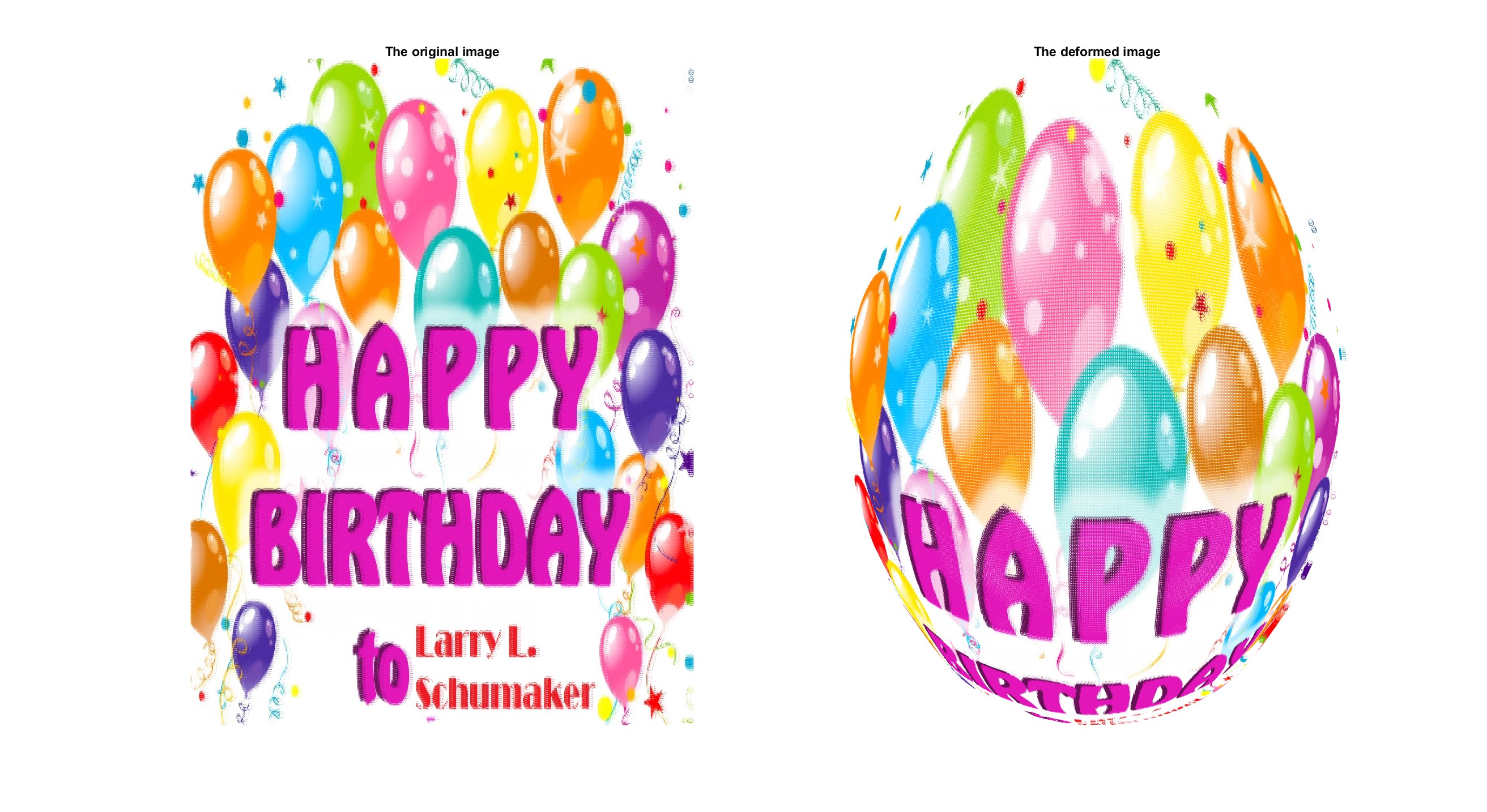}
\caption{A poster  (left) is deformed to the one on oval shaped domain (right) like a printed balloon}
\label{Bday}
\end{figure}

Multivariate splines have also been effectively employed in the denoising of functions. This is particularly notable in the context of the Kolmogorov superposition theorem, which is utilized for approximating multi-dimensional functions. The computation of the inherent K-inner functions within this theorem has traditionally been challenging, leading to a perception of its impracticality (as discussed in \cite{GP89}). However, the use of bivariate and trivariate splines for denoising Kolmogorov B-splines (KB splines) has revitalized the numerical approximation of high-dimensional continuous functions using the Kolmogorov superposition theorem, rendering it both feasible and meaningful.

The remainder of this paper is structured as follows: It will commence with a precise definition of multivariate splines, followed by a detailed discussion of three major research problems associated with these splines. A significant contribution in this field is the introduction of spherical splines in the 1990s by Professor Schumaker and his long-term collaborators. These spherical splines, encompassed as a specific category within multivariate splines, have enabled the approximation of geoscientific data. Beyond the theoretical exploration of multivariate splines, the paper will delve into their computation and diverse applications, illustrating several examples to demonstrate their extensive applicability.

\section{Definition of Multivariate Splines and Main Research Problems}
Multivariate splines are piecewise polynomial functions, 
usually defined on a triangulation in 2D, or a tetrahedral partition in 3D, 
or a spherical triangulation, or a  simplicial partition in $\mathbb{R}^n$, $n\ge 4$.  

Let ${\bf P}_d$ be the space of all polynomials of degree $d\ge 1$. 
Let $\Delta$ be a triangulation of a domain  $\Omega\subset \mathbb{R}^n, n\ge 2$. 
For integers $d\ge 1$, $-1\le r\le d$, let  
\begin{equation}
\label{splinedef}
S^r_d(\Delta) =\{ s\in C^r(\Omega), s|_t\in {\bf P}_d, t\in \Delta\}
\end{equation}
be the spline space of smoothness $r$ and degree $d$ over   $\Delta$. When $d=1$ and $r=0$, the spline space $S^0_1(\triangle)$ is the well-known 
finite element space.  For $r=-1$, $S^{-1}_d(\triangle)$ is the space of discontinuous splines over $\triangle$ which has been used for discontinuous Galerkin 
method for numerical solution of partial differential equations.    Similarly, when $r=0$, $S^0_d(\triangle)$ is used for continuous Galerkin method.

The primary objective of multivariate spline research is to accurately approximate both unknown and complex known functions in 2D, 3D, and spherical settings. Over the past 30 years, multivariate splines have evolved to become both flexible and convenient tools, proving to be immensely valuable in a variety of numerical applications, which will be elaborated on in subsequent sections.

To effectively employ multivariate splines for data fitting and approximation in computational settings, several fundamental questions arise. Firstly, what is the dimension of the spline space $S^r_d(\triangle)$ for any given values of $r$, $d$, and triangulation $\triangle$? Secondly, how proficiently can the spline space $S^r_d(\triangle)$ approximate both unknown and complex known functions? In other words, what are the approximation capabilities of $S^r_d(\triangle)$? The third pertinent question concerns the practical implementation of these spline functions in various applications.

Addressing these questions is challenging. For instance, a triangulation $\triangle$ can present significant complexity. As illustrated in \cite{WWLG20}, the locations of given data points and their boundary vertices are depicted in Figure~\ref{data1}. An associated triangulation, shown in Figure~\ref{tri1}, was employed to approximate the given data values and to estimate the density of mercury pollution across the entire area of the triangulated domain using spline functions in $S^1_5(\triangle)$.  

\begin{figure}[htpb]
\begin{center}
\includegraphics[width = 0.5\textwidth]{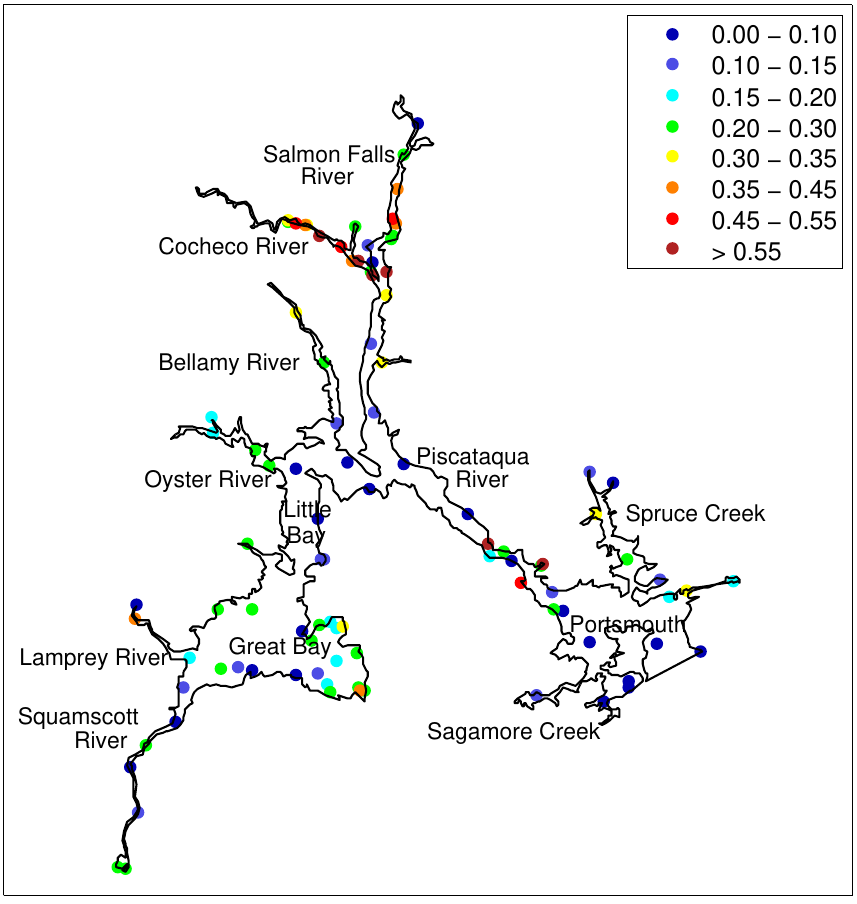}
\end{center}
\caption{A set of data locations associated with the mercury pollution as explained in \cite{WWLG20}}
\label{data1}
\end{figure}

\begin{figure}[htpb]
\includegraphics[width = 1.1\textwidth]{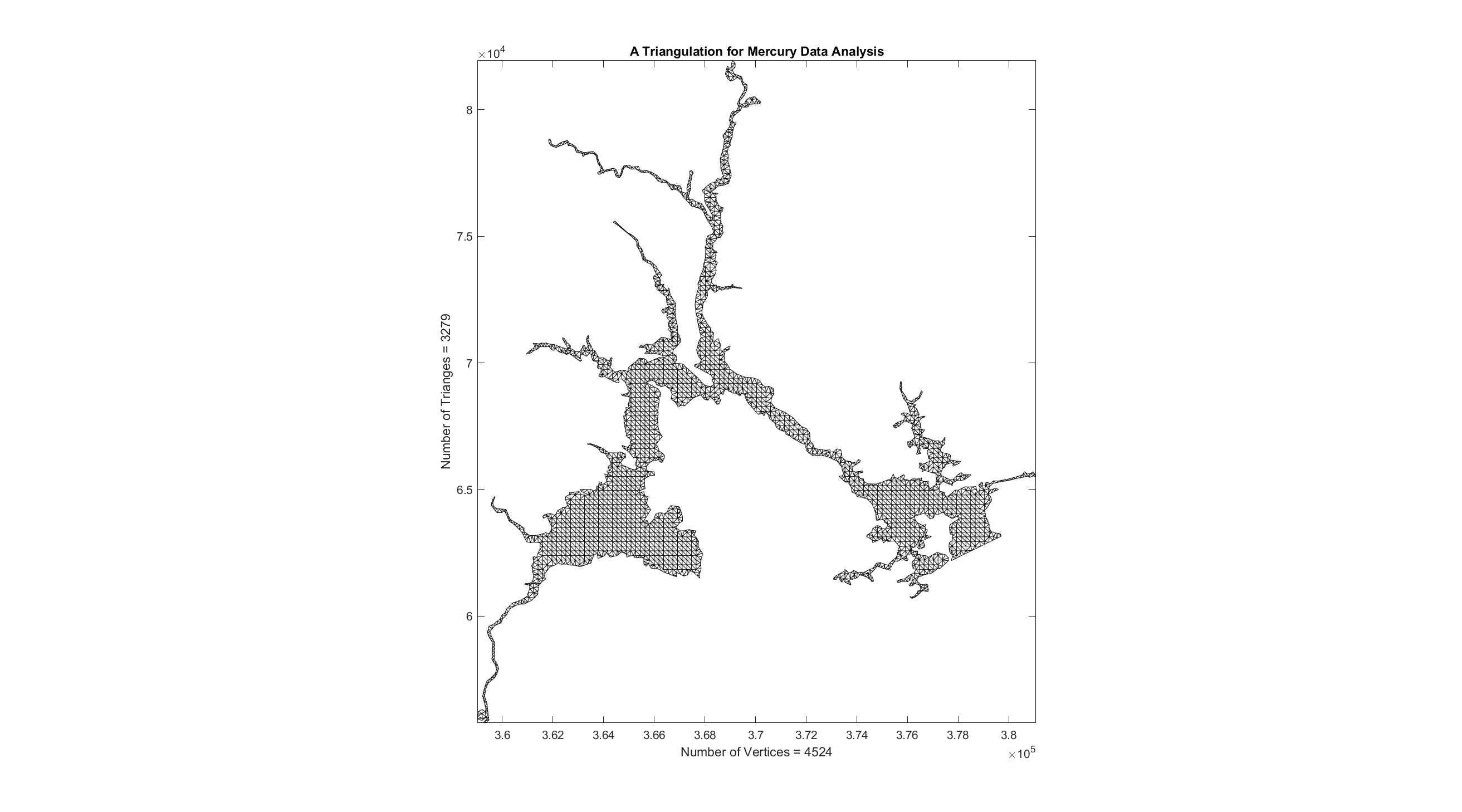}
\caption{An associated triangulation of the given data set in Figure~\ref{data1} } \label{tri1}
\end{figure}

From Figure~\ref{tri1}, one can see that the dimension $S^1_5(\triangle)$ is not easy to find or estimate.

\subsection{Dimension of Spline Spaces} 
During the 1970s and 1980s, Professor Schumaker devoted considerable effort to addressing the first key question in multivariate spline research. He published extensively on this subject, and the following is a summary of some notable findings derived from his work.  

\begin{theorem} [Schumaker, 1979\cite{S79} and Schumaker, 1984\cite{S84}]
\label{Sdim2}
Suppose that $\triangle$ is a triangulation of a given domain $\Omega\subset \mathbb{R}^2$. 
For any $0\le r\le d$, 
\begin{equation}
D+ \sum_{v\in {\cal V}_I}\sigma_v \le \dim (S^r_d(\triangle)) \le D + \sum_{v\in {\cal V}_I} \tilde{\sigma}_v
\end{equation}
where 
$$
D = {d+2\choose 2} + {d-r+1\choose 2}E_I - [{d+2\choose 2} - {r+2\choose 2}]V_I
$$
and $\sigma_v=\sum_{j=1}^{d-r} (r+j+1-jm_v)_+$ and $\tilde{\sigma}_v= \sum_{j=1}^{d-r} (r+j+1-j \tilde{m}_v)_+$. 
\end{theorem}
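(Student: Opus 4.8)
The plan is to use the \emph{smoothing cofactor--conformality} method, since it reproduces exactly the edge and vertex structure visible in the bounds. First I would encode each spline $s\in S^r_d(\triangle)$ by its polynomial on one seed triangle together with the jumps incurred when crossing interior edges. If $\ell_e=0$ is the equation of an interior edge $e$, the classical smoothness criterion states that $s$ is $C^r$ across $e$ if and only if the difference of the two polynomial pieces meeting along $e$ has the form $\ell_e^{\,r+1}q_e$ for a \emph{smoothing cofactor} $q_e\in\mathbf{P}_{d-r-1}$. Hence each interior edge carries $\binom{d-r+1}{2}=\dim\mathbf{P}_{d-r-1}$ free parameters, and $s$ is recorded by the seed polynomial (contributing $\binom{d+2}{2}$ parameters) together with the cofactor family $\{q_e\}$ ranging over the $E_I$ interior edges.

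Second, I would impose the \emph{conformality} conditions that force these jumps to close up. Walking through the triangles around a fixed interior vertex $v$ and returning to the start, the accumulated jumps must cancel; after centering coordinates at $v$, where edges $e_1,\dots,e_n$ meet, this becomes the polynomial identity $\sum_{i}\pm\,\ell_{e_i}^{\,r+1}q_{e_i}\equiv 0$ in $\mathbf{P}_d$. Since each $\ell_{e_i}$ is a linear form vanishing at $v$, the left-hand side carries no terms of degree $\le r$, so the conformality relation amounts to $\binom{d+2}{2}-\binom{r+2}{2}$ scalar equations on the cofactors, one per monomial of degree between $r+1$ and $d$; boundary vertices contribute nothing, as their stars do not close up. Writing $\rho_v$ for the rank of this system and $\epsilon_v$ for its nullity, so that $\rho_v=\left[\binom{d+2}{2}-\binom{r+2}{2}\right]-\epsilon_v$, a global parameter count yields
\[
\dim S^r_d(\triangle)=\binom{d+2}{2}+\binom{d-r+1}{2}E_I-\sum_{v\in{\cal V}_I}\rho_v=D+\sum_{v\in{\cal V}_I}\epsilon_v .
\]

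The remaining and decisive task is to estimate $\epsilon_v$, and here the geometry of the vertex enters. Splitting the conformality identity into homogeneous pieces, its component in degree $r+j$ (for $j=1,\dots,d-r$) asks precisely for the codimension, inside the $(r+j+1)$-dimensional space of bivariate forms of degree $r+j$, of the span of $\{\ell_{e_i}^{\,r+1}g:\deg g=j-1\}$, i.e.\ of the degree-$(r+j)$ part of the ideal generated by the powers $\ell_{e_i}^{\,r+1}$. This is a classical computation on ideals of powers of linear forms whose value is controlled by the number of \emph{distinct slopes} among the edges at $v$. With $m_v$ distinct slopes this span has dimension at most $j\,m_v$, forcing a codimension of at least $(r+j+1-j\,m_v)_+$; summing over $j$ gives the lower estimate $\sigma_v\le\epsilon_v$, while bounding the span from below in terms of the smaller effective count $\tilde m_v$ gives the matching upper estimate $\epsilon_v\le\tilde\sigma_v$. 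I expect this rank analysis to be the main obstacle: the true value of $\epsilon_v$ can be inflated by accidental linear dependences among the powers $\ell_{e_i}^{\,r+1}$ that the slope multiplicities alone do not record, so it can only be trapped between $\sigma_v$ and $\tilde\sigma_v$ rather than pinned down, which is exactly why the statement is a two-sided estimate and not an identity.

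Finally I would assemble the pieces: summing $\sigma_v\le\epsilon_v\le\tilde\sigma_v$ over ${\cal V}_I$ and adding the seed-plus-edge term $D$ gives $D+\sum_{v}\sigma_v\le\dim S^r_d(\triangle)\le D+\sum_{v}\tilde\sigma_v$. The Euler relation $T=E_I-V_I+1$ is needed only to confirm that the combinatorial term $D$ matches the stated form; all the substance is concentrated in the local analysis at the interior vertices.
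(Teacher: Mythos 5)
First, note that the paper itself offers no proof of this theorem: it is quoted verbatim from Schumaker's 1979 and 1984 papers, so your proposal has to stand on its own merits. Your cofactor--conformality framework is a legitimate classical route, and the lower-bound half of your argument is essentially sound once stated correctly: the global conformality system is the stack of the vertex blocks, rank is subadditive over such blocks, so with $C=\binom{d+2}{2}+\binom{d-r+1}{2}E_I$ and $\rho$ the \emph{global} rank one gets $\dim S^r_d(\triangle)=C-\rho\ge C-\sum_{v}\rho_v=D+\sum_{v}\epsilon_v\ge D+\sum_v\sigma_v$, using your degree-by-degree estimate that the span of $\{\ell_{e_i}^{\,r+1}g:\deg g=j-1\}$ has dimension at most $jm_v$. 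But your central claim --- the \emph{equality} $\dim S^r_d(\triangle)=C-\sum_v\rho_v=D+\sum_v\epsilon_v$ --- is false: the vertex blocks share variables (the cofactor of an interior edge appears in the blocks of both of its endpoints), so $\rho$ can be strictly smaller than $\sum_v\rho_v$. The Morgan--Scott triangulation with $r=1$, $d=2$ is a concrete counterexample: it has $V_I=3$, $E_I=9$, every interior vertex has $m_v=4$ distinct slopes, so $\sigma_v=(3-4)_+=0$ and $D=6+9-9=6$; your formula would force $\dim=6$ in all cases, yet in the symmetric configuration the dimension is $7$. Moreover, your diagnosis of where the two-sided bound comes from is backwards: in two variables the local corank is \emph{exactly} $\sigma_v$ (for $m$ distinct lines through $v$ the span of $\{\ell_i^{\,r+1}h:\deg h=j-1\}$ in degree $r+j$ has dimension exactly $\min(r+j+1,jm)$ --- there are no ``accidental dependences'' locally), so if your equality held, the theorem would collapse to an exact combinatorial formula, which is impossible since the dimension is known to depend on the actual vertex coordinates and not just on slope counts.

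Consequently your upper-bound half has no support. An upper bound on the dimension requires a \emph{lower} bound on the global rank $\rho$, and no collection of estimates made independently at each vertex can provide one, precisely because of the shared-variable over-counting above. This is also where you misread $\tilde m_v$: it is not a ``smaller effective count'' intrinsic to the star of $v$, but a global bookkeeping device. Schumaker's 1984 argument orders the interior vertices and, at each vertex in turn, counts only conditions involving edges whose slopes have not already been used at previously processed vertices; these rows are then certified to be independent of all earlier ones, yielding $\rho\ge V_I\bigl[\binom{d+2}{2}-\binom{r+2}{2}\bigr]-\sum_v\tilde\sigma_v$ and hence the upper bound. Your local inequality ``$\epsilon_v\le\tilde\sigma_v$'' is trivially true (since $\epsilon_v=\sigma_v\le\tilde\sigma_v$) but proves nothing about $\rho$. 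A secondary gap: your encoding (seed polynomial plus edge cofactors with closure only around interior vertices) characterizes splines only on simply connected domains; for domains with holes there are additional loop conformality conditions that a complete proof must account for.
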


In this context, $m_v$ denotes the count of edges with distinct slopes connecting to a vertex $v$, and $\tilde{m}_v$ represents the count of such edges that have not been previously considered. This theorem demonstrates that the dimension of the spline space $S^r_d(\triangle)$ is influenced by the edge slopes in $\triangle$. Determining the exact dimension of a spline space for any given degree $d$ and smoothness $r\ge 1$ over any triangulation is notably challenging. However, the exact dimension of $S^r_d(\triangle)$ becomes more tractable when the smoothness $r$ is less than or equal to 0. 

The task of defining dimension formulas for trivariate spline spaces is much more complex, even though results akin to Theorem~\ref{Sdim2} have been achieved.

\begin{theorem} [Alfeld,  Schumaker, and Sirvent, 1992\cite{ASS92}]
\label{Sdim3}
Suppose that a tetrahedralization $\triangle$ is shellable. For any $0\le r\le d$, 
\begin{equation}
L(r,d) \le \dim (S^r_d(\triangle)) \le U(r,d)
\end{equation}
where 
$$
L(r,d) = {d+3\choose 3} +  n_0\ell_0+ n_1 \ell_1 + n_2\ell_2+ n_3\ell_3
$$
$$
U(r,d) = {d+3\choose 3} +  n_0 u_0+ n_1 u_1 + n_2u_2+ n_3u_3
$$
\end{theorem}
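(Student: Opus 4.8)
The plan is to establish both inequalities at once by an induction that assembles the spline space one tetrahedron at a time along the shelling order, working throughout in the Bernstein--Bézier (B-)form. By shellability I may order the tetrahedra $t_1,\dots,t_N$ so that, for each $k\ge 2$, the set $t_k\cap\Omega_{k-1}$ with $\Omega_{k-1}:=t_1\cup\cdots\cup t_{k-1}$ is a nonempty union of triangular faces of $t_k$ forming a topological disk. Representing each polynomial piece by its array of B-coefficients relative to the domain points of its tetrahedron, a function in $S^r_d(\triangle)$ is encoded by a global coefficient vector, and $C^r$ continuity across a shared face is equivalent to an explicit system of linear relations among the coefficients attached to domain points within distance $r$ of that face. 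The dimension of $S^r_d(\triangle)$ is then the number of free coefficients minus the rank of the full smoothness system.

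First I would track the increment contributed at each shelling step. The initial tetrahedron $t_1$ carries $\binom{d+3}{3}=\dim\mathbf{P}_d$ coefficients and no constraints, giving the common leading term of $L$ and $U$. Adjoining $t_k$ introduces another $\binom{d+3}{3}$ coefficients while the smoothness relations across the faces of $t_k\cap\Omega_{k-1}$ pin down some of them, so the increment is $\binom{d+3}{3}-\rho_k$, where $\rho_k$ is the rank of the relations newly activated at step $k$. I would then sort the tetrahedra into classes according to how they attach---essentially by the number of faces along which $t_k$ meets $\Omega_{k-1}$---which is exactly the partition recorded by $n_0,n_1,n_2,n_3$. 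Summing the increments over each class reduces the theorem to bounding $\rho_k$ within a class.

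The two bounds arise by estimating $\rho_k$ from above and below: a lower bound on $\rho_k$ (constraints maximally effective) minimizes the increment and yields $L(r,d)$ with coefficients $\ell_i$, while an upper bound on $\rho_k$ (constraints maximally redundant) maximizes the increment and yields $U(r,d)$ with coefficients $u_i$. For a tetrahedron glued along a single face the $C^r$ relations are manifestly independent, their rank equals the number of coefficients within distance $r$ of that face, and the increment is determined exactly, so $\ell_i=u_i$ in that class. The entire gap between $L$ and $U$ therefore comes from the multi-face cases, in which relations imposed across two or three faces act on the same coefficients clustered near their common edges and vertices and are no longer independent.

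The main obstacle is pinning down that overlap. When $t_k$ meets $\Omega_{k-1}$ in two or three faces, the smoothness relations across those faces share unknowns along the common edge, and their exact rank is sensitive to the geometry---for instance to whether certain edges become collinear or faces coplanar---which is the very degeneracy that keeps the exact trivariate dimension open in general. The technical core is thus a local rank computation for the super-smoothness-type conditions concentrated around an interior edge, and around an interior vertex, of the attaching disk, bounding the number of independent relations above and below rather than exactly. Here the disk topology furnished by shellability is indispensable: it forbids cycles in the attaching region, so the constraints can be organized edge-by-edge and vertex-by-vertex and then summed into the closed forms $\sum_i n_i\ell_i$ and $\sum_i n_i u_i$ asserted in the theorem.
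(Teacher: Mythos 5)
First, a caveat about the comparison itself: this survey states Theorem~\ref{Sdim3} as a citation to \cite{ASS92} and contains no proof of it --- indeed it never defines the quantities $n_0,\dots,n_3$, $\ell_0,\dots,\ell_3$, $u_0,\dots,u_3$ --- so your proposal can only be measured against the original argument of Alfeld, Schumaker and Sirvent. Your overall architecture does agree with that argument: order the tetrahedra by a shelling, represent splines in Bernstein--B\'ezier form, and track how many free coefficients are gained as each tetrahedron is attached, so that the upper bound amounts to an incremental construction of a determining set and the lower bound to an incremental estimate of the rank of the smoothness conditions. Your single-face observation is also correct: across one shared face the $C^r$ conditions are independent and determine exactly the coefficients within distance $r$ of that face, so the increment there is exactly $\binom{d-r+2}{3}$. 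However, the proposal stops exactly where the theorem begins. The entire quantitative content of the result is the explicit values of $\ell_i$ and $u_i$, i.e., the local analysis of $C^r$ conditions imposed across two or three faces meeting along common edges and vertices, where the conditions are dependent and the dependencies are geometry-sensitive. You name this ``the technical core'' and then defer it; ``organizing the constraints edge-by-edge and vertex-by-vertex'' describes the task rather than performing it. As written, the argument shows only that bounds of the stated \emph{shape} exist with unspecified constants, which is vacuous, since the theorem is precisely the identification of those constants.

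Two further points would block the write-up even at the structural level. (i) Your claim that classification by the number of faces along which $t_k$ meets $\Omega_{k-1}$ ``is exactly the partition recorded by $n_0,n_1,n_2,n_3$'' cannot be right: in a shelling each attached tetrahedron shares one, two, or three faces, giving only three classes (the initial tetrahedron is already accounted for by the leading term $\binom{d+3}{3}$), while the theorem has four terms. The bounds in \cite{ASS92} are organized by a finer count --- paralleling the bivariate Theorem~\ref{Sdim2}, whose correction terms are indexed by interior edges and interior vertices --- for instance distinguishing whether an attachment introduces a new vertex; a three-class bookkeeping cannot produce the stated form. (ii) Your rank bookkeeping is inverted: since the dimension increment at step $k$ is $\binom{d+3}{3}-\rho_k$, proving $\dim S^r_d(\triangle)\ge L$ requires an \emph{upper} bound on the incremental rank $\rho_k$, and proving $\dim S^r_d(\triangle)\le U$ requires a \emph{lower} bound on $\rho_k$; you state the opposite pairing (your parentheticals ``maximally effective''/``maximally redundant'' suggest you have the right picture, but the inequalities as written do not close). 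Moreover, for the upper bound $U$ one must show that the designated new conditions are independent \emph{modulo all previously imposed conditions}, not merely independent among themselves; this is exactly what the determining-set formulation in the original proof accomplishes and what a naive ``rank of the new rows'' argument does not.
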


In this context, a tetrahedralization, denoted as $\triangle$, is defined as an assembly of tetrahedra with the property that any pair of tetrahedra either do not intersect or share one of the following common elements: a triangular face, an edge, or a vertex. A tetrahedralization $\triangle$ is considered shellable if there exists a specific sequencing of its tetrahedra, $T_1, \cdots, T_n$, such that one can construct $\triangle$ by starting with $\triangle_1=T_1$ and sequentially adding each $T_{k+1}$ to $\triangle_k$. In this process, each newly added tetrahedron $T_{k+1}$ must share one, two, or three triangular faces with the existing structure $\triangle_k$, for $k=1, 2, 3, \cdots, n-1$. It is important to note that not all tetrahedralizations $\triangle$ are shellable. A significant theoretical question in this field is whether any polyhedral domain --  defined as a simply connected domain with piecewise linear faces -- can always be represented by a shellable tetrahedralization $\triangle$ or not. This query remains unresolved and forms a key area of inquiry in geometric and computational studies.

In the realm of finite element methods, including Discontinuous Galerkin Methods (DG), the concept of Degrees of Freedom (DoF's) is a crucial metric for assessing computational efficiency. The dimension of spline spaces provides a precise and meaningful quantification of these DoF's, thereby offering a valuable tool for evaluating and optimizing computational processes. 

An open research problem is to determine the dimension of the following general spline space:
\begin{eqnarray}
\label{general}
S^{\bf r}_{\bf d}(\triangle) = \{ s:  & s|_{T_i} \in {\bf P}_{d_i}, i=1, \cdots, N, \cr
& s\in C^{r_j}(e_j), j=1, \cdots, M\},
\end{eqnarray}
where ${\bf d}=(d_1, \cdots, d_N)^\top$ is a vector of nonnegative integers 
with $N$ being the number of triangles in $\triangle = \bigcup_{i=1}^N T_i$,  
${\bf r}=(r_1, \cdots, r_M)$ is another vector of integers with $r_j\ge -1$  and 
$M$ being the number of interior edges of $\triangle$. That is, each spline function $s\in S^{\bf r}_{\bf d}(\triangle) $ is a piecewise polynomial function which is of degree $d_i$ over triangle $T_i$ and is 
$C^{r_j}$ across the jth interior edge. The dimension of this very general spline space was discussed by Lai and Schumaker in 
\cite{LS19} a few years ago.

\subsection{Spherical Spline Functions}
In the 1990s, Professor Schumaker, along with his colleagues, spearheaded the development of a novel type of splines on the unit ball's surface, known as spherical splines. These splines are characterized as piecewise spherical harmonics formed over spherical triangulations. This innovation was detailed in a trio of papers by Alfeld, Neamtu, and Schumaker in 1996, referenced as \cite{ANS96a}, \cite{ANS96b}, and \cite{ANS96c}. Subsequent research further explored the approximation properties of spherical splines, as seen in studies such as \cite{FS96}, \cite{BLS06}, \cite{BL11}, \cite{BL18}, among others.

Following the invention of spherical splines and the advancement in understanding their approximation properties, these tools found practical applications in real-world data handling, including interpolation, fitting, and approximation in spherical contexts. A notable application of spherical splines is in reconstructing the geopotential model using satellite measurements around the Earth, as discussed in \cite{LSBW09}.

\begin{example}[Geopotential Reconstruction (cf. \cite{LSBW09}]
A set of  the geopotential measurements from a German satellite was given. 
 See data locations and a triangulation of the Earth  in Figure~\ref{geodata}. 

\begin{figure}[ht]
\begin{tabular}{cc}
\includegraphics[width=0.4\textwidth]{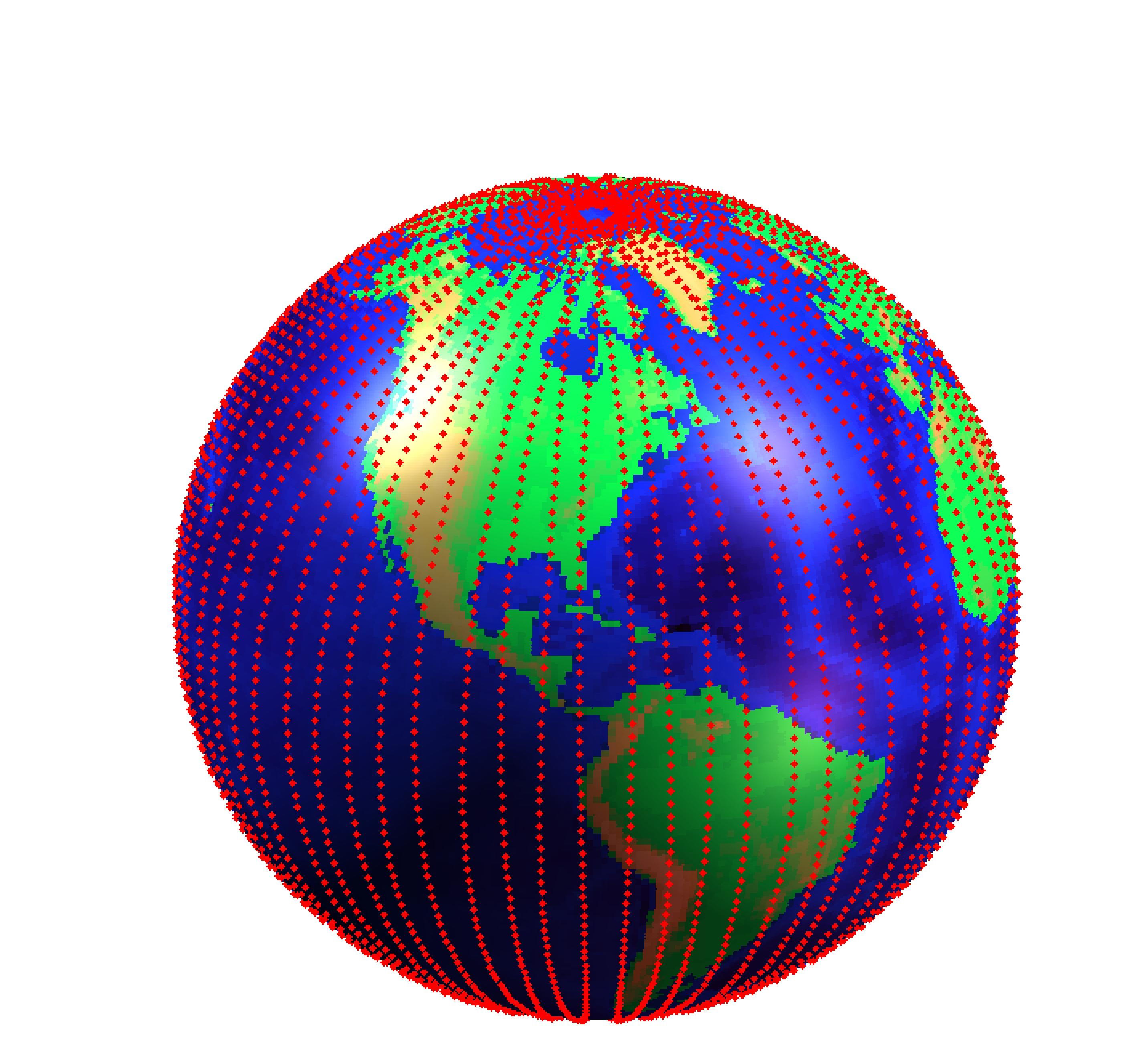}& 
\includegraphics[width=0.35\textwidth]{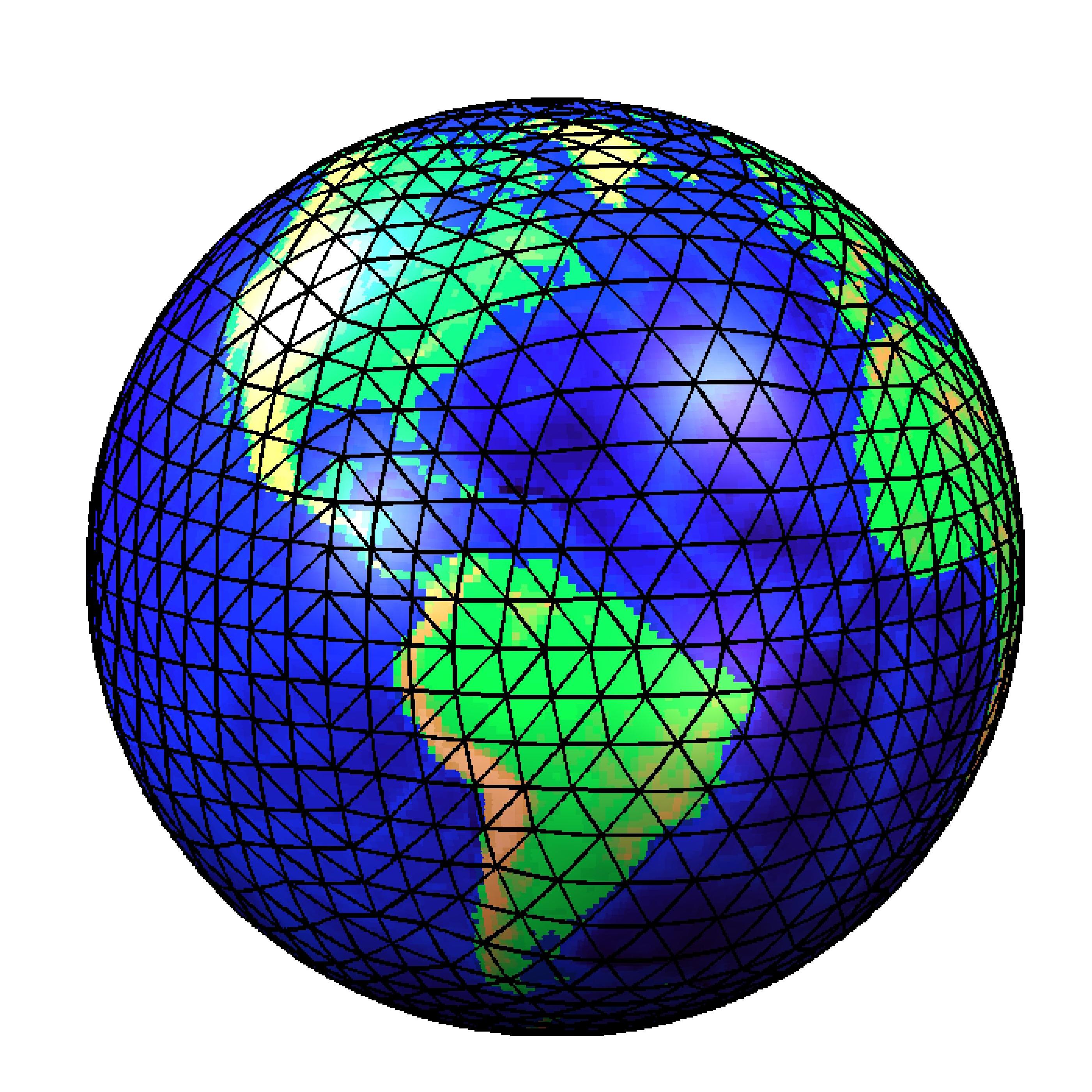}
\end{tabular}
\caption{Geo-potential measurement locations and a triangulation of the earth}
\label{geodata}
\end{figure}

Spherical splines of degree $5$ and smoothness $1$ over the triangulation as shown on the right of 
Figure~\ref{geodata} were used to fit the geopotential values over 
the  data locations as shown on the left of Figure~\ref{geodata} sufficiently 
accurately as demonstrated in Figure~\ref{geofit}. 
\begin{figure}[ht]
\begin{tabular}{cc}
\includegraphics[width=0.4\textwidth]{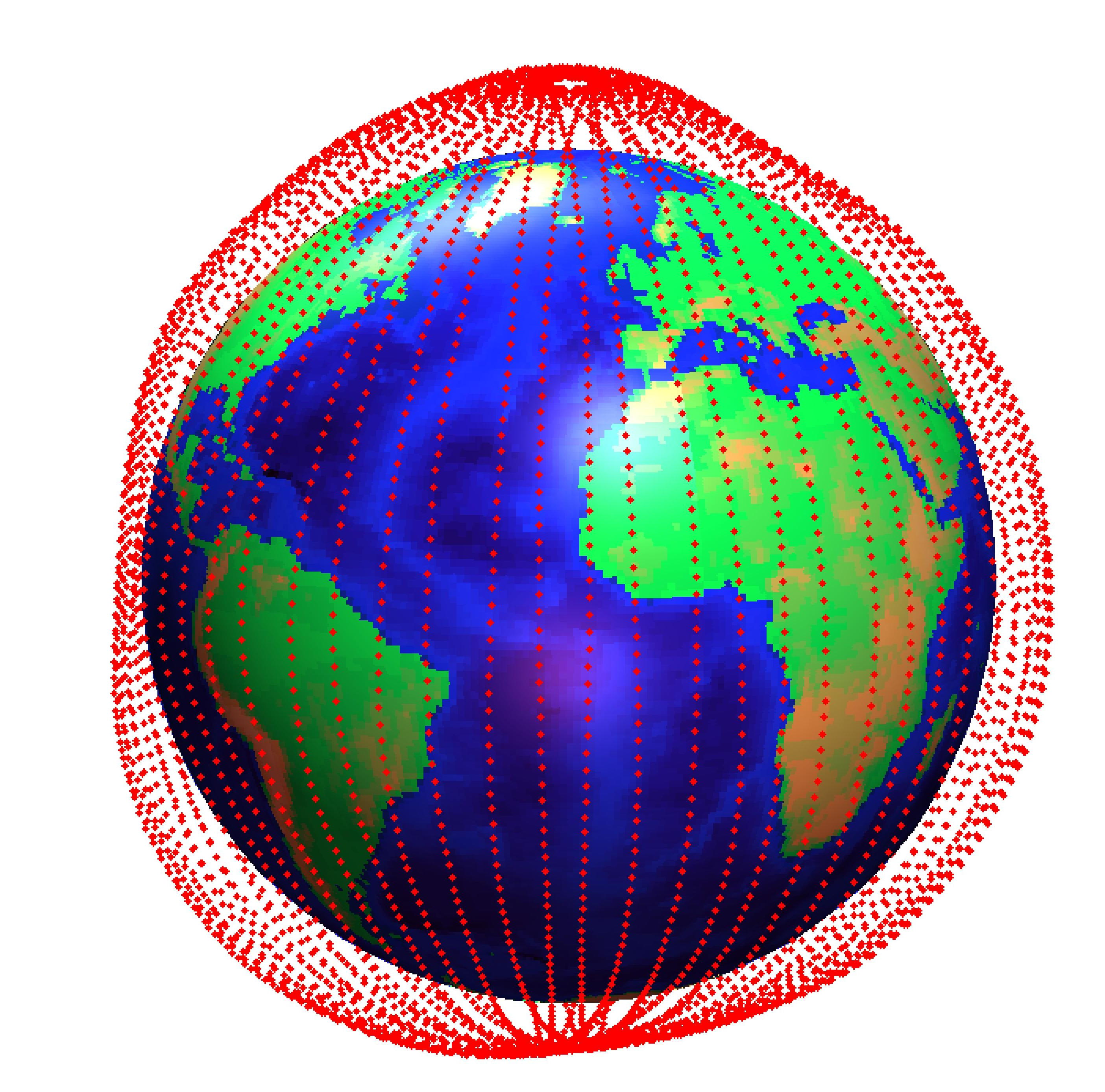}&
\includegraphics[width=0.43\textwidth]{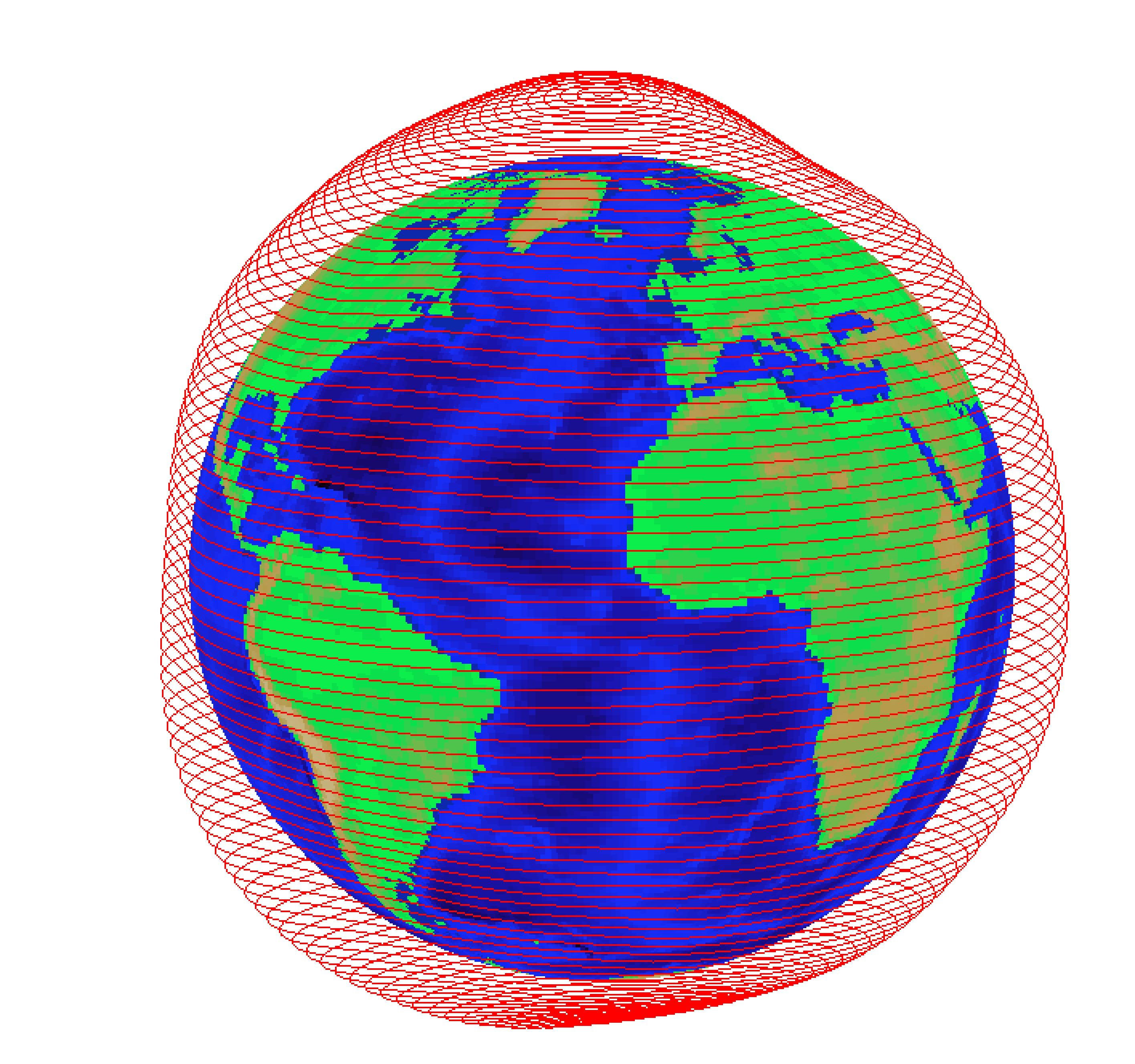}\\
\end{tabular}
\caption{Geopotential data values (left) and spherical spline surface of the data (right) }
\label{geofit} 
\end{figure}

One can see that spherical spline  (the right graph)  fits the given data set (the left graph) reasonably well.   
\end{example}

\subsection{Approximation Order of Spline Spaces}
Throughout the 1990s, Professor Schumaker extensively researched the approximation capabilities of both bivariate and spherical splines. A key outcome of this research was published in \cite{LS98}, a study that stands out in the field and is detailed below. 

\begin{theorem} [Lai and Schumaker 1998\cite{LS98}]
Suppose that $\triangle$ is a $\beta$-quasi-uniform 
triangulation of domain $\Omega\in {\bf R}^2$ 
and suppose that $d\ge 3r+2$.  Fix $0 \le m\le d$. 
Then for any $f$ in a Sobolev space $W^{m+1}_p({\Omega})$, there
exists a quasi-interpolatory spline $Q_f\in S^r_d(\triangle)$ such that 
$$
\|f- Q_f\|_{k,p,\Omega}\le C |\triangle|^{m+1-k}|f|_{d+1,p,\Omega}, 
\forall 0\le k\le m+1,
$$
for a constant $C>0$ independent of $f$, but dependent on $\beta$ and $d$.
\end{theorem}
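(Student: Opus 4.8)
The plan is to build an explicit quasi-interpolation operator $Q : W^{m+1}_p(\Omega) \to S^r_d(\triangle)$ enjoying three properties: (i) it reproduces all polynomials of degree $d$, i.e. $Q_p = p$ for $p \in {\bf P}_d$; (ii) it is \emph{local}, in the sense that $Q_f|_T$ depends only on the behaviour of $f$ on a small neighborhood (star) of the triangle $T$; and (iii) it is \emph{stable}, meaning the relevant Sobolev norms of $Q_f$ over $T$ are controlled by those of $f$ over that neighborhood, with constants depending only on $\beta$ and $d$. Once such a $Q$ is in hand, the error estimate follows from the Bramble--Hilbert lemma applied triangle by triangle together with a scaling argument, so the genuine content lies entirely in the construction of $Q$.

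For that construction I would work in the Bernstein--Bézier (BB) representation, writing each candidate spline on $T$ through its BB coefficients relative to the barycentric coordinates of $T$. The $C^r$ smoothness across each interior edge then becomes a finite collection of linear relations among the BB coefficients of the two triangles sharing that edge. The key structural input is that, when $d \ge 3r+2$, the space $S^r_d(\triangle)$ admits a stable local nodal minimal determining set: a family of local linear functionals (point evaluations and averaged derivatives) whose dual basis $\{\phi_\xi\}$ is local and uniformly stable. I would then set $Q_f := \sum_\xi \lambda_\xi(f)\,\phi_\xi$, where each functional $\lambda_\xi$ is applied not to $f$ directly but to a local degree-$d$ polynomial approximant $p_T$ of $f$ (for instance an averaged Taylor polynomial over the star of $T$). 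Since the $\lambda_\xi$ reproduce polynomials and the $p_T$ reproduce ${\bf P}_d$ exactly, $Q$ reproduces ${\bf P}_d$; since the determining set is local, so is $Q$.

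To obtain the quantitative bound I would fix a triangle $T$, map it to a reference triangle $\hat T$ by an affine change of variables, and track how the Sobolev (semi)norms transform. Quasi-uniformity guarantees that this affine map has condition number controlled by $\beta$, so all scaling constants are uniform in $\triangle$. On $\hat T$, the stability of $\{\phi_\xi\}$ and the boundedness of the averaged-Taylor functionals give the local estimate $\|f - Q_f\|_{k,p,T} \le C\,|T|^{\,m+1-k}\,|f|_{m+1,p,\Omega_T}$, where $\Omega_T$ is the union of triangles in the star of $T$; this is precisely the Bramble--Hilbert estimate, using that $Q$ reproduces ${\bf P}_m \subseteq {\bf P}_d$ for the chosen $0 \le m \le d$. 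Raising to the $p$-th power and summing over all $T \in \triangle$, the bounded-overlap property of the stars (again a consequence of $\beta$-quasi-uniformity) converts $\sum_T |f|^p_{m+1,p,\Omega_T}$ into a uniform multiple of $|f|^p_{m+1,p,\Omega}$, and bounding each $|T|$ by the global mesh size $|\triangle|$ yields the stated global inequality.

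The main obstacle is securing properties (ii)--(iii), that is, producing the stable local nodal determining set and proving that $d \ge 3r+2$ suffices. This is the hard combinatorial-geometric part, because the smoothness relations couple BB coefficients around each interior vertex, and one must show these relations can be solved locally, vertex star by vertex star, without the inverse blowing up as the triangulation degenerates within the $\beta$ constraint. Controlling this requires a careful count of the smoothness conditions at each interior vertex and a verification that the chosen determining set leaves enough free coefficients to absorb them uniformly, which is exactly where the inequality $d \ge 3r+2$ is needed.
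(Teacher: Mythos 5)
The survey you were handed does not actually prove this theorem; it states it as a known result and cites Lai--Schumaker \cite{LS98}, so your attempt has to be measured against the argument in that reference. Your architecture matches it closely: work in the Bernstein--B\'ezier representation, take a stable local (nodal) minimal determining set for $S^r_d(\triangle)$ with dual basis $\{\phi_\xi\}$, define the quasi-interpolant by applying the dual functionals to local polynomial approximants (averaged Taylor polynomials) of $f$, obtain polynomial reproduction and locality, and then run the Bramble--Hilbert lemma with affine scaling on each triangle, summing with the bounded-overlap property of stars. That is exactly the plan of \cite{LS98}. Incidentally, your local estimate carries the seminorm $|f|_{m+1,p,\Omega_T}$, which is the correct right-hand side; the statement as printed in the survey has $|f|_{d+1,p,\Omega}$, which is inconsistent with the hypothesis $f\in W^{m+1}_p(\Omega)$ and is evidently a typo.

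The genuine gap is the one you yourself flag and then set aside: the existence of a \emph{stable} local minimal determining set when $d\ge 3r+2$, with stability constants depending only on $\beta$ and $d$, is not a technical lemma one can defer --- it is the substance of the theorem. Everything else in your outline (averaged Taylor polynomials, scaling, overlap counting) is standard approximation-theory machinery that works for any operator with properties (i)--(iii); the reason the result carries the restriction $d\ge 3r+2$, and the reason it was a publishable theorem, is precisely the construction you omit. In \cite{LS98} this occupies the bulk of the paper: one must exhibit an explicit choice of domain points (a prescribed set around each vertex whose associated coefficients determine, via the $C^r$ smoothness relations, all coefficients in the ring of radius roughly $3r/2$ about that vertex; sets along each edge; and the remaining interior points of each triangle), verify that the smoothness systems around each vertex star are solvable with inverses bounded solely in terms of the smallest angle, and check that $d\ge 3r+2$ guarantees the vertex rings and edge strips do not collide, so the local solves decouple. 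Without carrying this out, your proposal reduces the theorem to its hardest ingredient rather than proving it; as it stands it is a correct roadmap of the known proof, not a proof.
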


This is a smooth version of the well-known Bramble-Hilbert lemma (cf. \cite{BH71}) which found the approximation order of spline space when the smoothness $r=0$.   

Another important work is the approximation order of spherical splines. See the following 
result.  
\begin{theorem}[Neamtu and Schumaker, 2004\cite{NS04}]
Let $d\ge 3r+2$ and $1\le p\le \infty$. Then there exists a constant $C$ dependent only 
on $d, p$ and the smallest angle in spherical triangulation $\triangle$ such that for any  
$f\in W^{m+1}_p(\mathbb{S})$
there exists a spline $Q(f)\in S^r_d(\triangle)$ with 
$$
|f- Q(f)|_{k,q,\mathbb{S}} \le C\,  |\triangle|^{d+1-k}~  |f|_{d+1,p, \mathbb{S}}
$$
for all $0 \le k \le d$ such that $Q(f) \in W^k_p(\mathbb{S})$.
\end{theorem}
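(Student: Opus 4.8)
The plan is to reproduce, in the spherical setting, the quasi-interpolation construction that underlies the preceding bivariate theorem of Lai and Schumaker, transferring each ingredient through the homogeneous Bernstein--B\'ezier calculus of Alfeld, Neamtu, and Schumaker. A \emph{spherical polynomial} of degree $d$ is the restriction to $\mathbb{S}$ of a homogeneous trivariate polynomial of degree $d$, and on a spherical triangle $\tau$ with vertices $v_1,v_2,v_3$ one writes such a polynomial in the homogeneous Bernstein basis $B^d_{ijk}=\frac{d!}{i!\,j!\,k!}\,b_1^i b_2^j b_3^k$, where $(b_1,b_2,b_3)$ are the spherical barycentric coordinates of a point. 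In these coordinates the $C^r$-smoothness relations across an interior edge shared by two spherical triangles take exactly the same linear form in the Bernstein coefficients as in the planar case, so the notions of domain points, rings, disks, and smoothness functionals carry over verbatim. The target is a single bounded linear quasi-interpolation operator $Q$ mapping the Sobolev class of the hypothesis into $S^r_d(\triangle)$, which is local and reproduces every spherical polynomial of degree $d$ on each triangle.

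First I would construct a \emph{stable local minimal determining set} $\Gamma$ for $S^r_d(\triangle)$: a collection of domain points that determines each spline uniquely, chosen so that the Bernstein coefficient at every domain point is a bounded linear combination of the coefficients indexed by points of $\Gamma$ lying in a small, angle-controlled neighborhood. To each $\gamma\in\Gamma$ I attach a nodal functional $\lambda_\gamma$ — a local average or point value of $f$ — set the corresponding coefficient of $Q(f)$ equal to $\lambda_\gamma(f)$, and let the smoothness relations propagate the remaining coefficients. The hypothesis $d\ge 3r+2$ is exactly what makes this possible: it supplies enough interior degrees of freedom per triangle to satisfy all $C^r$ conditions across the (at most three) edges while still leaving a full complement of free coefficients, so that $Q$ reproduces the degree-$d$ spherical polynomials locally and the construction remains stable, with constants controlled by the smallest angle of $\triangle$.

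With $Q$ fixed, the estimate becomes local. On each triangle $\tau$, local polynomial reproduction together with a Bramble--Hilbert argument gives
$$
|f-Q(f)|_{k,p,\tau}\le C\,|\tau|^{d+1-k}\,|f|_{d+1,p,\widetilde{\tau}},
$$
where $\widetilde{\tau}$ is the union of the finitely many triangles touched by the functionals active on $\tau$. To justify this on the curved domain I would flatten $\tau$ by central (gnomonic) projection onto the plane through $v_1,v_2,v_3$, under which the spherical barycentric coordinates pull back to the affine barycentric coordinates of the planar image triangle, reducing the local estimate to the planar Bramble--Hilbert lemma modulo a smooth homogeneity factor and a metric distortion bounded above and below in terms of the smallest angle. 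Raising the local bounds to the $p$-th power, using $|\tau|\le|\triangle|$ with $|\triangle|$ the mesh size, summing over all $\tau\in\triangle$, and invoking the smallest-angle bound to control how many patches $\widetilde{\tau}$ overlap a given triangle then yields the global estimate.

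I expect the principal obstacle to be the simultaneous enforcement of the three competing requirements on $Q$ — global $C^r$ smoothness, locality and boundedness, and full local reproduction of degree $d$ — which is precisely the combinatorial heart of the argument and the reason the inequality $d\ge 3r+2$ cannot be relaxed. A secondary difficulty genuinely specific to the sphere is showing that the flattening projection preserves the relevant Sobolev seminorms up to constants depending only on the smallest angle, so that no curvature term survives in the final approximation order.
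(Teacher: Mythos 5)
The paper you are being checked against contains no proof of this theorem: it is a survey statement quoted from Neamtu and Schumaker \cite{NS04}, so the only meaningful comparison is with that cited source. Your outline is essentially the argument used there (and in the spherical chapters of \cite{LS07}): a quasi-interpolation operator built from a stable local minimal determining set in the homogeneous Bernstein--B\'ezier framework of \cite{ANS96a}, \cite{ANS96b}, with $d\ge 3r+2$ guaranteeing stability and locality, followed by local estimates that are transferred to a plane and then summed over the triangulation. Two points in your sketch need sharpening, and they are related. First, spherical barycentric coordinates do \emph{not} pull back to the affine barycentric coordinates of the projected triangle under central projection; if $v=b_1v_1+b_2v_2+b_3v_3$ on the sphere, the projected point has affine coordinates $b_i/(b_1+b_2+b_3)$, and this normalizing factor is exactly the source of the weight $(1+\|u\|^2)^{-d/2}$ that relates restrictions of homogeneous degree-$d$ polynomials to bivariate polynomials of degree at most $d$ in the plane. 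Second, that weight is not a cosmetic ``homogeneity factor'': the restrictions to the sphere of homogeneous polynomials of degree $d$ contain spherical harmonics only of degrees $d, d-2, d-4,\ldots$, so they do not contain all spherical polynomials of lower degree, and a Bramble--Hilbert argument run directly on the sphere from ``reproduction of spherical polynomials of degree $d$'' would not by itself yield the order $d+1-k$. It is precisely the weighted correspondence with the full bivariate space ${\bf P}_d$ under the projection that restores full polynomial reproduction and legitimizes your local estimate. With those two corrections, your plan is the standard proof; note also that the theorem as printed in the paper carries typographical slips (the unexplained index $q$, and the mixture of $m$ and $d$ between the hypothesis, the exponent, and the seminorm) which any careful write-up should repair rather than inherit.
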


There are many other studies on approximation properties of multivariate splines by Professor Schumaker 
and his collaborators. Notably, the approximation characteristics of discrete least squares in bivariate spline spaces with stable bases are examined in \cite{GS02}. Additionally, the study of minimal energy spline functions and their approximation properties is detailed in \cite{GLS02}. The application of domain decomposition methods in discrete least squares, minimal energy splines, and penalized least squares splines is also a topic of interest, as explored in \cite{LS09}. Furthermore, a comprehensive overview of fundamental concepts in bivariate, trivariate, and spherical splines is presented in the monograph \cite{LS07}, which is illustrated in Figure~\ref{book}.
\begin{figure}[htpb]
\centering
\includegraphics[width = 0.8\textwidth]{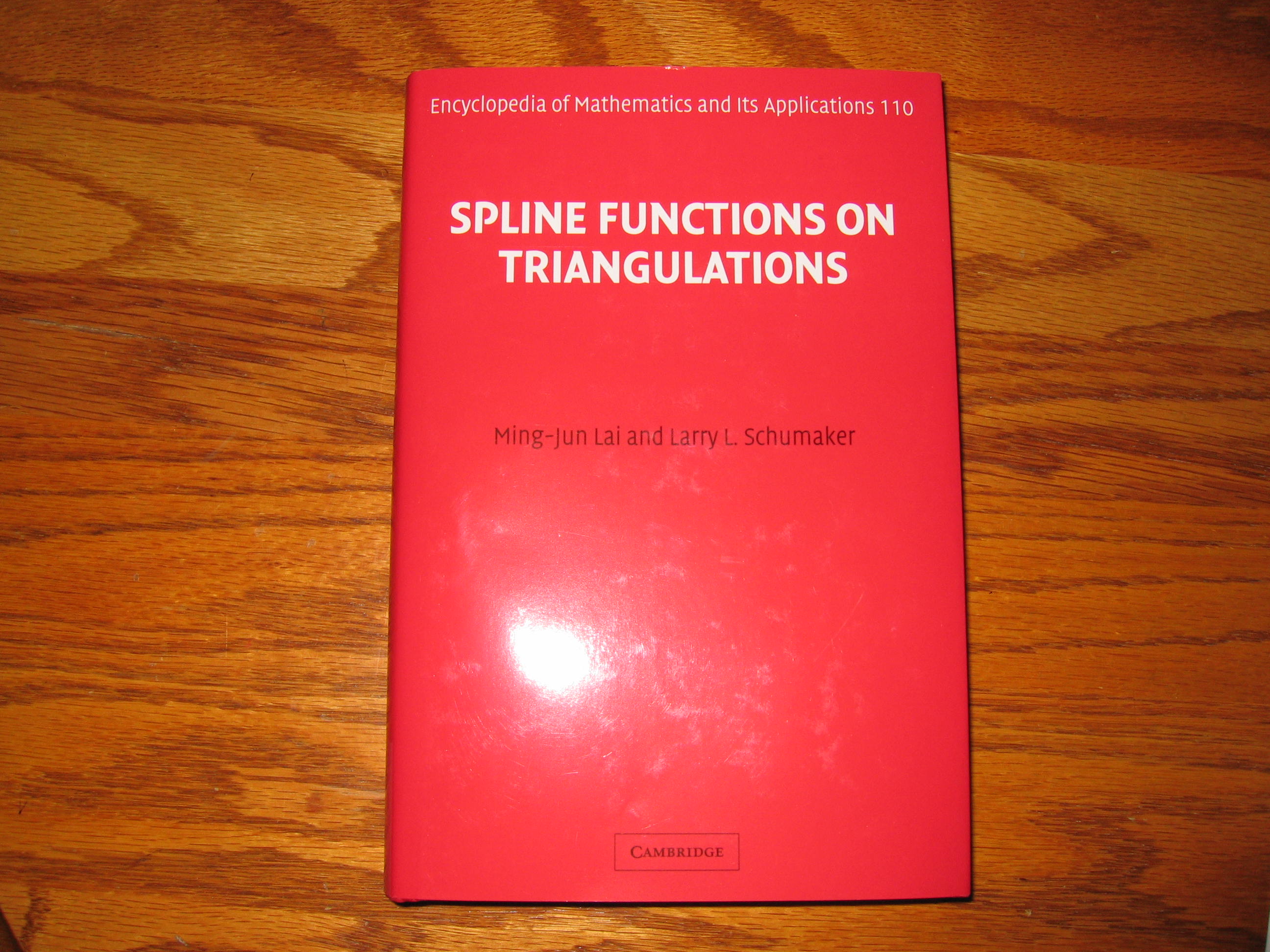}
\caption{A monograph on multivariate splines} 
\label{book}. 
\end{figure}

A persisting challenge in the field lies in determining the approximation order of trivariate spline spaces within the $L^p$ norm range of $1 \leq p < \infty$. While initial investigations into the approximation order in the $L^\infty$ norm were conducted, as outlined in \cite{L89}, these studies primarily focused on cases where the degree $d$ is at least $6r+3$. However, there is limited understanding regarding the approximation order for degrees $d < 6r+3$ over special tetrahedral partitions, as far as the author is aware.

\section{Computational Methods for Multivariate Splines}
One of computational methods for multivariate splines  
is  the classic method of using basis functions like finite elements
 to form a spline subspace and construct a linear approximation of the given data or 
the solution of a given partial differential equation. 
 The various applications and nuances of this method have been compiled in the recent monograph \cite{S15} by Professor Schumaker. However, this approach encounters a significant challenge: the creation of smooth spline basis functions with $C^2$ or higher continuity (e.g., $C^r$ for $r \geq 3$) proves to be exceedingly complex. In fact, constructing $C^r$ spline basis functions typically necessitates using a spline degree of $d \geq 3r + 2$ over a general triangulation in $\mathbb{R}^2$. For $C^2$ spline basis functions, a minimum degree of $d = 8$ is required for $r = 2$. Such high degrees introduce a large number of coefficients (degrees of freedom) to determine, presenting significant challenges in their implementation and the resolution of these unknown degrees of freedom.

An alternative computational method, discussed in \cite{ALW06}, proposes using the coefficient vector of each discontinuous spline function of degree $d$ over a triangulation $\triangle$. This method integrates smoothness conditions as constraints in a minimization strategy, employed for tasks like data fitting, data interpolation, or the numerical solution of PDEs. Here, the minimization process is utilized to ascertain the unknown degrees of freedom specific to each problem. The procedural steps for this method are outlined below.

\begin{itemize}
\item One starts with a discontinuous spline space $S^{-1}_d(\triangle)$ with a triangulation $\triangle$ of the domain $\Omega$ of interest.  

\item Let ${\bf c}= (c_{T_1}, c_{T_2}, \cdots, c_{T_n})^\top$ be the 
representation of a spline function in  $S^{-1}_d(\triangle)$, where $c_{T_i}$ is  the 
coefficients of polynomial in B-form (cf. \cite{LS07}) 
over triangle $T_i\in \triangle, i=1, \cdots, n$.

\item 
Since the smooth conditions across each interior edge of $\triangle$ are linear conditions in terms of ${\bf c}$, 
one puts all the linear smoothness conditions over all interior edges together to form a matrix 
$H$. So $H{\bf c}=0$ if and only if the spline 
function with coefficient vector ${\bf c}$ is in $S^r_d(\triangle)$. 

\item Write $I {\bf c}={\bf f}$ to be the interpolating conditions  and/or $B{\bf c}= 
{\bf g}$ to   the boundary conditions if solving a boundary value problem of PDE.  
One adds all the constraints in, i.e.  $I {\bf c}={\bf f}$,  $H{\bf c}=0$, 
and $B{\bf c}= 
{\bf g}$ and then  solves a 
constrained minimization problem. 

\item The constrained minimization problem can be solved by using an iterative algorithm 
described in \cite{ALW06}.
\end{itemize} 

Note that the iterative algorithm only needs a few, say 3 or less iterations.  
 This computational method allows one to use smooth 
spline functions easily. Therefore, multivariate splines of arbitrary degree, 
arbitrary smoothness over arbitrary triangulation or tetrahedralization or spherical 
triangulation can be used for any applications. Some of applications will be 
discussed in the remaining part of this paper.

\section{Solutions of Linear Partial Differential Equations}
For convenience, how to solve the elliptic equations using multivariate splines will be
explained. 
Consider the Poisson equation over a bounded domain $\Omega$ 
in $\mathbb{R}^d$ for $d=2$ or $d=3$:
\begin{align}
-\Delta u &= f, \quad ~in~ \Omega \subset \mathbb{R}^d\\
u&=g,\quad ~on~ \partial \Omega,
\end{align}
where $\Delta=\frac{\partial^2}{\partial x^2}+\frac{\partial^2}{\partial y^2}$ or 
$\Delta=\frac{\partial^2}{\partial x^2}+\frac{\partial^2}{\partial 
y^2}+\frac{\partial^2}{\partial x^2}$. 

In general,  consider  second order elliptic PDE in non-divergence form:
\begin{equation}
\label{GPDE2}
\left\{
\begin{array}{cl} \sum_{i,j=1}^d a^{ij}(x)\frac{\partial}{\partial x_i}\frac{\partial}{\partial 
x_j}u+\sum_{i=1}^d b^{i}(x) \frac{\partial}{\partial x_i}u+c(x)u&= f, \quad x \in 
\Omega, \cr 
u&=g,  \quad \hbox{ on } \partial \Omega,
\end{array}\right. 
\end{equation}
where the coefficient functions $a^{ij}(x),b^i(x),c(x), i, j=1, \cdots, d$ 
are in $L^\infty(\Omega)$ and satisfy the elliptic condition.
A multivariate spline based collocation method is introduced in \cite{LL22}.  
For a given triangulation $\triangle$, one chooses a set of domain points 
$\{ \xi_i\}_{i=1,\cdots, N}$ over $\triangle$ as collocation points 
and finds the coefficient vector $\textbf{c}$ of spline function $\displaystyle s =\sum_{t\in 
\triangle}\sum_{|\alpha|=D}c^t_{\alpha} \mathcal{B}^t_\alpha$
 satisfying the following equation at those points
\begin{equation}
\label{PDE2}
\left\{
\begin{array}{cl}-\Delta  s(\xi_i) &= f(\xi_i), \quad \xi_i \in \Omega\subset \mathbb{R}^2 \cr 
s(\xi_i) &=g(\xi_i),  \quad \hbox{ on } \partial \Omega,
\end{array}
\right.
\end{equation}
where $\{ \xi_i=(x_i,y_i) \}_{i=1,\cdots, N} \in \mathcal{D}_{D',\triangle}$  are the domain 
points of $\triangle$ of degree $D'$.  Note that $D'$ may not be equal to $D$. 

Using these points, one has the following matrix equation:
$$
-K\textbf{c}:=\begin{bmatrix}-\Delta \mathcal{B}^t_\alpha (x_i,y_i) \end{bmatrix}
\textbf{c}=[f(x_i, y_i)]=\textbf{f},
$$
where $\textbf{c}$ is the vector consisting of all spline coefficients $c^t_\alpha, 
|\alpha|=D, t\in \triangle$. For a general second order elliptic PDE, 
the formula will be more complicated, but the ideas are the same.

In general, the spline $s$ with coefficients in $\bf{c}$ is a discontinuous function.  In order 
to make $s\in \mathcal{S}^r_D$, its coefficient vector $\textbf{c}$ must satisfy 
the constraints $H\textbf{c}=0$ for the smoothness conditions that the $\mathcal{S}$ 
functions  possess. 
Our spline collocation method is to find the minimizer ${\bf c}^*$ by solving the following 
constrained minimization: 
\begin{align}
\label{min1}
\min_{\bf c} J(c)&=\frac{1}{2}(\|-K{\bf c}- {\bf f}\|^2)\\ &\text{subject to } B \textbf{c} = 
\textbf{G}, H \textbf{c} = \textbf{0},
\end{align}
where $B, {\bf G}$ are from the boundary condition and $H$ is from the smoothness condition. $K$ 
may not be invertible. 

Based on spline approximation theorem, one can show that a neighborhood of  $-K \textbf{c}= 
{\bf f}$, i.e. 
\begin{equation}\label{neighbor}
\mathbb{N}_\epsilon = \{\textbf{c}: ||-K \textbf{c}-\textbf{f}||\le 
\epsilon, ||H\textbf{c}||\le \epsilon ,||B\textbf{c}- \mathbf{G}||\le \epsilon \}
\end{equation}
is not empty. Therefore, the minimization problem will have a  solution. Since the minimizing 
functional is strictly convex, the solution is unique.  

To show the multivariate spline collocation method works for  the 3D Poisson equation., 
 consider the  following domains of interest in Figure~\ref{domains}.  In Table~\ref{Times},
 the computational times for generating all necessary matrices, i.e. $K, B, H$ are given. 
\begin{figure}
  \includegraphics[width=.30\linewidth]{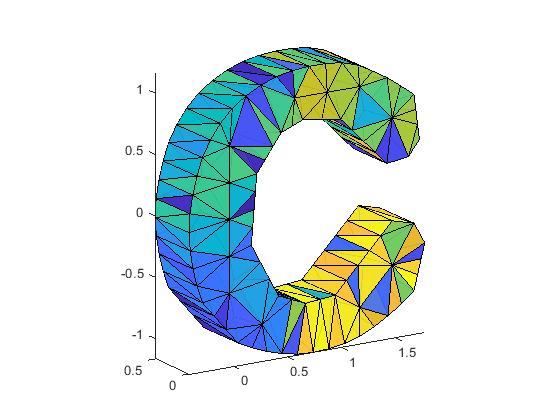}
  \includegraphics[width=.30\linewidth]{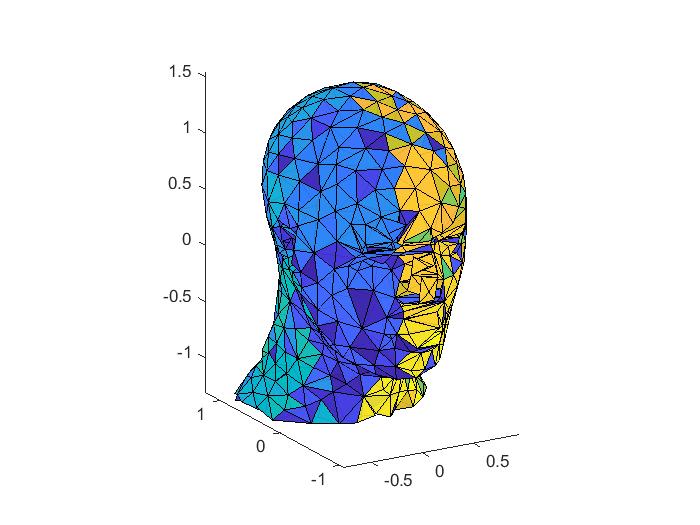}
  \includegraphics[width=.30\linewidth]{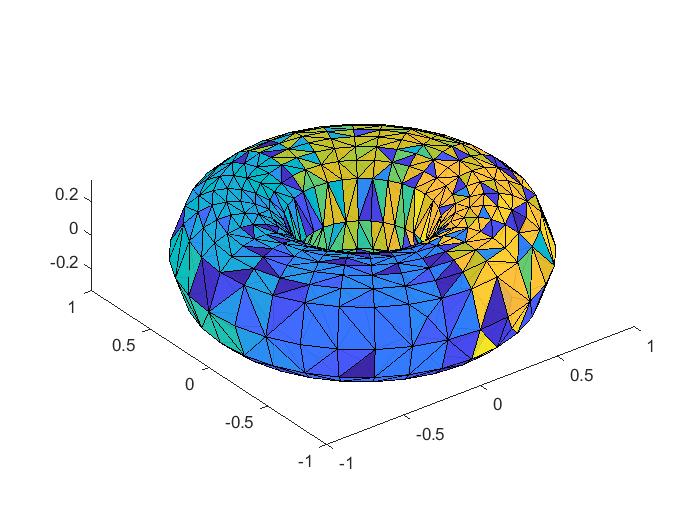}
  \caption{Three computational domains in $\mathbb{R}^3$ }\label{domains}
\end{figure}

\begin{table}
\centering
\begin{tabular}{ c |c c c c  } 
\hline
 Domain& Number of  & Number of  &CPU time \\
      &vertices& tetrahedron  &(seconds)\\
\hline
Letter C& 190& 431 & 11.70  \cr   
  Human head& 913&1588 & 44.9  \cr  
 Torus& 773&2911 &  442   \cr  
\hline
\end{tabular}
\caption{Times in seconds for generating necessary matrices using 48 processors with $D=9$ for each 3D domain in Figure
\label{domains}}
\label{Times}
\end{table}


The following 10 testing smooth and non-smooth solutions are 
used to test the spline solution of  the 3D Poisson equation:
\begin{eqnarray*} 
u^{3ds1}&=&\sin(2x+2y)\tanh(\frac{xz}{2})\cr 
u^{3ds2}&=& e^{\frac{x^2+y^2+z^2}{2}}\cr  
u^{3ds3}&=& \cos(xyz)+\cos(\pi(x^2+y^2+z^2))\cr
u^{3ds4}&=&\frac{1}{1+x^2+y^2+z^2}\cr
u^{3ds5}&=& sin(\pi(x^2+y^2+z^2))+1\cr
u^{3ds6}&=& 10e^{-x^2-y^2-z^2}\cr 
u^{3ds7}&=& \sin(2\pi x)\sin(2\pi y)\sin(2\pi z)\cr
u^{3ds8}&=& z\tanh((-\sin(x)+y^2))\cr 
u^{3dns1}&=& |x^2+y^2+z^2|^{0.8}\cr 
u^{3dns2}&=& (xe^{1-|x|}-x)(ye^{1-|y|}-y)(ze^{1-|z|}-z).
\end{eqnarray*}

3D Numerical Results are given in Table~\ref{Poisson3D1}. 
    
\begin{table}[thpb]
\centering
\begin{tabular}{ c |c c |c c| c c } 
\hline
\multicolumn{1}{c|}{} &\multicolumn{2}{c|}{C shaped domain}&\multicolumn{2}{c|}{Human head}&
\multicolumn{2}{c}{Torus}  \\
\hline
Solution &RMSE& $\ell_\infty$ error&RMSE&  $\ell_\infty$ error&RMSE&  $\ell_\infty$ error\\ 
 \hline
$u^{3ds1}$&3.15e-11 & 9.69e-11 & 5.83e-12 &6.45e-11&1.79e-10 & 2.04e-09    \cr  
$u^{3ds2}$&8.21e-10 & 2.15e-09 & 3.45e-10 &2.95e-09&1.14e-08 & 8.50e-08   \cr  
$u^{3ds3}$&7.33e-10 & 2.37e-09 & 7.26e-10 &8.21e-09&5.34e-09 & 3.31e-08  \cr  
$u^{3ds4}$&3.89e-10 & 1.06e-09 & 2.68e-10 &2.76e-09&3.57e-09 & 2.29e-08   \cr  
$u^{3ds5}$&1.02e-09 & 2.88e-09 & 9.75e-10 &5.78e-09&1.33e-08 & 8.95e-08   \cr  
$u^{3ds6}$&3.86e-09 & 1.10e-08 & 2.35e-09 &2.47e-08&3.39e-08 & 1.90e-07   \cr  
$u^{3ds7}$&1.76e-09 & 1.49e-08 & 4.19e-08 &5.21e-07&1.01e-07 & 2.34e-06   \cr  
$u^{3ds8}$&5.89e-11 & 1.94e-10 & 2.69e-11 &1.66e-10&6.42e-10 & 4.32e-09    \cr  
$u^{3dns1}$&1.15e-06 & 9.60e-05 & 3.82e-06 &6.23e-04&5.07e-09 & 3.22e-08    \cr  
$u^{3dns2}$&5.49e-06 & 9.37e-05 & 2.30e-04 &4.84e-03&1.09e-04 & 1.58e-03   \cr  
\hline
\end{tabular}
\caption{The  root mean square error(RMSE) and maximum errors 
of spline solutions for the 3D Poisson equation 
over the three domains when  $r=1$ and $D=9$. These errors are computed based on $501^3$ equally-spaecd points which fall into the domains of interest.} \label{Poisson3D1}
\end{table}

This approach is also used for numerical solution of nonlinear PDE: Monge Amp\'ere Equation
in the 3D setting (cf. \cite{LL23}). Similar numerical results for general second order elliptic PDEs were obtained. 
Refer to \cite{LL22} for more details.

A convergence result of multivariate spline based collocation method is established 
in \cite{LL22}.  
\begin{theorem}[Lai and Lee, 2022\cite{LL22}]
	\label{mjlai05122021} 
Suppose that $(u-u_s)|_{\partial \Omega}=0$.  
Under the assumption that $\Omega$ has a uniformly positive reach,  we have the following inequalities:  
	\begin{eqnarray*}
		\|u-u_s\|_{L^2(\Omega)} \le C|\triangle|^2 \epsilon_1 \hbox{ and }
		\|\nabla (u-u_s)\|_{L^2(\Omega)} \le C|\triangle| \epsilon_1
	\end{eqnarray*}
	for a positive constant $C>0$, where $|\triangle|$ is the
	size of the underlying triangulation $\triangle$ and $\epsilon_1= \|\Delta u+ f\|_{L^2(\Omega)}.$ 
\end{theorem}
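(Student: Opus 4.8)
The plan is to reduce the claim to an a posteriori estimate driven by the PDE residual of the computed spline and to harvest the two extra powers of $|\triangle|$ from a negative-norm smallness of that residual. Write $R:=\Delta u_s+f$ for the residual of the spline solution, so that $\epsilon_1=\|R\|_{L^2(\Omega)}$, and set $e:=u-u_s$. Since $-\Delta u=f$ and, by hypothesis, $e|_{\partial\Omega}=0$, the error solves the Poisson problem $-\Delta e=R$ in $\Omega$ with $e\in H^1_0(\Omega)$. A bare elliptic estimate would only yield $\|e\|_{H^2}\le C\|R\|_{L^2}=C\epsilon_1$, with no gain in $|\triangle|$; the mesh factors must therefore come from the fact that the collocation conditions force $R$ to vanish at the domain points $\{\xi_i\}$, which makes $R$ small in negative Sobolev norms. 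Concretely, I would aim to prove the two representations $\|\nabla e\|_{L^2}\le C\|R\|_{H^{-1}}$ and $\|e\|_{L^2}\le C\|R\|_{H^{-2}}$, and then the negative-norm bounds $\|R\|_{H^{-k}(\Omega)}\le C|\triangle|^k\epsilon_1$ for $k=1,2$.

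First I would establish the two regularity representations. Testing the weak form of $-\Delta e=R$ against $e$ and using the Poincar\'e inequality gives $\|\nabla e\|_{L^2}^2=\int_\Omega Re\le\|R\|_{H^{-1}}\|\nabla e\|_{L^2}$, hence $\|\nabla e\|_{L^2}\le C\|R\|_{H^{-1}}$. For the $L^2$ bound I would run the Aubin--Nitsche duality argument: let $w$ solve the dual problem $-\Delta w=e$ in $\Omega$, $w|_{\partial\Omega}=0$, so that $\|e\|_{L^2}^2=\int_\Omega Rw\le\|R\|_{H^{-2}}\|w\|_{H^2}$. This is exactly where the hypothesis that $\Omega$ has uniformly positive reach enters: it is invoked to guarantee the $H^2$ a priori estimate $\|w\|_{H^2(\Omega)}\le C\|e\|_{L^2(\Omega)}$ on a possibly non-convex, curved domain, which then yields $\|e\|_{L^2}\le C\|R\|_{H^{-2}}$.

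It remains to prove the negative-norm bounds, and this is the heart of the argument. For $v\in H^k(\Omega)$ with $k\in\{1,2\}$ I would split $\int_\Omega Rv=\int_\Omega R(v-\Pi v)+\int_\Omega R\,\Pi v$, where $\Pi v$ is a piecewise polynomial of degree $k-1$ subordinate to $\triangle$. The first term is handled by a Bramble--Hilbert / quasi-interpolation estimate in the spirit of the Lai--Schumaker approximation theorem quoted above, giving $\|v-\Pi v\|_{L^2}\le C|\triangle|^k|v|_{H^k}$ and hence $\int_\Omega R(v-\Pi v)\le C|\triangle|^k\|R\|_{L^2}|v|_{H^k}$. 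For the second term I would exploit the defining least-squares collocation conditions in (\ref{min1}): they render $R$ (discretely at the $\{\xi_i\}$, hence nearly in $L^2$) orthogonal to the relevant space of low-degree piecewise polynomials containing the pieces $\Pi v$, so that $\int_\Omega R\,\Pi v$ vanishes up to a consistency error of the same order. Taking the supremum over $\|v\|_{H^k}=1$ then gives $\|R\|_{H^{-k}}\le C|\triangle|^k\epsilon_1$, and combining with the previous paragraph completes the proof.

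The main obstacle will be making this second step rigorous and uniform. Turning the discrete collocation conditions at the domain points into the continuous near-orthogonality $\int_\Omega R\,\Pi v\approx0$ requires a quadrature on $\{\xi_i\}$ that is exact for the relevant products together with a careful accounting of the non-polynomial part $f$, and the Bramble--Hilbert constants must be controlled uniformly on the boundary-strip triangles whose geometry is distorted by the curved $\partial\Omega$. Thus positive reach does double duty here -- supplying both the global $H^2$ elliptic regularity used in the duality step and the uniform approximation constants near the boundary -- and reconciling these two uses over a single triangulation that only approximately fills $\Omega$ is where I expect the real work to lie.
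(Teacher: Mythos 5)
You should know at the outset that this survey does not actually prove Theorem~\ref{mjlai05122021}; it is quoted from \cite{LL22} without proof, so I am judging your proposal on its own terms and against the known ingredients of that argument. Your reductions are fine: the energy identity gives $\|\nabla e\|_{L^2}\le \|R\|_{H^{-1}}$, Aubin--Nitsche duality gives $\|e\|_{L^2}\le C\|R\|_{H^{-2}}$, and you correctly identify the sole role of the uniformly-positive-reach hypothesis as supplying the $H^2$ a priori bound $\|w\|_{H^2(\Omega)}\le C\|e\|_{L^2(\Omega)}$ for the dual problem on a possibly non-convex domain; this is precisely the Gao--Lai regularity theorem \cite{GL20} that \cite{LL22} invokes. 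Your reading of the typo $\epsilon_1=\|\Delta u+f\|_{L^2}$ as $\|\Delta u_s+f\|_{L^2}$ is also the right one.

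The genuine gap is your central claim that the collocation conditions yield $\|R\|_{H^{-k}(\Omega)}\le C|\triangle|^{k}\|R\|_{L^2(\Omega)}$, $k=1,2$. This is not just ``where the real work lies''; as a general principle it is false, so no amount of care with quadrature or boundary triangles can rescue the argument in the form you propose. Vanishing at the collocation points is a constraint on a finite, measure-zero set and cannot depress negative Sobolev norms relative to $\|R\|_{L^2}$: let $R_h$ equal $1$ on $\Omega$ except for smooth dips to $0$ in disks of radius $|\triangle|^{10}$ centered at the domain points $\{\xi_i\}$. Then $R_h$ vanishes at every collocation point and $\|R_h\|_{L^2}\to |\Omega|^{1/2}$, yet $R_h\to 1$ in $L^2$, hence $\|R_h\|_{H^{-1}}\to \|1\|_{H^{-1}(\Omega)}>0$, while your claimed bound would force $\|R_h\|_{H^{-1}}\le C|\triangle|\to 0$. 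Any negative-norm gain must therefore use quantitative smoothness of $R=\Delta u_s+f$ (a modulus of continuity or derivative bounds, hence hypotheses on $f$ plus inverse estimates on the spline $u_s$), and once those enter, the estimate is no longer of the stated form $C|\triangle|^k\epsilon_1$. Two further obstructions sit in the same step: the least-squares minimizer of (\ref{min1}) does not annihilate $R$ at the $\xi_i$ at all --- its normal equations give only discrete orthogonality of the residual vector to $\{\Delta s(\xi_i)\}$ for feasible spline directions $s$, not to your piecewise polynomials $\Pi v$; and even exact collocation $R(\xi_i)=0$ does not give $\int_T R\,\Pi v\approx 0$, because $R$ is not a polynomial on $T$, so the quadrature-exactness argument again needs derivatives of $R$. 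As written, then, the proposal cannot close: it establishes the theorem only under additional regularity hypotheses and with a different right-hand side than $\epsilon_1$.
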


When $\Omega$ is a convex domain, $\Omega$ has a uniformly positive reach. Also, any star-shaped
domain has a uniformly positive reach. There are many non-convex domains, non-star-shapde 
domains which has a positive reach. We refer to \cite{GL20} for examples.

Advantages of the multivariate spline collocation method are 
\begin{itemize}
\item The spline collocation method in 2D/3D can be easily 
        implemented for various kinds of linear PDEs, e.g. biharmonic equations, Stokes equations, and etc.. 
\item One can easily use splines of high degree and enough smoothness as long as 
the computer memory allows.
\item One can choose collocation points to avoid the discontinuity from the PDE coefficients;
\item  One can increase the number of collocation points to enhance the solutions;
\item It does not need  weak formulation and weak solutions. 
\end{itemize}

Dr. J. Lee has experimented with the multivariate spline based collocation method extensively. 
See \cite{L23} for numerical results for biharmonic equation, Stokes equations, Keller-Segel 
system of partial differential equations, and etc.. 


\section{Construction of Smooth Curves and Surfaces}
Another interesting application of multivariate splines is to construct smooth curves 
and smooth surfaces. For convenience, one starts with a construction of curves.  
Suppose that one is given a data set (on the left) of Figure~\ref{Cex1}. 

\begin{figure}[htpb]
\centering
\includegraphics[width = 0.3\textwidth]{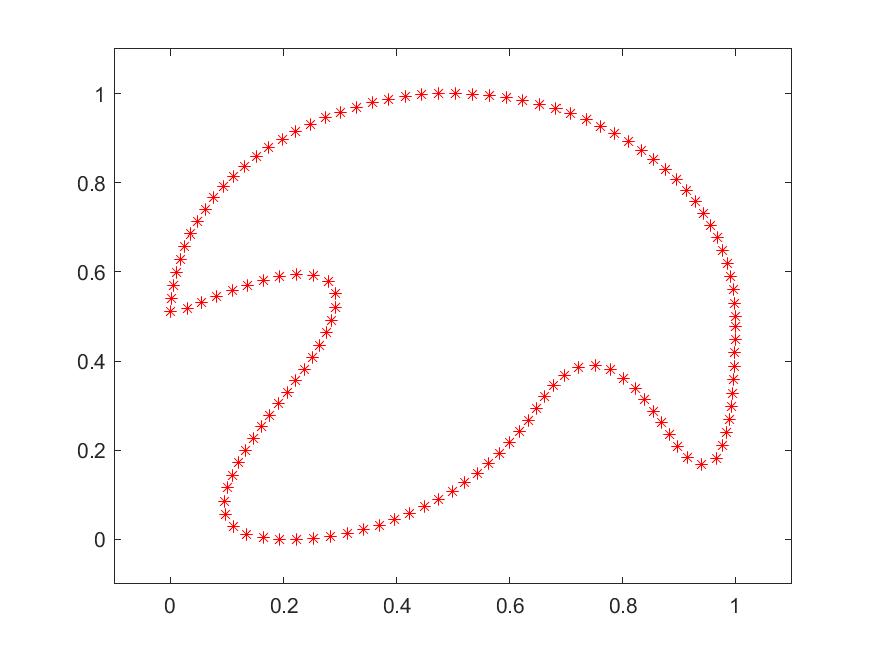}
\includegraphics[width = 0.3\textwidth]{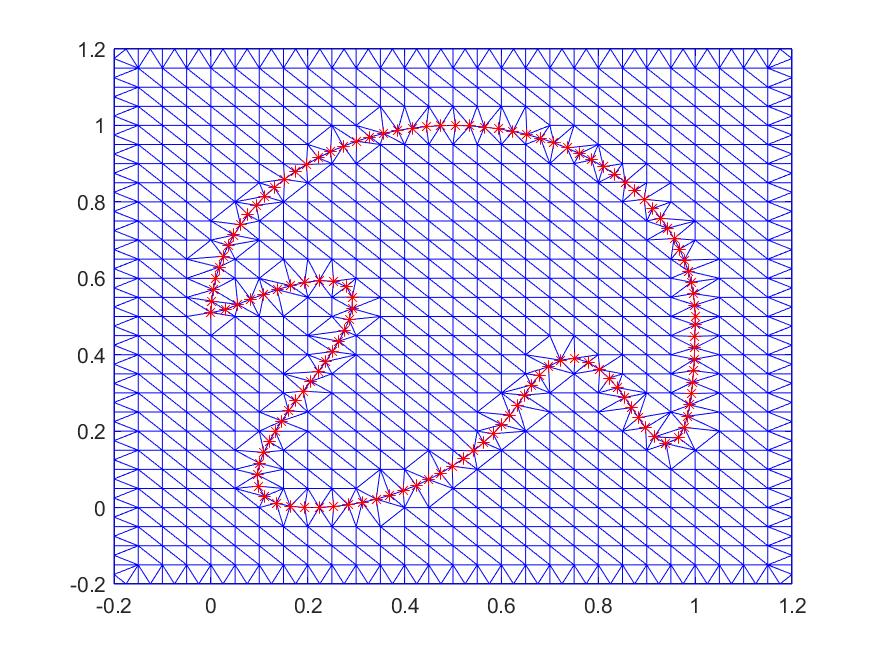}
\includegraphics[width = 0.3\textwidth]{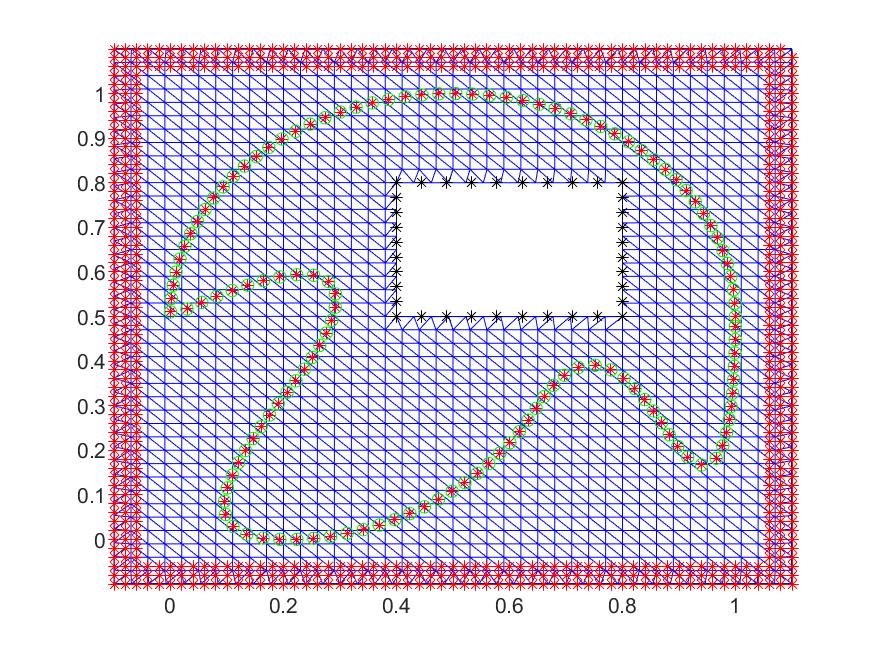}
\caption{A set of point locations (left) (courtesy Larry L. Schumaker),   
a constrained triangulation (middle) with piecewise linear interpolation as a part of edges 
and a triangulation with a hole (right) \label{Cex1}}
\end{figure}  

One approach is to construct an interpolatory spline solving the following 
minimization problem as discussed in \cite{DHLMX19}:
\begin{eqnarray}
\label{minenergy}
\min_{s\in S^r_d(\triangle)} \mathcal{E}(s), 
\hbox{s.t. } &&  s(x_i,y_i)=1, \forall (x_i,y_i)\in \mathcal{D}, \cr 
&&  s(x_j,y_j)=0, \forall (x_j,y_j)\in {\cal B},  \cr 
&&	s(x_k,y_k)=2, \forall (x_k,y_k)\in {\cal C}, 
\end{eqnarray}
where $\mathcal{E}(s)$ is the energy functional 
\begin{equation}
\label{E2}
\mathcal{E}(s) =\int_\Omega |\frac{\partial^2}{\partial x^2} s|^2 + 2
|\frac{\partial^2}{\partial x \partial y} s|^2 + |\frac{\partial^2}{\partial y^2} s|^2 ,
\end{equation}
$\mathcal{D}$ is a given point cloud, $\mathcal{B}$ is the boundary of the rectangular 
domain (in bold red), and 
$\mathcal{C}$ is the boundary of the hole in the middle of triangulation $\triangle$.  
One way to find this minimization is to solve the unconstrained minimization below.
\begin{eqnarray}
\label{minc}
\min_{s\in S^r_d(\triangle)} &&\sum_{(x_i,y_i)\in {\cal D}} 
|s(x_i,y_i)- 1|^2 + \sum_{(x_i,y_i)\in {\cal B}} |s(x_i,y_i)|^2 
	\cr
&& +\sum_{(x_i,y_i)\in {\cal C}}|s(x_i,y_i)-2|^2 + \lambda {\cal E}(s) 
\end{eqnarray}
with an appropriate parameter $\lambda>0$. This is so-called the penalized least squares 
method (cf. e.g. \cite{LS09}). The existence and uniqueness of the minimization 
(\ref{minc}) is well-known. 

\begin{example}[Multiple Curves]
In this example, there are multiple curves to describe a cartoon panda as shown in 
Figure~\ref{panda}. The method described 
in this section found all curves at once. 
\begin{figure}[htpb]
\centering
\includegraphics[width = 0.4\textwidth]{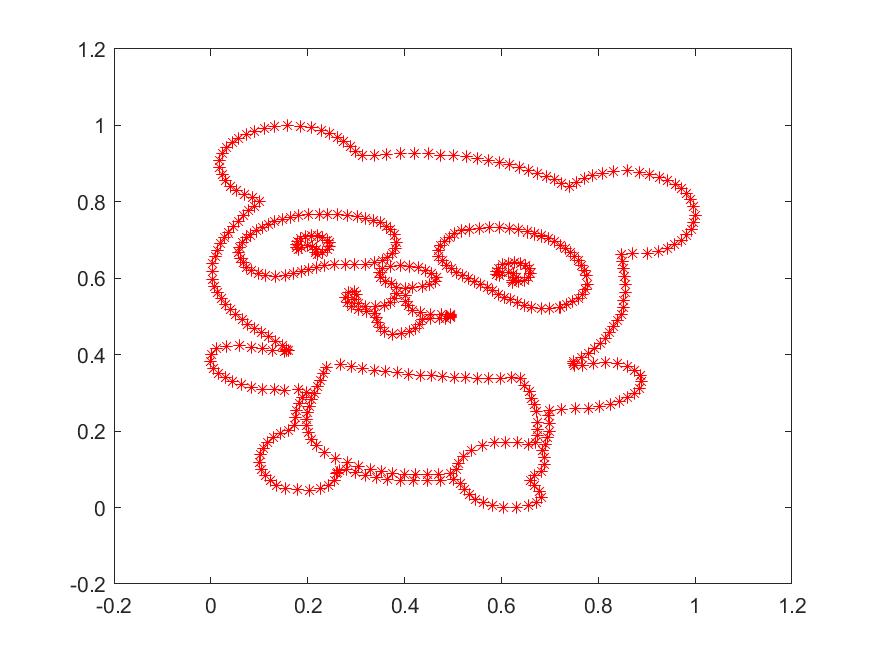}
\includegraphics[width = 0.4\textwidth]{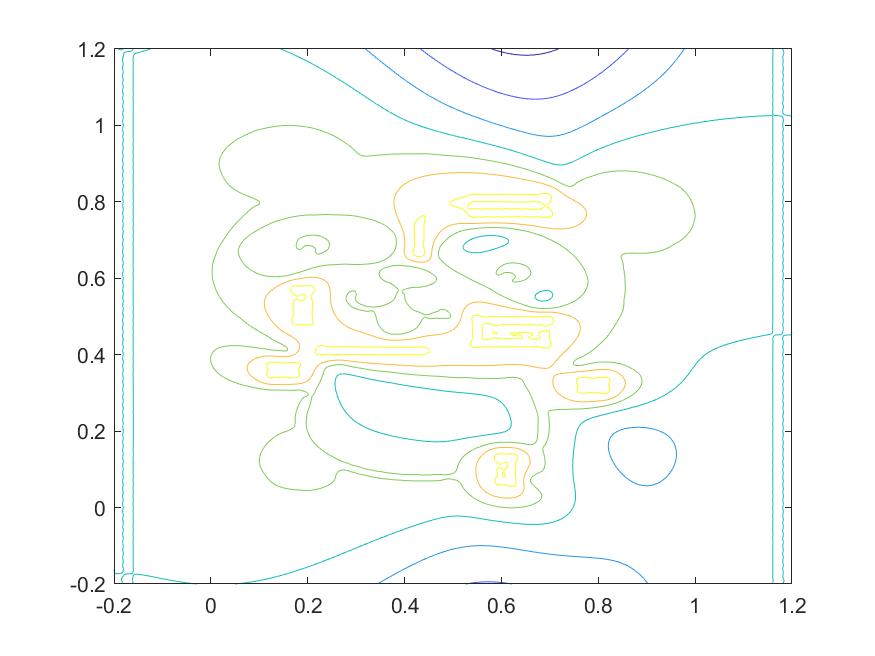}
\includegraphics[width = 0.4\textwidth]{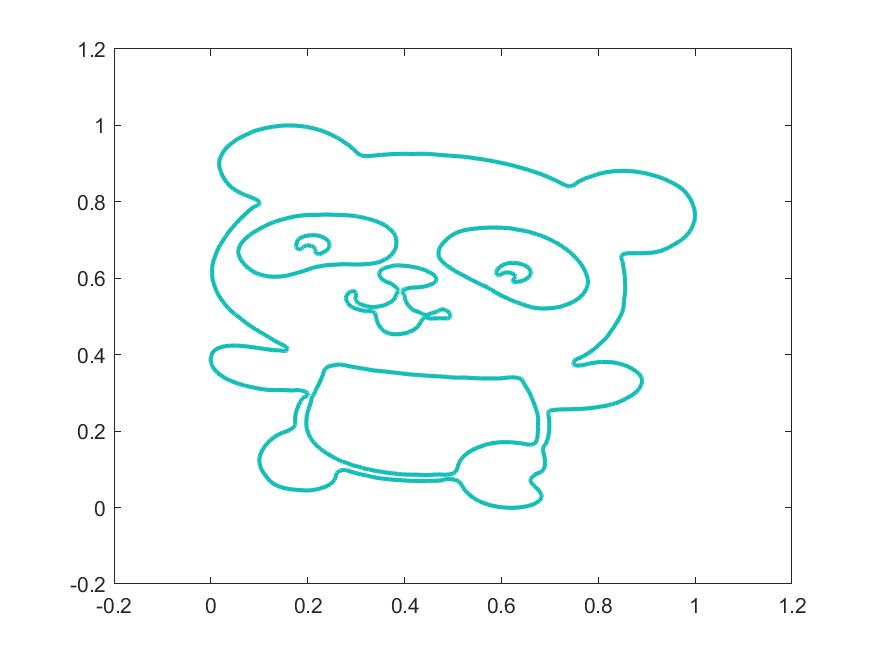}
\caption{A set of point locations (left),    
a contour plot (middle) of the spline minimizer of (\ref{minc})
and an interpolatory spline curve (right) \label{panda}}
\end{figure} 
\end{example}

\begin{example}[Curves with Corners]
Next consider a curve with multiple corners. One can choose a hole at the corner of the 
intended curve. Then the method in this section found a desired curve with corners.  See
\cite{X19} for detail.  
\begin{figure}[htpb]
\centering
\includegraphics[width = 0.4\textwidth]{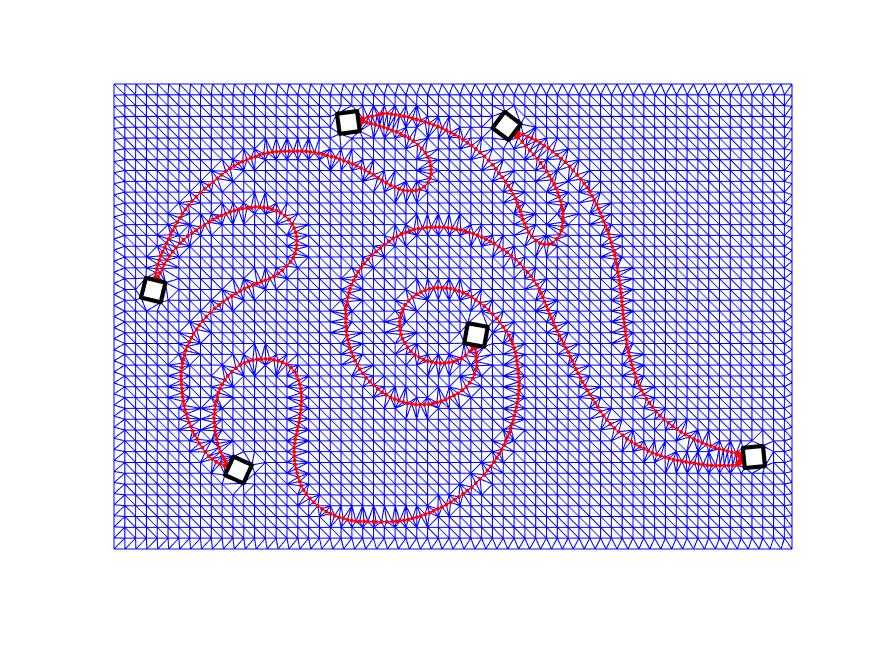} 
\includegraphics[width = 0.4\textwidth]{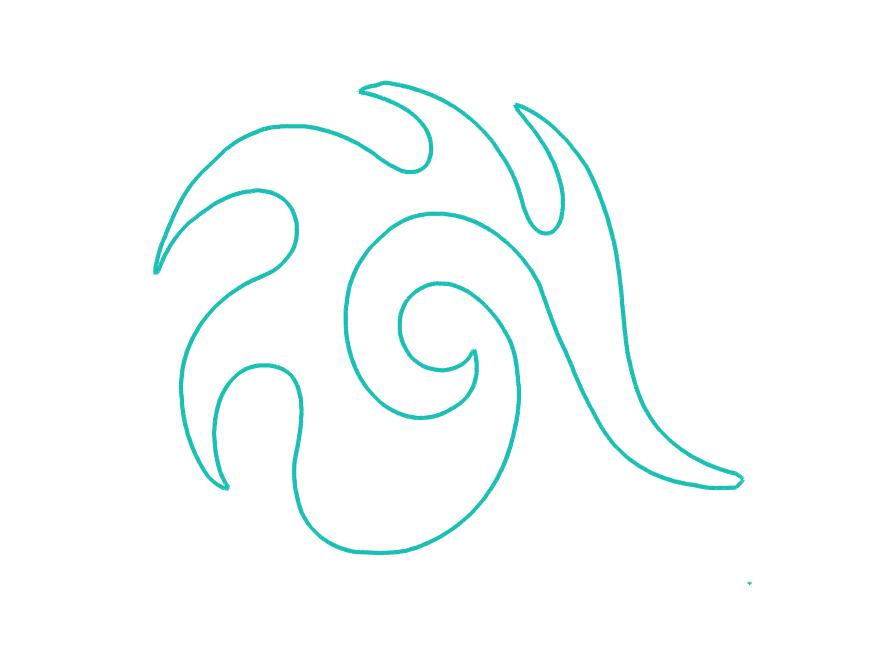} 
\caption{A constrained triangulation with holes at tips (left) and the level curve with corners. 
\label{ex7c}}
\end{figure}
\end{example}

\subsection{Trivariate Splines for Medical Data Fitting and Surface Construction}
A data set in $\mathbb{R}^3$ looks like a piece of blood vessel is given 
(courtesy of Bree Ettinger and Laura Sangalli). 

\begin{table}[htbp]
\begin{tabular}{c}
\includegraphics[width = 0.45\textwidth]{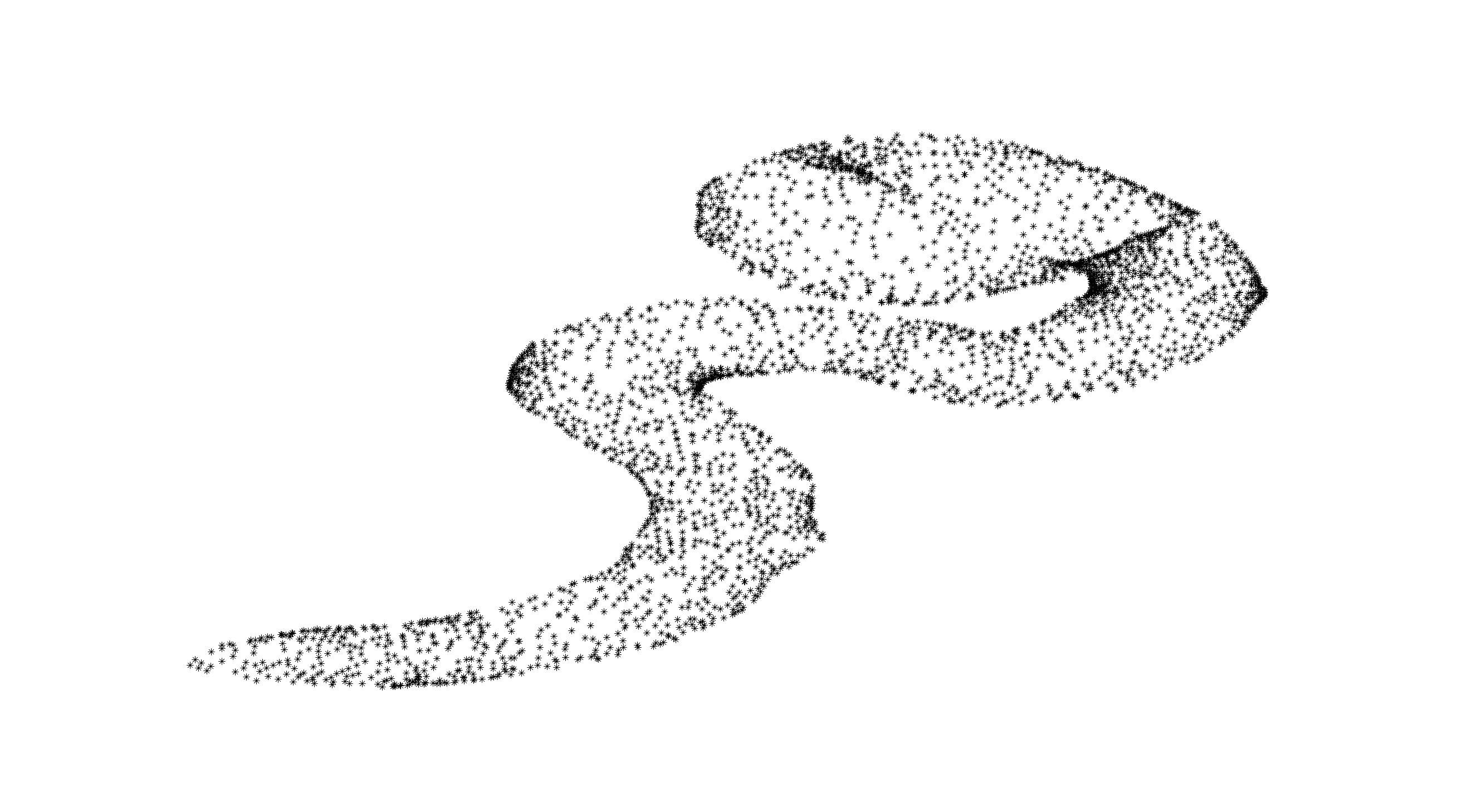} 
\includegraphics[width = 0.45\textwidth]{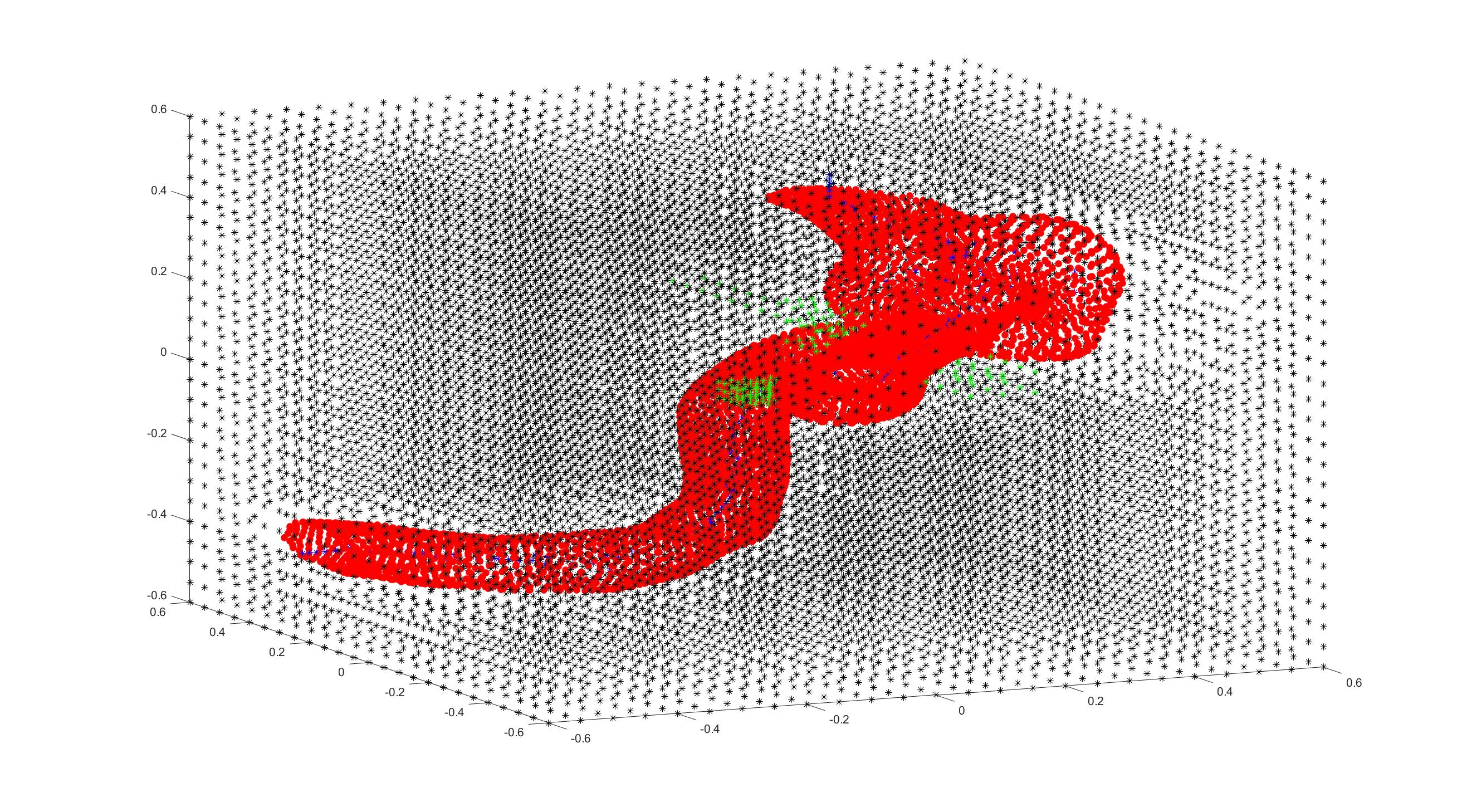}\cr
\end{tabular}
\caption{A data set in two different views}\label{vesseldata}
\end{table}

\begin{table}[htbp]
\begin{tabular}{c}
\includegraphics[width = 0.45\textwidth]{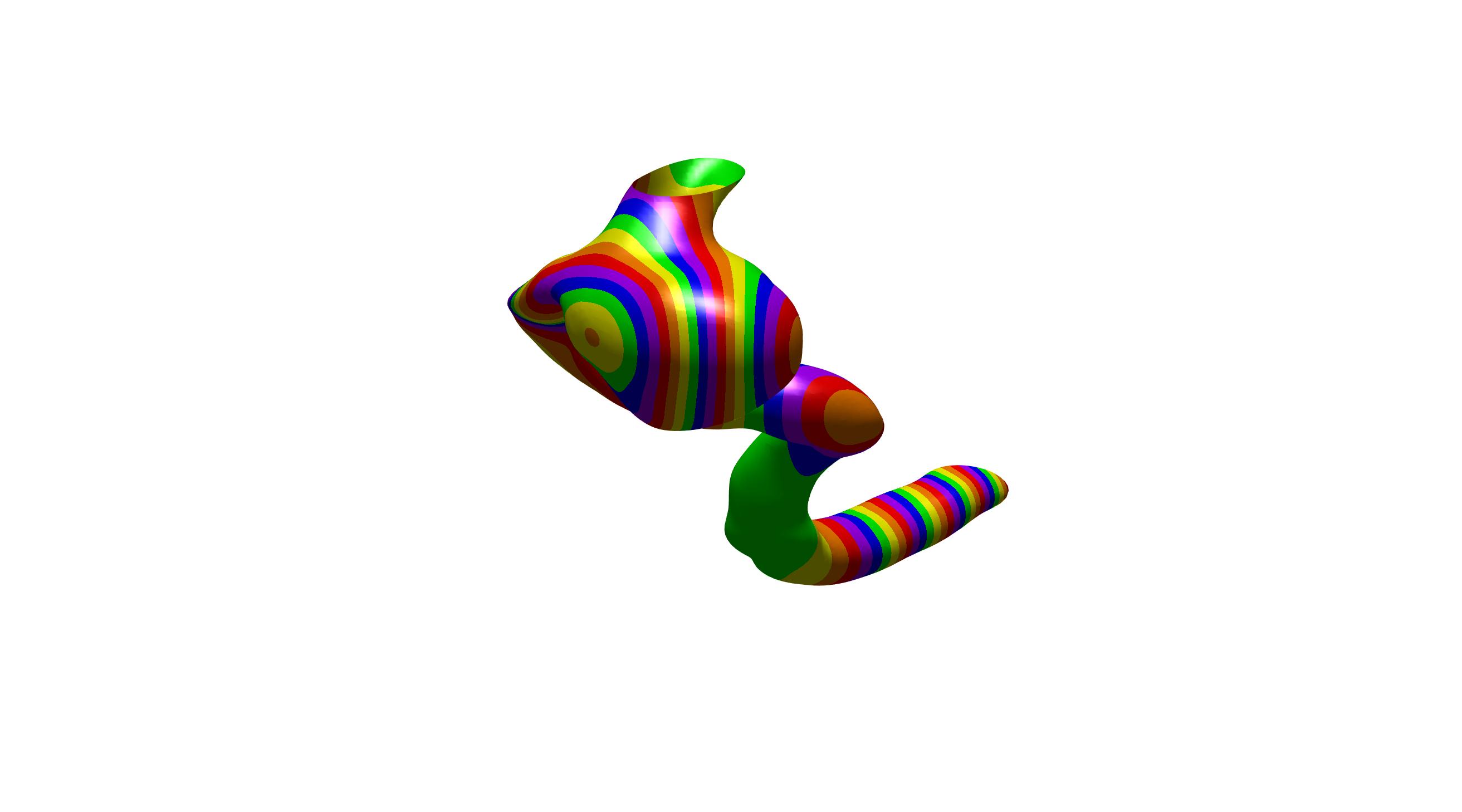} 
\includegraphics[width = 0.45\textwidth]{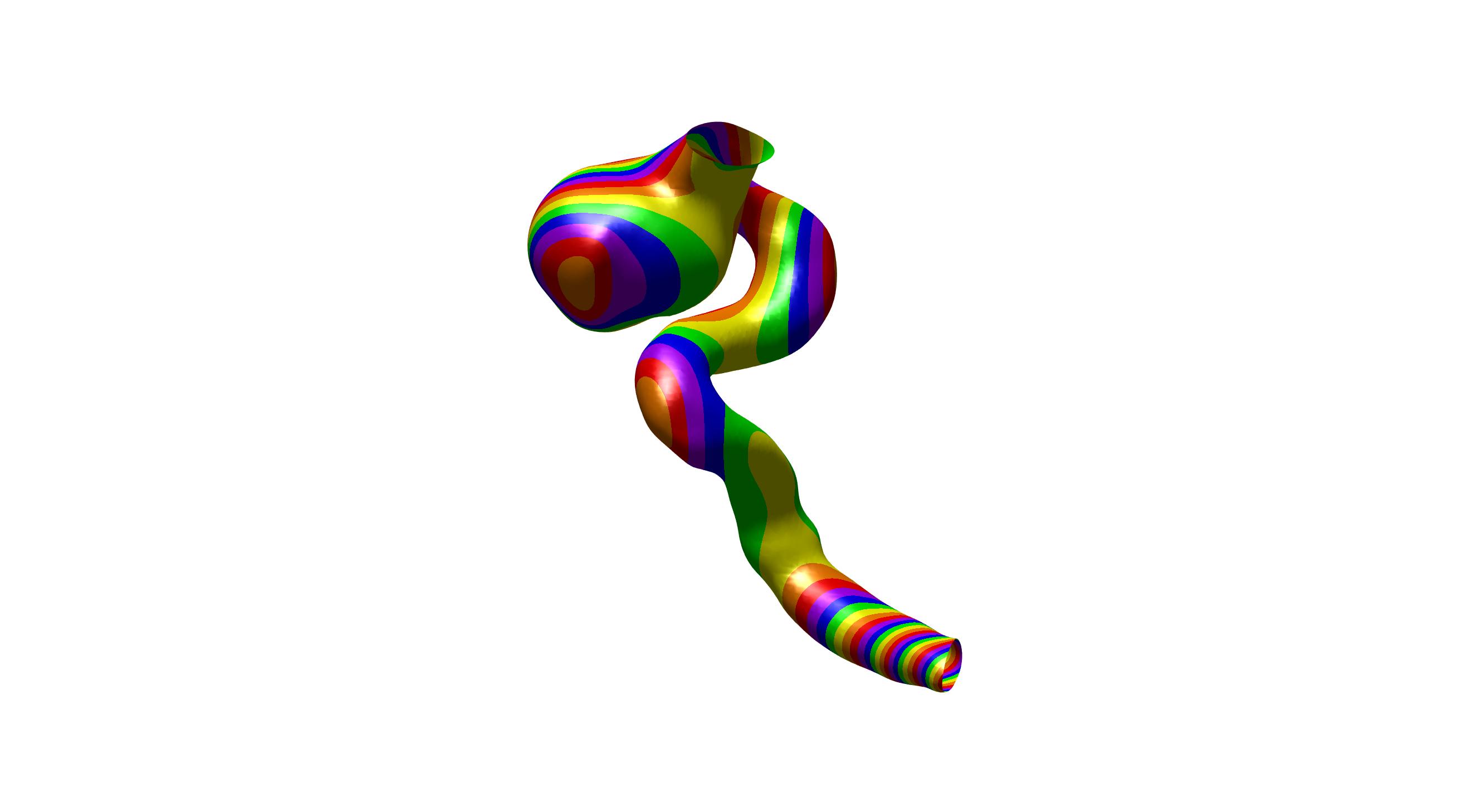}\cr
\end{tabular}
\caption{Two spline surfaces (two different views) of the blood vessel data
in Figure~\ref{vesseldata}}\label{vessel}
\end{table}
In \cite{DHLMX19}, the researchers use the penalized least squares method  to find the 
smooth surface interpolating the data in Figure~\ref{vesseldata} as shown in Figure~\ref{vessel}.  These surfaces are smooth and interpolate the given data nicely.

\begin{example}[Construction of Smooth Surfaces]
Certainly, one can create any data set and uses the penalized least squares method to 
find the desired surface. In Figure~\ref{ex3d}, four surfaces of various genius are shown. They 
are generated by using the method explained above. 
\begin{table}[htpb]
\begin{tabular}{cc}
\includegraphics[width = 0.4\textwidth]{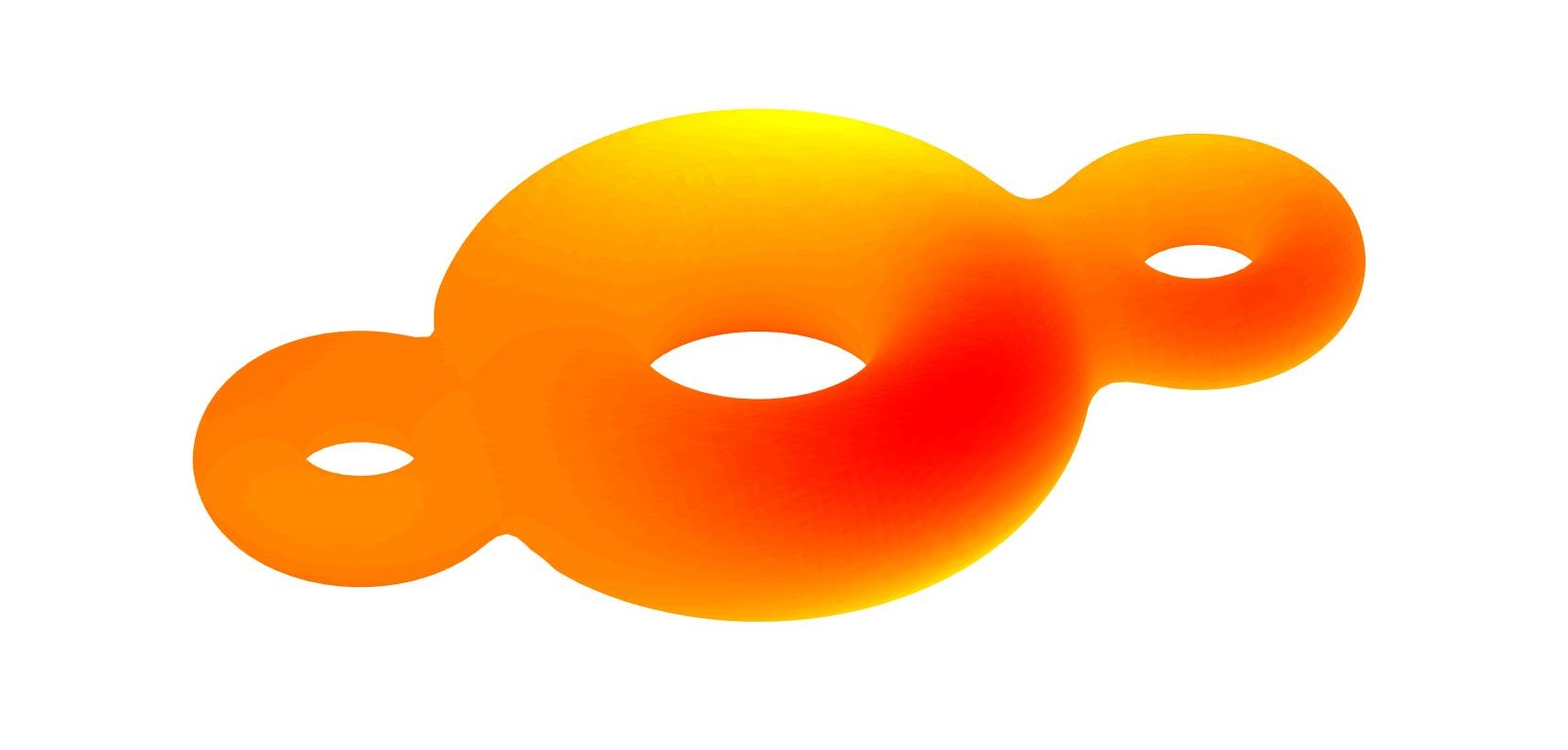} & 
\includegraphics[width = 0.4\textwidth]{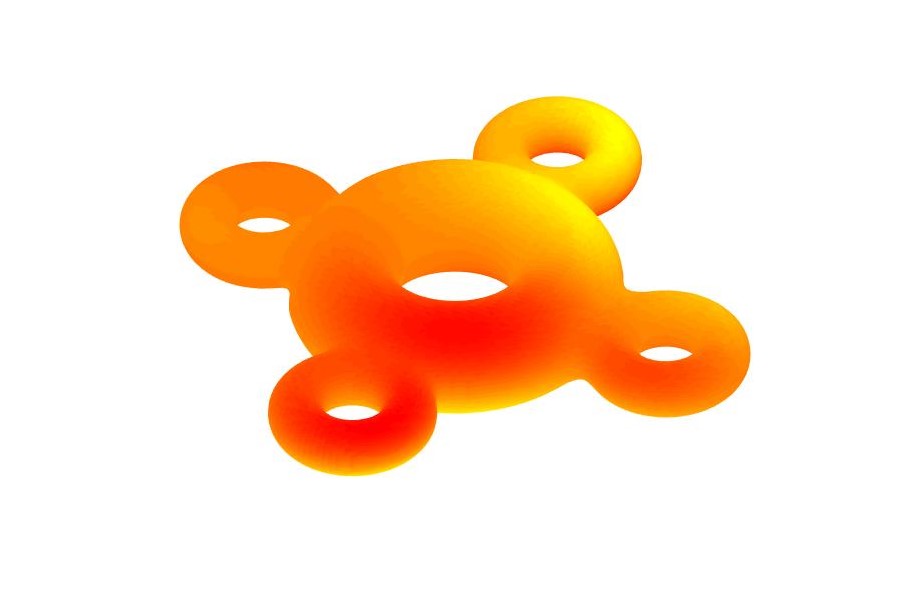} \cr 
\includegraphics[width = 0.4\textwidth]{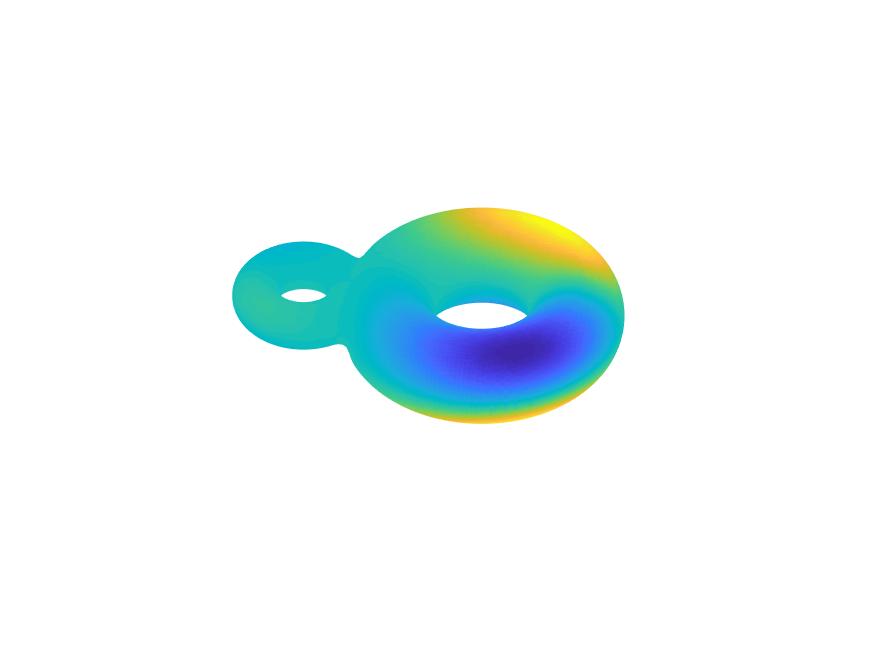} & 
\includegraphics[width = 0.4\textwidth]{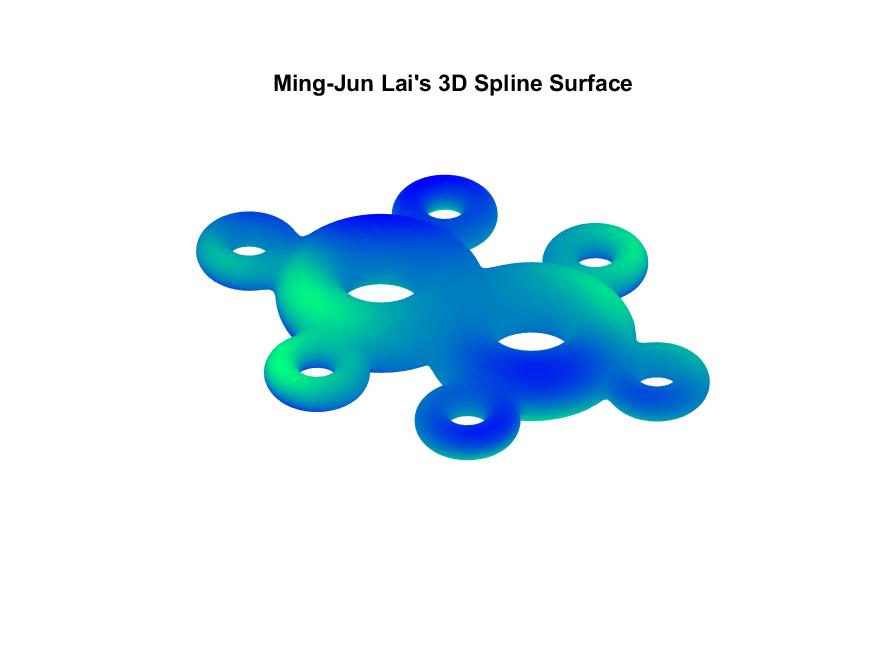}\cr
\end{tabular} 
\caption{Various genus 3  surfaces generated by 3D spline data fitting method} 
\label{ex3d}
\end{table}
The detail can be found \cite{DHLMX19}.  One can even print these surfaces out by using 3D printer. 
See Figure~\ref{ex3dA}.  
\begin{figure}[htpb]
\centering
\includegraphics[width = 0.22\textwidth]{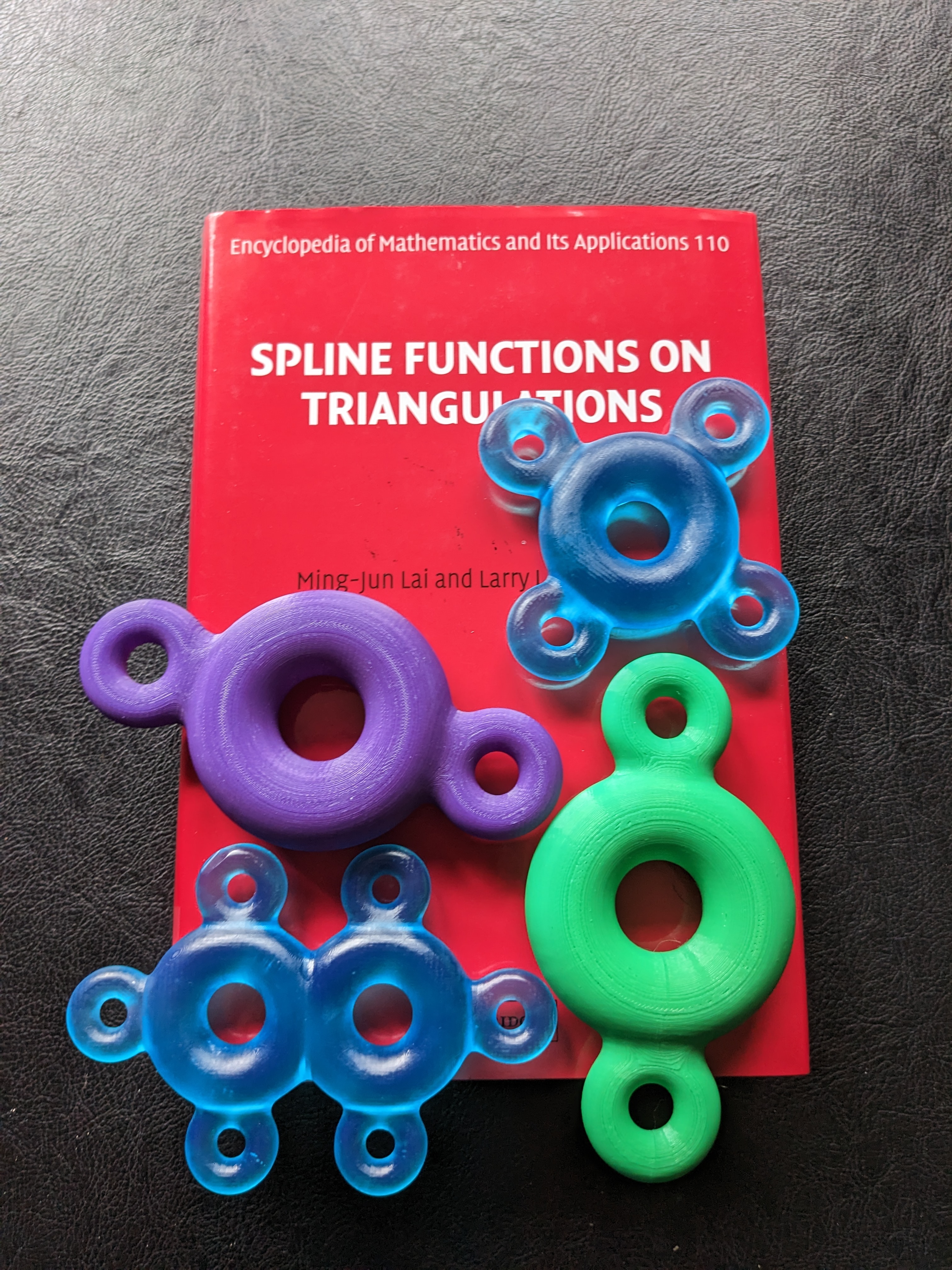} 
\caption{3 D models of the surfaces of various genus generated by 3D spline data fitting method }
\label{ex3dA}
\end{figure}
\end{example}




\section{Numerical Solution of the Monge-Ampere Equation}
The well-known Monge-Ampere Equation arises from the Monge's formulation of 
the optimal transportation problem. 
The well-known  optimal transportation problem can be described simply as follows. 
Assume that $f$ is a given density 
function over a domain $V\subset \mathbb{R}^d$ with $d \ge 1$.  
One has a pile $f$ of sands over a domain 
$V$  and a plan to move it to another location $W \subset \mathbb{R}^d$ with density function $g$.
Certainly, one assume that the volume $\int_V f(\bfx)d\bfx = \int_W g(\bfy) d\bfy$. 

For example, consider a file of sands forms an image (density) over a square domain $V$ 
which needs to be over a circular domain $W$ as shown in Figure~\ref{harper}.  
\begin{figure}
\centering
  \includegraphics[width=1\linewidth, height=5cm]{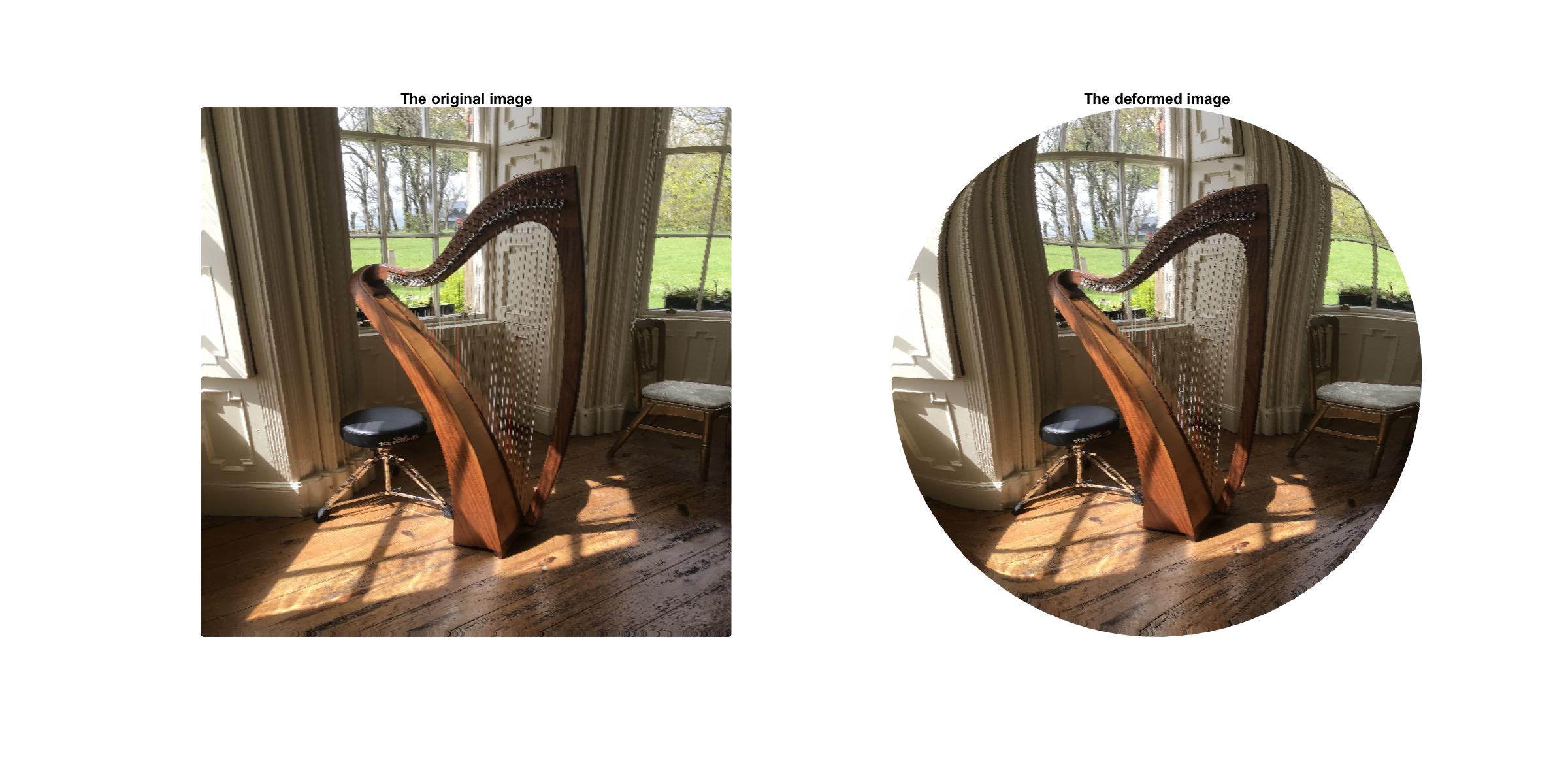}
  \caption{An image over a square domain is transported to a circular domain} \label{harper}
\end{figure}
The problem is to find the optimal way to transport the density image from $V$ (on the left) to 
$W$ (on the right). 

Clearly, moving the sand around needs some effort/energy which is modeled by a cost function 
$c(\bfx, \bfy)$ defined over $V \times W$. 
For convenience,  one uses 
\begin{equation}
\label{standard}
c(\bfx,\bfy)= \frac{1}{2} \|\bfx - \bfy\|^2.
\end{equation}

Let ${\cal T}(V,W)$ be the collection of all push-forwards which maps $f$ over $V$ to $g$ over
$W$. The optimal transport problem  can be recast as follows:
\begin{equation}
\label{Mproblem}
\min_{T\in  {\cal T}(V,W)}\int_V c(\bfx, T(\bfx)) f(\bfx) d\bfx 
\end{equation}
which is called Monge's optimal transportation problem. It was formulated in 1781 by Gasspard 
Monge and became famous since 1885 due to the prize offered by the Academy of Paris 
(cf. \cite{V03}). The problem has been studied for 250 years. One distinct research work is 
the Kantorovich  formulation of the optimal transportation problem based on 
\begin{equation}
\label{L1standard}
c(\bfx,\bfy)= \|\bfx - \bfy\|
\end{equation}
instead of (\ref{standard}). Dr. Leonid Kantorovich  was awarded a Nobel prize in 1978  
for his work in economics around 1945. In particular, his research initialized the study 
of linear programming. 

\subsection{Existence, Uniqueness, and Regularity}
There are many results known on the optimal transportation problem in (\ref{Mproblem}). The 
major result is summarized in the following   
\begin{theorem}[The Brenier Theorem]
\label{BrenierThm1}
Suppose that $f$ is a smooth function over $V$ or 
a positive density function which does not give a mass to small 
sets. There exists a unique push-forward 
$\nabla u$ with a convex function $u$ which is the minimizer of (\ref{Mproblem}) the cost function $c(\bfx,\bfy) = \frac{1}{2} \|\bfx - \bfy\|^2$.  Furthermore, $u$ satisfies a Monge-Amp\'ere equation:
\begin{equation}
\label{MAE}
\det(D^2u(\bfx)) = \frac{f(\bfx)}{g(\nabla u(\bfx))}, \forall \bfx\in V
\end{equation}
with a second boundary condition
\begin{equation}
\label{sbdc}
\nabla u(\bfx)= \bfy \in  W,~ \forall \bfx\in  V.
\end{equation}
\end{theorem}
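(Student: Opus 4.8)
The plan is to follow the classical route through the Kantorovich relaxation and duality, since the Monge problem (\ref{Mproblem}) is difficult to attack directly: the admissible class ${\cal T}(V,W)$ is nonconvex and the minimization may not obviously attain its infimum. First I would relax (\ref{Mproblem}) to the Kantorovich problem, minimizing $\int_{V\times W} c(\bfx,\bfy)\, d\gamma$ over all couplings $\gamma$ on $V\times W$ whose marginals are $f\,d\bfx$ and $g\,d\bfy$. This relaxed problem is a linear program over a weakly compact convex set, so an optimal plan $\gamma^*$ exists by a standard weak-$*$ compactness and lower-semicontinuity argument.

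Next I would invoke Kantorovich duality to produce a pair of potentials $(\varphi,\psi)$ with $\varphi(\bfx)+\psi(\bfy)\le c(\bfx,\bfy)$ everywhere and equality on $\mathrm{supp}(\gamma^*)$. The crucial step is to exploit the special structure of the quadratic cost (\ref{standard}): expanding $c(\bfx,\bfy)=\tfrac12\|\bfx\|^2-\langle\bfx,\bfy\rangle+\tfrac12\|\bfy\|^2$ and absorbing the pure-$\bfx$ and pure-$\bfy$ terms, one rewrites the dual constraint as a Legendre-type inequality. Setting $u(\bfx)=\tfrac12\|\bfx\|^2-\varphi(\bfx)$ then exhibits $u$ as a supremum of affine functions, hence convex, and shows that the optimal plan $\gamma^*$ is supported on the graph of the subdifferential $\partial u$.

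With convexity of $u$ in hand, the existence of a genuine map follows from the differentiability theory of convex functions: $u$ is differentiable Lebesgue-almost everywhere, and the hypothesis that $f$ does not charge sets of measure zero (the ``no mass to small sets'' condition) guarantees that $\partial u$ reduces to the single-valued $\nabla u$ for $f$-almost every $\bfx$. Thus $\gamma^*=(\mathrm{id}\times\nabla u)_\# f$, so $\nabla u$ is an optimal transport map and the Kantorovich and Monge infima coincide; uniqueness of $\nabla u$ (up to $f$-null sets) and, up to an additive constant, of the convex potential $u$ then follows from strict convexity applied to any two competing optimizers.

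Finally I would derive the Monge-Amp\'ere equation (\ref{MAE}). Assuming the regularity needed to apply the change-of-variables formula, the push-forward identity $(\nabla u)_\# f = g$ yields $f(\bfx)=g(\nabla u(\bfx))\,\det(D^2u(\bfx))$ pointwise, which is exactly (\ref{MAE}), while the range condition $\nabla u(V)=W$ gives the second boundary condition (\ref{sbdc}). The hard part will be making this last step rigorous: a priori $u$ is only convex, so $D^2u$ exists only in the Alexandrov almost-everywhere sense, and passing from the measure-theoretic push-forward to the pointwise determinant equation requires either Alexandrov's second-differentiability theorem together with an absolute-continuity argument, or an appeal to Caffarelli's regularity theory to upgrade $u$ to a classical solution under suitable convexity and smoothness assumptions on $V$, $W$, $f$, and $g$.
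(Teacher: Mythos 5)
Your proposal is correct in outline and follows the classical duality proof of Brenier's theorem, but it takes a genuinely different route from the paper. The paper's proof is short and citation-based: it invokes the existence/uniqueness theorem of \cite{V03} to obtain an optimal map $T$ together with cyclical monotonicity, applies Rockafellar's theorem (a cyclically monotone set lies in the subdifferential of a convex function) to conclude $T=\nabla u$ with $u$ convex, and then obtains (\ref{MAE}) by a change of variables in the push-forward identity $\int_W \eta(\bfy)\, g(\bfy)\,d\bfy=\int_V \eta(T(\bfx))\, f(\bfx)\,d\bfx$. You instead construct everything from scratch: Kantorovich relaxation and weak-$*$ compactness for existence of an optimal plan, duality plus the expansion of the quadratic cost (\ref{standard}) to produce the convex potential $u(\bfx)=\frac12\|\bfx\|^2-\varphi(\bfx)$, and a.e.\ differentiability of convex functions combined with the ``no mass to small sets'' hypothesis to collapse the plan onto the graph of $\nabla u$. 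Your route is self-contained and makes visible exactly where each hypothesis of the statement is used; the paper's route buys brevity by outsourcing the hard analysis to \cite{V03}. Both arguments derive (\ref{MAE}) the same way at the end, and there you are actually more careful than the paper: you correctly flag that $D^2u$ exists a priori only in the Alexandrov almost-everywhere sense, so passing to the pointwise equation needs Alexandrov's second-differentiability theorem or Caffarelli's regularity theory, a point the paper passes over silently (its displayed formulas also misplace a parenthesis, writing $g(\nabla u(\bfx)\det(D^2u(\bfx)))$ where $g(\nabla u(\bfx))\det(D^2u(\bfx))$ is meant). One minor vagueness on your side: uniqueness does not follow from ``strict convexity'' of anything in sight, since the Kantorovich functional is linear; the standard argument averages two optimal plans and uses that the average, being optimal, must still be concentrated on the graph of a single gradient. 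This slip does not affect the soundness of your overall plan.
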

\begin{proof}
Based on the existence theorem and uniqueness result in \cite{V03}, there exists an unique 
transportation $T$ and $T$ has the cyclical monotone property.  
According to the Rochafeller Theorem (cf. \cite{V03}), 
$T=\nabla \phi$ for a convex function such that for any measurable function $\eta$ on $W$, 
\begin{equation}
\label{key1}
\int_W \eta(\bfy) g(\bfy)d\bfy = \int_V \eta(T(\bfx)) f(\bfx) d\bfx = \int_V  
\eta(\nabla \phi (\bfx)) f(\bfx ) d \bfx
\end{equation} 
By using the change of variables on the left-hand side, we have
\begin{equation}
\label{key2}
\int_W
\eta(\bfy)g(\bfy)d\bfy =\int_W\eta(\nabla \phi(\bfx))g(\nabla u(\bfx)\det(D^2 u(\bfx)))d\bfx
\end{equation}
which should be the right-hand side of (\ref{key1}) for any measurable function $\eta$. It follows that
\begin{equation}
\label{key3}
f(\bfx) = g(\nabla u(\bfx) \det(D^2 u(\bfx)))
\end{equation}
which is the nonlinear Monge-Amp\'ere equation (\ref{MAE}). 
\end{proof}

There are many classic results on the regularity. The one of most recent works is 
\cite{CLW21}. 
\begin{theorem}
\label{Bound}
Assume that $V$ and $W$
are bounded convex domains in $\mathbb{R}^n$  with $C^{1,1}$ boundary and assume that 
$f \in C^\alpha(\bar{V})$ is positive with $0<\alpha<1$. Let $u$ be a convex solution to (\ref{MAE}) and 
(\ref{sbdc}). Then we have the estimate
\begin{equation}
\|u\|_{C^{2,\alpha}(\bar{V})} \le  C,
\end{equation}
where $C$ is a constant depending only $n, \alpha, f, V, W$. Furthermore, if $\alpha=0$, i.e. $f\in C^0(\bar{V})$, then  
\begin{equation}
\|u\|_{W^{2,p}(V)} \le  C
\end{equation}
for all $p\ge 1$, where $C$ is another constant dependent on  $n, p, f, V, W$.
\end{theorem}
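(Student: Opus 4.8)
The plan is to follow Caffarelli's regularity program for the Monge--Amp\`ere equation, adapted to the second boundary value problem of optimal transport, as carried out in \cite{CLW21}. Existence of a convex Brenier potential $u$ with $\nabla u$ pushing $f$ forward to $g$ is already supplied by Theorem~\ref{BrenierThm1}, so the task reduces to deriving the a priori estimates. The first step is to record that the right-hand side of (\ref{MAE}) is trapped between two positive constants: since $f\in C^\alpha(\bar V)$ is strictly positive on the compact set $\bar V$, and $g$ is a positive density bounded away from $0$ and $\infty$ on the bounded convex set $W$ (in particular if, as the stated dependence of $C$ suggests, $g$ is normalized to a constant determined by $|W|$ and the mass of $f$), there are constants $0<\lambda\le\Lambda$ with $\lambda\le \det(D^2u(\bfx))\le\Lambda$ throughout $V$. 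This places $u$ in the class of Alexandrov solutions with bounded density to which Caffarelli's theory applies.

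The second step is to upgrade $u$ from merely convex to strictly convex and $C^{1,\beta}$. Here the convexity of the target $W$ is essential: because the subgradient image satisfies $\nabla u(V)=W$ and $W$ is convex, every section of $u$ is either compactly contained in $V$ or meets $\partial V$ in a controlled way, which rules out the flat pieces and lower-dimensional singular sets that otherwise destroy regularity. With strict convexity in hand I would invoke Caffarelli's interior estimates: normalize each section by an affine map via John's lemma, compare $u$ on the normalized section with the solution of $\det(D^2w)=\mathrm{const}$, and bootstrap. This yields the interior bound $\|u\|_{C^{2,\alpha}_{\rm loc}(V)}\le C$ when $f\in C^\alpha$, and, when $f$ is merely continuous (the $\alpha=0$ case), the interior $W^{2,p}$ bound for every $p\ge1$.

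The third and decisive step is to promote these interior estimates to global estimates up to $\partial V$. I would localize near an arbitrary boundary point and construct matching upper and lower barriers built from the $C^{1,1}$ regularity of $\partial V$ and $\partial W$ together with the convexity geometry of both domains. These barriers force strict convexity of $u$ up to the boundary and, crucially, establish the obliqueness of the transport condition (\ref{sbdc}), i.e. that $\nabla u$ carries $\partial V$ onto $\partial W$ non-tangentially. Obliqueness is precisely what licenses the boundary versions of Caffarelli's Schauder and $W^{2,p}$ estimates, delivering $\|u\|_{C^{2,\alpha}(\bar V)}\le C$ and, in the continuous case, $\|u\|_{W^{2,p}(V)}\le C$.

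I expect the boundary analysis of the third step to be the main obstacle. Interior regularity is by now a well-oiled machine, but controlling $u$ near $\partial V$ requires delicate barrier constructions that depend sensitively on the joint geometry of $V$ and $W$; ruling out boundary singularities and proving obliqueness is exactly where the $C^{1,1}$ smoothness and convexity hypotheses are consumed, and where the constant $C$ acquires its dependence on $V$ and $W$. The dependence of $C$ on $f$ enters through its Hölder norm and its positive lower bound, both of which feed the stability estimates comparing $u$ with the model solutions.
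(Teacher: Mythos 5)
The first thing to say is that the paper does not actually prove this theorem: it is quoted as a known result from \cite{CLW21} (Chen--Liu--Wang, Ann.\ of Math.\ 2021), so there is no internal proof to compare yours against. Judged on its own terms, your outline correctly reproduces the architecture of the regularity program in that literature: uniform pinching $\lambda \le \det(D^2u) \le \Lambda$ from the positivity of the densities, Caffarelli's strict convexity and interior $C^{2,\alpha}$ / $W^{2,p}$ estimates using the convexity of the target $W$, and then a boundary analysis to globalize. As a survey of the strategy, it is accurate, and you are also right to flag that the hypotheses on $g$ are left implicit in the statement.

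The genuine gap is in your third step, and you half-admit it yourself: the boundary analysis is not ``the main obstacle'' of the proof, it \emph{is} the theorem. Before \cite{CLW21}, global $C^{2,\alpha}$ regularity for the second boundary value problem was known only for smooth, \emph{uniformly} convex domains (Caffarelli, Urbas); the entire point of the cited Annals paper is to relax this to merely convex domains with $C^{1,1}$ boundaries. In that setting the classical barrier and obliqueness constructions you invoke break down, because there is no positive lower bound on the boundary curvature to power them, and Chen--Liu--Wang had to develop new tools (uniform obliqueness and boundary section estimates that exploit the convexity of both domains quantitatively) precisely to replace them. Writing that you ``would localize near an arbitrary boundary point and construct matching upper and lower barriers built from the $C^{1,1}$ regularity of $\partial V$ and $\partial W$'' names the goal rather than supplies the mechanism; no such direct construction is known to succeed under the stated hypotheses. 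So your proposal stands as a correct roadmap of the literature but not as a proof; given that the paper itself handles this result by citation, the defensible version of your write-up would do the same rather than present the boundary step as routine.
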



\subsection{Numerical Solutions of the Monge-Amp\'ere equation}
We first note the difficulties of the Monge-Amp\'ere Equation. 
A solution to the (\ref{MAE}) is not easy. Note that 
$$
\det(D^2 u)=u_{xx} u_{yy} - (u_{xy})^2
$$
in $\mathbb{R}^2$ and
\begin{eqnarray*}
\det(D^2 u)= 
u_{xx} u_{yy} u_{zz} + 2u_{xy} u_{yz} u_{xz}-u_{xx}(u_{yz})^2- u_{yy} (u_{xz})^2-u_{zz} (u_{xy})^2
\end{eqnarray*}
in $\mathbb{R}^3$.  That is,  the equation is nonlinear. 

As the right-hand side of the equation (\ref{MAE}) is also dependent on 
$u$, the equation is fully nonlinear.

When $f$ and $g$ are  smooth, the solution $u$ is called the classic solution.  In this case, 
the solution $u\in C^2(\mathring{V})$ by using Theorem~\ref{Bound}.
When $f$ and $g$ are probability density functions and hence, $f$ and/or $g$ may not smooth,   
there are many versions of solution to (\ref{Mproblem}) with various cost functions 
$c(\bfx, \bfy)=\|\bfx- \bfy\|^p$ for $p\ge 1$:
\begin{itemize}
\item viscosity solution,
\item Aleksandrov solution,
\item Brenier solution,  
\item Pogorelov solution and
\item etc. 
\end{itemize}
Numerical solutions of the Monge-Ampere equation in the general setting is hard to find. More
study will be carried out.  Fortunately, there is a computational method based on finite difference 
discretization in \cite{BFO10} which is a very 
effective method for numerical solution of the Monge-Ampere equation (\ref{MAE}). The idea is to 
solve  the Poisson equation  iteratively. Based on this idea, bivariate and trivariate splines
were used in  \cite{LL23} and \cite{LL24} for solving the Poisson equation iteratively which produce 
many interesting numerical examples.
A few computational results in \cite{LL23} and \cite{LL24} are included. See Figure~\ref{3dMAE}, 
Figure~\ref{Bday} and Figure~\ref{Escher}.

\begin{figure}[h]
    \centering
         \includegraphics[width=4.00in]{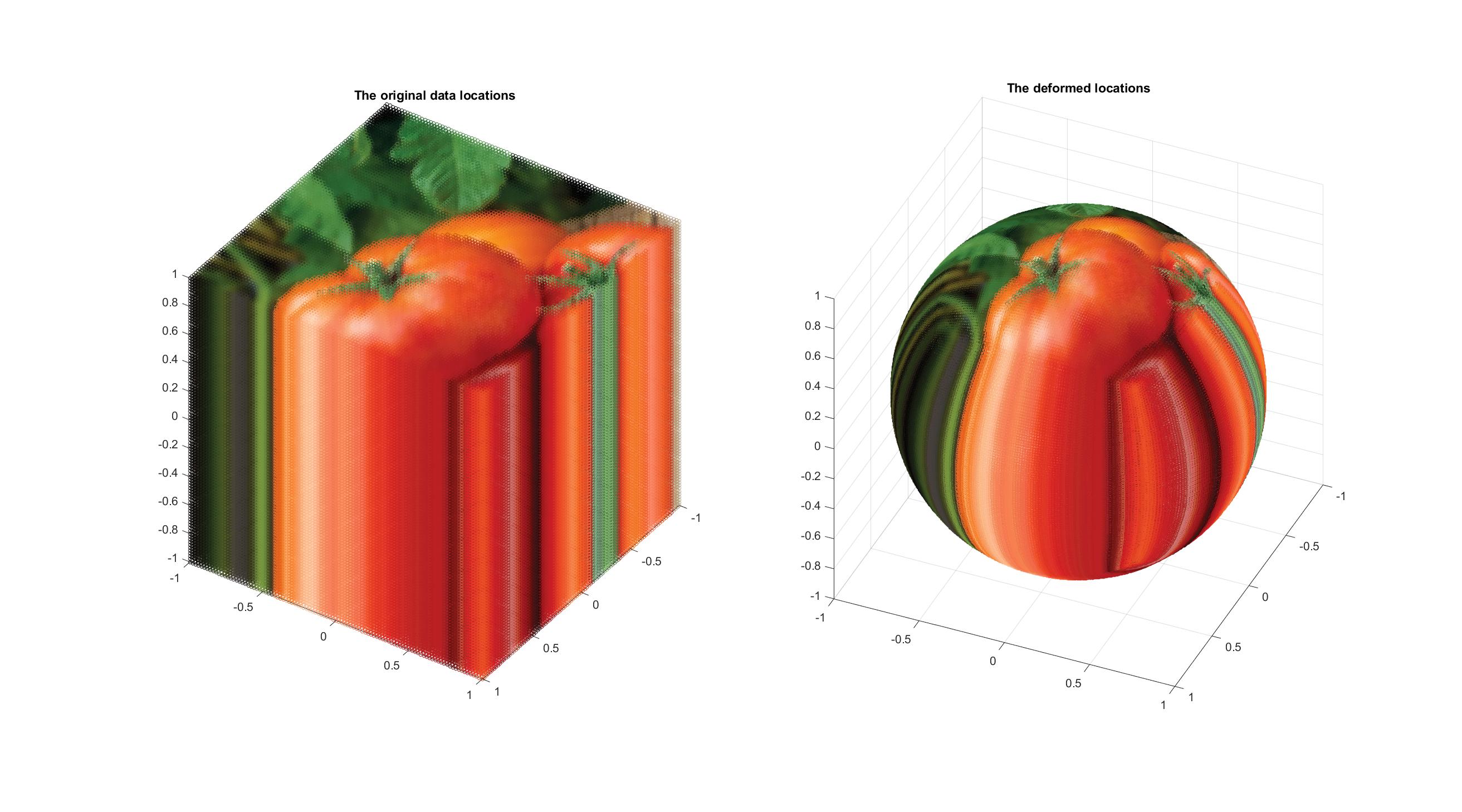}  
   \caption{A density function over the unit cube (left) is optimally transported to the ball (right)}
   \label{3dMAE}
\end{figure}

\begin{figure}[h]
         \includegraphics[width=5.00in]{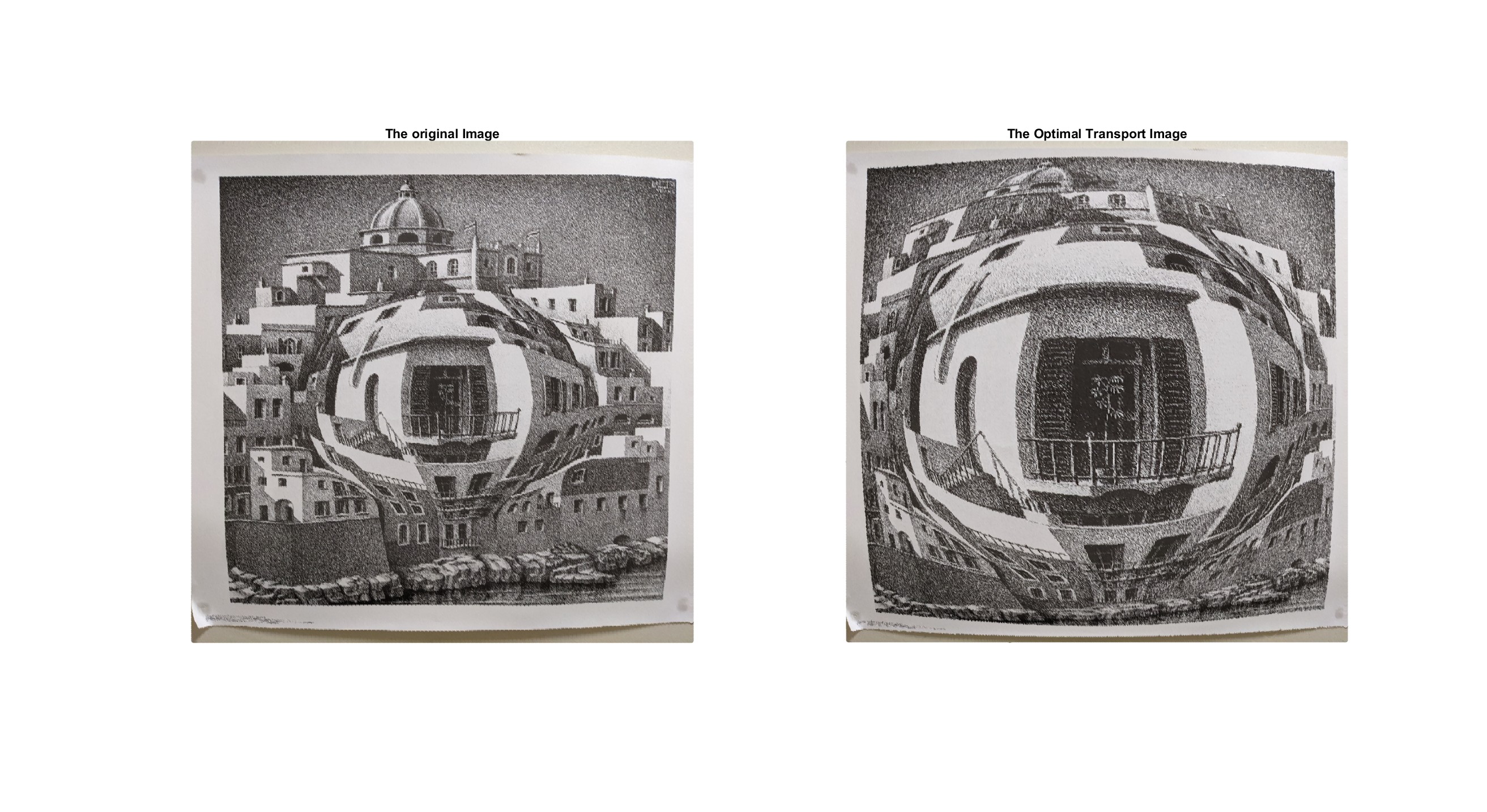}  
   \caption{An Escher art (left) and an another application of Escher's idea (right) based on bivariate spline solution of the Monge-Amp\'ere equation}
   \label{Escher}
\end{figure}
We leave the detail to \cite{LL24} for how to compute these imagte deformation.   

\section{Overcome the Curse of Dimensionality}
When approximating a high dimensional function, 
the computation of classic approximation methods suffers from the curse of dimensionality.  
For example, suppose that  $f\in C([0, 1]^d)$ with $d\gg 1$.
One usually uses Weierstrass theorem to have a degree $n$ polynomial $P_f$ such that 
$$
\|f- P_f\|_\infty \le \epsilon
$$ 
for any given tolerance $\epsilon>0$.
As the dimension of polynomial space $= {n+d\choose n}\approx n^d$ when $n >d$, one 
will need at least $N=O(n^d)$ data points in $[0, 1]^d$ to distinguish different polynomials in 
$\mathbb{P}_n$ and hence, to determine this $P_f$.
For example, 
it is known that for continuous functions $f\in C[0,1]$, Bernstein approximations 
$$
B_n(f)(x) = \sum_{i=0}^n f(\frac{i}{n}) b_{n,i}(x)
$$
where $b_{n,i}(x)= \frac{n!}{i! (n-i)!} x^i (1-x)^{n-i}$ can approximate $f$. 
\begin{lemma}
Suppose that $f\in C^2[0,1]$. 
\begin{equation}
|f(x)- B_n(f)| \le \frac{1}{8n} |f''|_{\infty}.
\end{equation}
\end{lemma}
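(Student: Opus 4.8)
The plan is to exploit the probabilistic structure of the Bernstein basis together with a second-order Taylor expansion. First I would record the three elementary moment identities for the binomial weights $b_{n,i}(x)$: the partition of unity $\sum_{i=0}^n b_{n,i}(x)=1$, the first-moment identity $\sum_{i=0}^n \frac{i}{n}\, b_{n,i}(x)=x$ (so that $B_n$ reproduces linear polynomials), and the second-moment identity $\sum_{i=0}^n \left(\frac{i}{n}-x\right)^2 b_{n,i}(x)=\frac{x(1-x)}{n}$. Each of these follows by direct computation from the binomial theorem, or equivalently by reading off the mean and variance of a $\mathrm{Binomial}(n,x)$ random variable divided by $n$.

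Next I would apply Taylor's theorem with the Lagrange remainder at the base point $x$: for each node $i/n$ there is a point $\xi_i$ between $x$ and $i/n$ with
$$f\!\left(\tfrac{i}{n}\right)=f(x)+f'(x)\left(\tfrac{i}{n}-x\right)+\tfrac{1}{2}f''(\xi_i)\left(\tfrac{i}{n}-x\right)^2.$$
Multiplying by $b_{n,i}(x)$ and summing over $i$, the partition-of-unity identity collapses the constant term to $f(x)$, while the first-moment identity forces the linear term $f'(x)\sum_i(\tfrac{i}{n}-x)b_{n,i}(x)$ to vanish exactly. This leaves
$$B_n(f)(x)-f(x)=\tfrac{1}{2}\sum_{i=0}^n f''(\xi_i)\left(\tfrac{i}{n}-x\right)^2 b_{n,i}(x).$$

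Finally I would bound the remainder. Since each $|f''(\xi_i)|\le \|f''\|_\infty$ and every weight $b_{n,i}(x)$ is nonnegative, I can extract the uniform bound from the sum and invoke the second-moment identity to obtain
$$|B_n(f)(x)-f(x)|\le \tfrac{1}{2}\|f''\|_\infty\,\frac{x(1-x)}{n}.$$
The scalar $x(1-x)$ attains its maximum $\tfrac{1}{4}$ at $x=\tfrac12$ on $[0,1]$, which yields the stated constant $\tfrac{1}{8n}$. The only real subtlety — and the step I would treat most carefully — is that the remainder points $\xi_i$ depend on both $i$ and $x$, so $f''(\xi_i)$ cannot be factored out as a single value; the argument works precisely because nonnegativity of the weights lets the uniform bound $\|f''\|_\infty$ be extracted term by term, and because the exact cancellation of the linear term (a consequence of $B_n$ reproducing affine functions) is what converts what would naively be a first-order error into a genuinely second-order one.
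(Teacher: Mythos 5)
Your proof is correct: the three binomial moment identities, the pointwise Taylor expansion with Lagrange remainder, the term-by-term extraction of $\|f''\|_\infty$ using nonnegativity of the weights, and the final bound $x(1-x)\le \tfrac14$ together give exactly the stated estimate $\tfrac{1}{8n}|f''|_\infty$. The paper itself states this lemma without proof (it is the classical Bernstein approximation bound), and your argument is precisely the standard one that fills that gap, including the correct handling of the subtlety that $\xi_i$ depends on $i$ and $x$.
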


To approximate a high dimensional function $f\in C[0, 1]^d$, 
write 
$$B_n(f) = \sum_{i_1,\cdots, i_d=0}^n f(\frac{i_1}{n}, 
\cdots, \frac{i_d}{n}) b_{n,i_1}(x_1) \cdots b_{n, i_d}(x_d).
$$ 
A standard tensor product approximation yields 
\begin{theorem}
Suppose that $f\in C^2[0,1]^d$. Then 
\begin{equation}
|f(x)- B_n(f)| \le \frac{d}{8n} |f''|_{\infty}.
\end{equation}
where 
$$B_n(f) = \sum_{i_1,\cdots, i_d=0}^n f(\frac{i_1}{n}, 
\cdots, \frac{i_d}{n}) b_{n,i_1}(x_1) \cdots b_{n, i_d}(x_d).
$$ 
\end{theorem}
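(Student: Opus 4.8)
The plan is to exploit the tensor-product structure of $B_n$ and reduce the multivariate estimate to $d$ applications of the univariate Lemma stated just above. First I would factor the multivariate Bernstein operator as a composition of $d$ univariate operators, one acting in each coordinate. For $1 \le k \le d$, let $B_n^{(k)}$ denote the univariate Bernstein operator applied only to the variable $x_k$, treating the remaining coordinates as fixed parameters. Because the kernels in distinct variables are independent, these operators commute and satisfy $B_n = B_n^{(1)} B_n^{(2)} \cdots B_n^{(d)}$.

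The key structural fact I would establish next is that each $B_n^{(k)}$ is a contraction in the supremum norm. This follows because the univariate Bernstein basis functions are nonnegative and form a partition of unity, $\sum_{i=0}^n b_{n,i}(x) = 1$, so $B_n^{(k)} g$ is a convex combination of values of $g$ and hence $\|B_n^{(k)} g\|_\infty \le \|g\|_\infty$. Consequently any composition of such operators is again a contraction.

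Then I would telescope. Setting $P_0 = I$ and $P_k = B_n^{(1)} \cdots B_n^{(k)}$, so that $P_d = B_n$, I write
\[
f - B_n(f) = \sum_{k=1}^d (P_{k-1} - P_k) f = \sum_{k=1}^d P_{k-1}\bigl(f - B_n^{(k)} f\bigr).
\]
For each fixed $k$, the function $f - B_n^{(k)} f$ is, as a function of the single variable $x_k$ with the other coordinates frozen, exactly the univariate Bernstein remainder; the Lemma therefore yields the pointwise bound $\| f - B_n^{(k)} f \|_\infty \le \frac{1}{8n} \| \partial^2 f / \partial x_k^2 \|_\infty \le \frac{1}{8n} |f''|_\infty$. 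Applying the contraction estimate to $P_{k-1}$ and summing the $d$ terms gives $|f(x) - B_n(f)(x)| \le \frac{d}{8n} |f''|_\infty$, as claimed.

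The main obstacle, and the only point requiring genuine care, is making precise the freezing-of-variables step: one must verify that applying the univariate Lemma coordinatewise is legitimate and that the relevant one-dimensional second derivative in the $k$-th slot is the pure partial $\partial^2 f / \partial x_k^2$, which is dominated by the quantity $|f''|_\infty$. Everything else---the commutation of the $B_n^{(k)}$, the partition-of-unity contraction bound, and the telescoping---is routine once the operator factorization is in place.
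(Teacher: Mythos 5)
Your proof is correct and is precisely the ``standard tensor product approximation'' argument that the paper invokes without writing out: the factorization $B_n = B_n^{(1)}\cdots B_n^{(d)}$, the partition-of-unity contraction bound, and the telescoping sum reducing to $d$ applications of the univariate lemma. Nothing is missing, and the coordinatewise application of the lemma is legitimate since for $f\in C^2[0,1]^d$ each slice $x_k \mapsto f(x_1,\dots,x_d)$ is $C^2$ with second derivative $\partial^2 f/\partial x_k^2$ bounded by $|f''|_\infty$.
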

Note that $B_n(f)$ requires $(n+1)^d$ terms.  This is the bottleneck of the approximation 
of high dimensional functions. 
A practical and theoretical question is: is it possible to sample fewer function data and use 
a few basis functions for 
a good approximation of high dimensional function?

It turns out that this is possible by using Kolmogorov Superposition Theorem (KST).
\begin{theorem}[Lorentz 1966\cite{L66}] 
\label{kolmogorov}
There exists irrational numbers $0<\lambda_p\leq 1$ for $p=1,\cdots, d$ and strictly increasing 
$Lip(\alpha)$ functions $\phi_q(x)$  
defined on $I=[0,1]$ for $q=0, \cdots, 2d$ such that 
for every $f\in C([0,1]^d)$, 
there exists a continuous function $g(u)$, $u\in [0,d]$, such that 
\begin{equation}
\label{repres}
    f(x_1,\cdots,x_d)=\sum_{q=0}^{2d} g(\sum_{i=1}^d\lambda_i\phi_q(x_i)).
\end{equation}
\end{theorem}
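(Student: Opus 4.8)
The plan is to split the proof into two parts that are logically independent: the construction of the \emph{universal} inner data $(\lambda_i,\phi_q)$, which must work for all $f$ simultaneously, and then, for each fixed $f$, the production of the single outer function $g$. The guiding principle is that once the inner maps
\[
\Phi_q(x_1,\dots,x_d)=\sum_{i=1}^d \lambda_i\,\phi_q(x_i),\qquad q=0,\dots,2d,
\]
are built with the correct separation and covering behaviour, the outer function falls out of a routine contraction argument, so all the genuine difficulty is concentrated in the inner construction.

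First I would construct the inner functions. I would obtain each $\phi_q$ as the uniform limit of a sequence of strictly increasing piecewise-linear functions adapted to a nested family of grids on $[0,1]^d$: at level $k$ one partitions $[0,1]$ into short subintervals and takes the induced grid of cubes, while the $2d+1$ index values $q$ correspond to $2d+1$ mutually \emph{shifted} copies of this grid. Two properties must be arranged in the limit. Property (i), \emph{separation}: $\Phi_q$ sends distinct cubes of its grid onto disjoint closed subintervals of $[0,d]$. Property (ii), \emph{covering}: for every $x\in[0,1]^d$ at least $d+1$ of the $2d+1$ grids place $x$ strictly inside (not on the boundary of) one of their cubes. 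The irrationality and rational independence of the $\lambda_i$ are exactly what force separation, since they make the sums $\sum_i\lambda_i\phi_q(\cdot)$ arising from different cubes land at different real values, so one may assign them disjoint target intervals; a measure-theoretic or Baire-category choice of the $\lambda_i$ secures this genericity. Bounding the increments of the piecewise-linear pieces from level to level keeps each $\phi_q$ strictly increasing and $\mathrm{Lip}(\alpha)$ in the limit.

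With the inner data fixed, I would build $g$ by iterating on the residual. The key is a one-step \textbf{contraction lemma}: for every $F\in C([0,1]^d)$ there is a continuous $g_1$ on $[0,d]$ with $\|g_1\|_\infty\le \tfrac{1}{2d+1}\|F\|_\infty$ such that
\[
\Big\| F-\sum_{q=0}^{2d} g_1\circ\Phi_q \Big\|_\infty \le \theta\,\|F\|_\infty ,
\qquad \theta=\frac{2d}{2d+1}<1 .
\]
One proves this by refining until the oscillation of $F$ over a cube is negligible and then setting $g_1$ equal to $\tfrac{1}{2d+1}F(c_S)$ on the image interval of each cube $S$ (with center $c_S$), tapering to zero in the gaps left by (i). By (ii), at each $x$ at least $d+1$ of the $2d+1$ terms reproduce $\tfrac{1}{2d+1}F(x)$ up to the small oscillation, while the at most $d$ remaining terms are each bounded by $\tfrac{1}{2d+1}\|F\|_\infty$; the triangle inequality then yields the stated $\theta$. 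Applying the lemma to $F=f$, then to $f_1=f-\sum_q g_1\circ\Phi_q$, and so on, produces $g_k$ with $\|g_k\|_\infty\le \tfrac{1}{2d+1}\,\theta^{\,k-1}\|f\|_\infty$, so $g=\sum_{k\ge 1}g_k$ converges uniformly on $[0,d]$ to a continuous function, and telescoping the residual bounds gives $f=\sum_{q=0}^{2d} g\circ\Phi_q$ exactly.

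I expect the main obstacle to be the simultaneous realization of (i) and (ii). Separation wants the cube images spread far apart, whereas covering forces the shifted grids to overlap enough that every point lies interior to many of them; these pull in opposite directions and must be reconciled by a careful choice of grid shifts together with the arithmetic independence of the $\lambda_i$, propagated consistently through all refinement levels so that the limiting $\phi_q$ stay strictly increasing and H\"older. Once that combinatorial and arithmetic bookkeeping is in place, the outer iteration is entirely mechanical.
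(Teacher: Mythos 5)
The paper gives no proof of its own here---it simply states that the proof is constructive and defers to Lorentz \cite{L66}---and your sketch is exactly that standard Lorentz construction: shifted families of cubes with the separation and covering properties for the inner maps $\Phi_q$, followed by the one-step contraction lemma with $\theta=\frac{2d}{2d+1}$ (plus an absorbable oscillation term) and the geometric-series iteration producing the outer function $g$. Your outline is correct and takes essentially the same (indeed, the classical) route, the only caveat being that the technical burden you rightly flag---making separation and covering coexist through all refinement levels while keeping each $\phi_q$ strictly increasing and $\mathrm{Lip}(\alpha)$---is precisely the part that the citation to \cite{L66} is carrying.
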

\begin{proof}
The proof is constructive. See \cite{L66} for detail. 
\end{proof}
\begin{remark}
$\phi_q$ are independent of $f$, but $g$ is dependent on $f$. 
For any $f\in C[0,1]^d$, there is a continuous function $g_f\in C[0, d]$ such that 
\begin{equation}
\label{representation}
    f(x_1,\cdots,x_d)=\sum_{q=0}^{2d} g_f(\sum_{i=1}^d\lambda_i\phi_q(x_i)).
\end{equation}
On the other hand, for any $g\in C[0, d]$, there is a continuous function $f_g\in C[0, 1]^d$ 
via the representation above. 
\begin{equation}
\label{representation2}
    f_g(x_1,\cdots,x_d)=\sum_{q=0}^{2d} g(\sum_{i=1}^d\lambda_i\phi_q(x_i)).
\end{equation}
\end{remark}

For example, if $g_n(t)=t^n$ is a polynomial of degree $n$ over $[0, d]$, then 
\begin{equation}
\label{Kpolynomial}
    Kp_n(x_1,\cdots,x_d)=\sum_{q=0}^{2d} (\sum_{i=1}^d\lambda_i\phi_q(x_i))^n
\end{equation}
is called K-polynomial of degree $n\ge 1$.  One can combine 
the Kolmogorov superposition theorem and the well known Weierstrass Theorem together to have
\begin{theorem}[K-Weierstrass Theorem (cf. \cite{LS23})]
For any $f\in C[0, 1]^d$, for any $\epsilon>0$, there exists a K-polynomial $Kp_n$ such that 
\begin{equation}
\label{Kpolynomial}
    |f(x_1, \cdots, x_d)- Kp_n(x_1, \cdots, x_d)|\le \epsilon,
\end{equation}
where $Kp_n$ has $(2d+1) (n+1)$ terms. 
\end{theorem}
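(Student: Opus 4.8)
The plan is to combine the two ingredients already in hand — the Kolmogorov Superposition Theorem (Theorem~\ref{kolmogorov}) and the classical Weierstrass theorem — by approximating the one-dimensional outer function and then lifting that approximation through the fixed inner functions $\phi_q$. First I would invoke Theorem~\ref{kolmogorov}, in the form of the representation~(\ref{representation}): for the given $f\in C[0,1]^d$ there is a continuous outer function $g_f\in C[0,d]$ with
$$
f(x_1,\cdots,x_d)=\sum_{q=0}^{2d} g_f\Big(\sum_{i=1}^d\lambda_i\phi_q(x_i)\Big).
$$
The crucial structural observation is that the inner argument $\xi_q(\bfx):=\sum_{i=1}^d\lambda_i\phi_q(x_i)$ always lands in the fixed interval $[0,d]$: since each $\phi_q$ maps $[0,1]$ into $[0,1]$ and $0<\lambda_i\le 1$, we have $0\le \xi_q(\bfx)\le \sum_{i=1}^d\lambda_i\le d$. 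This is precisely why $g_f$ is defined on $[0,d]$, and why one single univariate approximation of $g_f$ on $[0,d]$ will control the error for every index $q$ and every point $\bfx$ simultaneously.

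Next I would apply the Weierstrass theorem to $g_f$ on the compact interval $[0,d]$: for the prescribed tolerance $\epsilon>0$ there is a univariate polynomial $p_n(t)=\sum_{k=0}^n a_k t^k$ of some degree $n$ with $\|g_f-p_n\|_{\infty,[0,d]}\le \epsilon/(2d+1)$. I would then define the K-polynomial as the lift
$$
Kp_n(x_1,\cdots,x_d)=\sum_{q=0}^{2d} p_n\Big(\sum_{i=1}^d\lambda_i\phi_q(x_i)\Big)=\sum_{q=0}^{2d}\sum_{k=0}^n a_k\,\xi_q(\bfx)^k,
$$
which is exactly the linear combination $\sum_{k=0}^n a_k\,Kp_k$ of the monomial K-polynomials defined earlier. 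Subtracting the two representations and applying the triangle inequality over the $2d+1$ summands gives
$$
|f(\bfx)-Kp_n(\bfx)|\le \sum_{q=0}^{2d}\big|g_f(\xi_q(\bfx))-p_n(\xi_q(\bfx))\big|\le (2d+1)\,\frac{\epsilon}{2d+1}=\epsilon,
$$
valid for all $\bfx\in[0,1]^d$ precisely because every $\xi_q(\bfx)\in[0,d]$. Finally I would count terms: the outer polynomial contributes $n+1$ monomials and there are $2d+1$ inner arguments indexed by $q$, so $Kp_n$ has exactly $(2d+1)(n+1)$ terms, as claimed.

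The genuine difficulty of the statement is entirely absorbed into Theorem~\ref{kolmogorov}, which supplies the dimension-free representation with the fixed, $f$-independent inner functions $\phi_q$ and weights $\lambda_i$; everything after that is a one-dimensional Weierstrass approximation. Consequently the only points that require care are the range bound $\xi_q(\bfx)\in[0,d]$, which is what lets a single outer polynomial control the error uniformly in $q$ and $\bfx$, and the bookkeeping of the term count. The main obstacle, therefore, is not in the argument itself but in the hypotheses one relies on: I must use the standard normalization of Theorem~\ref{kolmogorov} in which the inner functions take values in $[0,1]$, so that the outer polynomial is approximated on the correct interval $[0,d]$. If one instead wished to make $n$ quantitatively explicit in terms of $\epsilon$, the obstacle would shift to controlling the modulus of continuity of $g_f$, which is only of $\mathrm{Lip}(\alpha)$-type and not under our control; but for the qualitative statement as posed this is unnecessary.
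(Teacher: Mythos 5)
Your proposal is correct and follows essentially the same route as the paper's proof: apply Weierstrass to the K-outer function $g_f$ on $[0,d]$ with tolerance $\epsilon/(2d+1)$, lift the polynomial through the fixed inner functions, and sum the triangle inequality over the $2d+1$ summands. The extra details you supply — the range bound $\xi_q(\bfx)\in[0,d]$, the identification of the lift as a linear combination of the monomial K-polynomials, and the explicit $(2d+1)(n+1)$ term count — are correct fill-ins that the paper leaves implicit, not a different argument.
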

\begin{proof}
Let $g_f$ be the K-outer function which is in $C[0, d]$. For $\epsilon/(2d+1)$, we use 
Weierstrass theorem to 
have a polynomial $p_n$ such that $|g_f(t)- p_n(t)|\le \epsilon/(2d+1)$.  It follows from 
Lorentz's representation (\ref{repres})  and (\ref{Kpolynomial}) that 
\begin{eqnarray}
|f(x_1, \cdots, x_d) - Kp_n(x_1,\cdots,x_d)| &\le & \sum_{q=0}^{2d}
|g_f(\sum_{i=1}^d\lambda_i\phi_q(x_i)) -p_n(\sum_{i=1}^d\lambda_i\phi_q(x_i))|\cr
&\le & \sum_{q=0}^{2d} \epsilon/(2d+1) = \epsilon.
\end{eqnarray}  
Hence, this completes the proof.  
\end{proof}

We now expplain how to use B-splines to approximate $g_f$.  
Let $B_{n,i}, i=1, \cdots, nd$ be the B-spline of degree $k$  
with $nd$ equally-spaced knots over $[0, d]$ as K-outer function $g$ to generate 
\begin{equation}
\label{KBsplines}
KB_{n,i}(x_1, \cdots, x_d) = \sum_{q=0}^{2d} B_{n,i}(\sum_{i=j}^d\lambda_j\phi_q(x_j))
\end{equation}
which are called KB splines of degree $k$ for short. Some researchers called those functions  
Kolmogorov spline networks (see, e.g. \cite{IP03}).     

\begin{theorem}[Lai and Shen, 2023\cite{LS22}]
\label{rate3}
Suppose that  $f\in C([0,1]^d)$. Then 
there is a KB spline $KB_{n}(f)=\sum_{i=1}^{nd} c_i(f) KB_{n,i}$ such that 
\begin{equation}
\label{linearrate}
|f(x_1,\cdots, x_d)- KB_{n}(f)(x_1, \cdots,x_d)|\le (2d+1)\omega(g_f, 1/n),
\end{equation}
for all $\bfx=(x_1, \cdots, x_d)\in [0, 1]^d$, where 
$\omega(g_f,t)$ is the modulus of continuity of K-outer function $g_f$.
\end{theorem}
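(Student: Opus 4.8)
The plan is to mimic the proof of the K-Weierstrass theorem given above, but with the univariate polynomial $p_n$ replaced by a univariate \emph{spline} approximation of the K-outer function $g_f$. First I would invoke the Kolmogorov superposition theorem (Theorem~\ref{kolmogorov}) to write
\begin{equation}
f(\bfx) = \sum_{q=0}^{2d} g_f\Big(\sum_{i=1}^d \lambda_i \phi_q(x_i)\Big), \qquad g_f \in C[0,d],
\end{equation}
and record that for every $\bfx \in [0,1]^d$ and every $q$ the argument $t_q(\bfx) := \sum_{i=1}^d \lambda_i \phi_q(x_i)$ lies in $[0,d]$: since $0 < \lambda_i \le 1$ and, in the standard normalization, each $\phi_q$ maps $[0,1]$ into $[0,1]$, we get $0 \le t_q(\bfx) \le \sum_i \lambda_i \le d$, which is precisely the interval on which $g_f$ and the B-splines $B_{n,i}$ are defined.

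The key step is a purely univariate B-spline estimate. With the $nd$ equally-spaced knots on $[0,d]$ (mesh size $1/n$), I would choose the coefficients $c_i(f)$ so that the spline $S_n(g_f) := \sum_{i=1}^{nd} c_i(f) B_{n,i}$ satisfies $\|g_f - S_n(g_f)\|_{\infty,[0,d]} \le \omega(g_f, 1/n)$. For piecewise-linear B-splines this is immediate by taking $c_i(f)$ to be the values of $g_f$ at the knots: on each knot interval $S_n(g_f)$ is a convex combination of two adjacent function values, so $|g_f(t) - S_n(g_f)(t)|$ is bounded by the oscillation of $g_f$ over an interval of length $1/n$, namely $\omega(g_f,1/n)$, with constant exactly one. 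For higher degree $k$ one replaces knot interpolation by a quasi-interpolant that reproduces constants and is locally bounded, which yields the same modulus-of-continuity bound at mesh size $1/n$. The coefficients $c_i(f)$ appearing in the statement are exactly these.

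Finally I would assemble the pieces by linearity. Using the definition (\ref{KBsplines}) of $KB_{n,i}$ and interchanging the two finite sums,
\begin{equation}
KB_n(f)(\bfx) = \sum_{i=1}^{nd} c_i(f) \sum_{q=0}^{2d} B_{n,i}(t_q(\bfx)) = \sum_{q=0}^{2d} S_n(g_f)(t_q(\bfx)).
\end{equation}
Subtracting this from the Kolmogorov representation of $f$ and applying the triangle inequality termwise gives
\begin{equation}
|f(\bfx) - KB_n(f)(\bfx)| \le \sum_{q=0}^{2d} \big| g_f(t_q(\bfx)) - S_n(g_f)(t_q(\bfx)) \big| \le (2d+1)\,\omega(g_f, 1/n),
\end{equation}
where the last inequality uses that each $t_q(\bfx) \in [0,d]$ together with the univariate estimate, and that there are $2d+1$ summands. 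This is the claimed bound.

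The hard part will be the univariate estimate with the \emph{constant one}: one must pick the B-spline scheme so that the error is $\omega(g_f,1/n)$ rather than $C\,\omega(g_f,1/n)$ for a larger $C$, since only a unit constant reproduces the factor $(2d+1)$ exactly. Linear-spline interpolation at the knots handles this cleanly. The remaining care points are confirming that the chosen knot layout on $[0,d]$ really yields mesh size $1/n$, and that $t_q(\bfx)$ never leaves $[0,d]$, so that the univariate bound applies at every evaluation point.
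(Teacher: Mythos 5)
Your proposal is correct and follows essentially the same route as the paper: the Kolmogorov/Lorentz representation of $f$ via the K-outer function $g_f$, a univariate spline approximation of $g_f$ on $[0,d]$ with error at most $\omega(g_f,1/n)$ (knot interpolation by linear B-splines giving the constant exactly one), and then the termwise triangle inequality over the $2d+1$ summands, exactly as in the paper's proof of the K-Weierstrass theorem with the polynomial $p_n$ replaced by the spline quasi-interpolant. The care points you flag (mesh size $1/n$ on $[0,d]$ and the arguments $\sum_i \lambda_i \phi_q(x_i)$ staying in $[0,d]$) are the right ones and are handled as you describe.
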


However, it is hard to implement $\phi_q$'s accurately, in particular when $d\gg 1$. Hence,  
these KB-splines are very noisy which can be seen in Figure~\ref{LKB}.  
A few  researchers thought that they are useless   
as the noises are inherent and essential,  e.g. \cite{GP89}.

One approach discovered in \cite{LS22} is to smooth these KB-splines so that they can be useful. 
Multivariate splines have been used for scattered data fitting and denoising by using the
so-called penalized least squares method. See, e.g. \cite{LW13}.  The classic penalized 
least squares splines method can be recalled as follows. 
 For a given data set $\{(x_i,y_i, z_i), i=1, \cdots, N\}$ with 
$(x_i,y_i)\in [0, 1]^2$ and $z_i= f(x_i,y_i)+ \epsilon_i, i=1, \cdots, N$  
with noises $\epsilon_i$ which may not be 
very small, the penalized least squares method  is to find 
\begin{equation}
\label{PLS}
\min_{s\in S^1_5(\triangle)} \sum_{i=1, \cdots, N} |s(x_i,y_i)- z_i|^2  
+ \lambda {\cal E}_2(s) 
\end{equation}
with $\lambda\approx 1$, where ${\cal E}_2(s)$ is the thin-plate energy functional 
defined as follows.
\begin{equation}
\label{E2}
{\cal E}_2(s) =\int_\Omega |\frac{\partial^2}{\partial x^2} s|^2 + 2
|\frac{\partial^2}{\partial x \partial y} s|^2 + |\frac{\partial^2}{\partial y^2} s|^2. 
\end{equation} 
The researchers in \cite{LS22} adopted this approach with $\lambda=O(1)$ as the noises are large.    
It turns out that the multivariate spline denoising method is very effective. 
The smooth version of KB-splines are called LKB splines. Some of them are shown in Figure~\ref{LKB}. 
These splines are in $S^2_8(\triangle_{32})$ with 32 triangles and 25 vertices.  The choice of this 
spline space is a compremise between the efficiency and effectiveness of the denoising.
In fact, some other spline spaces, e.g. $S^2_8(\triangle_{128})$ and $S^2_{12}(\triangle_{32})$ 
can do denoising better, but only slightly better with a significantly more 
computational time than $S^2_8(\triangle_{32})$.  
 
\begin{figure}[h]
    \centering
    \begin{tabular}{cc} 
        \includegraphics[width=2.00in]{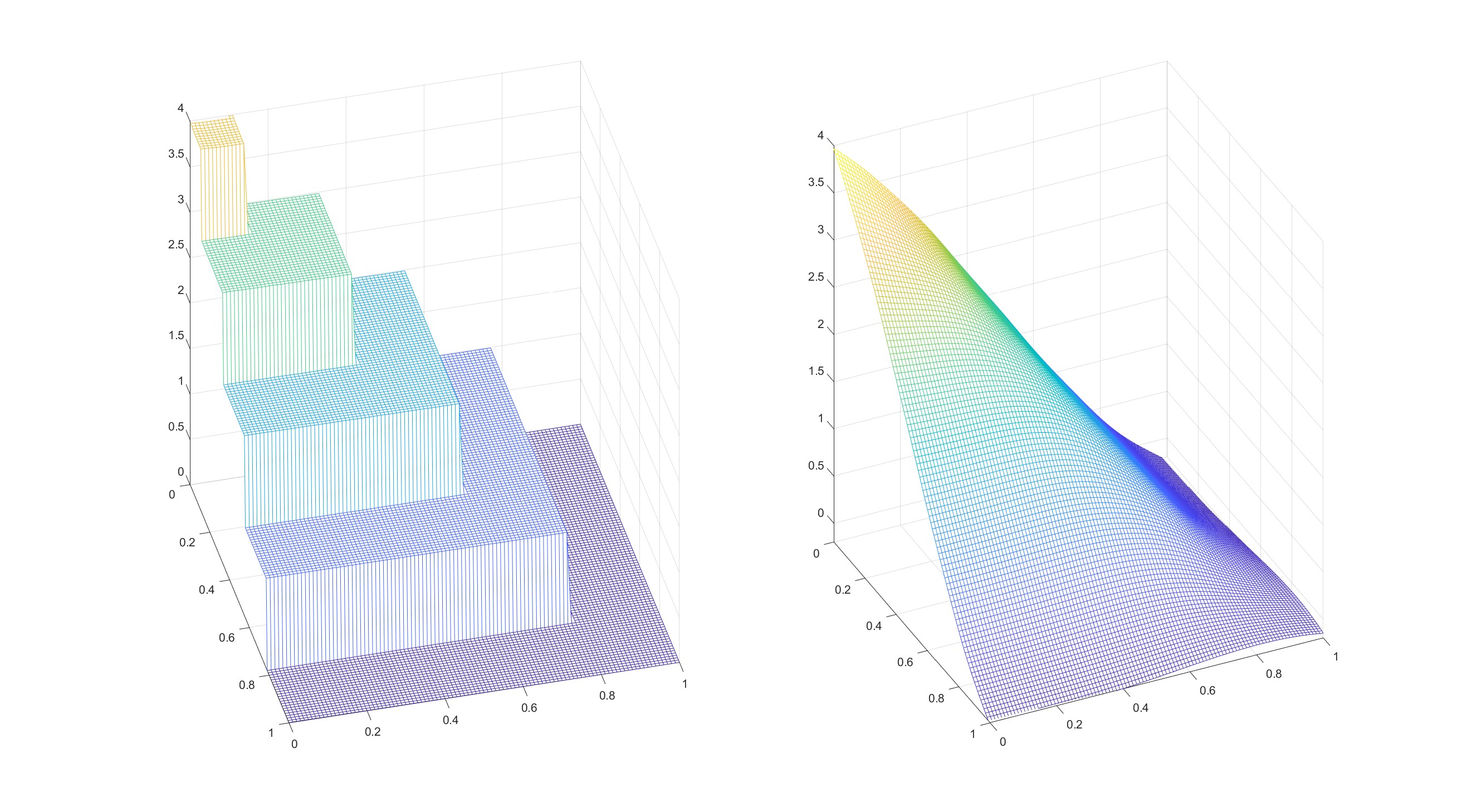}  &  \includegraphics[width=2.00in]{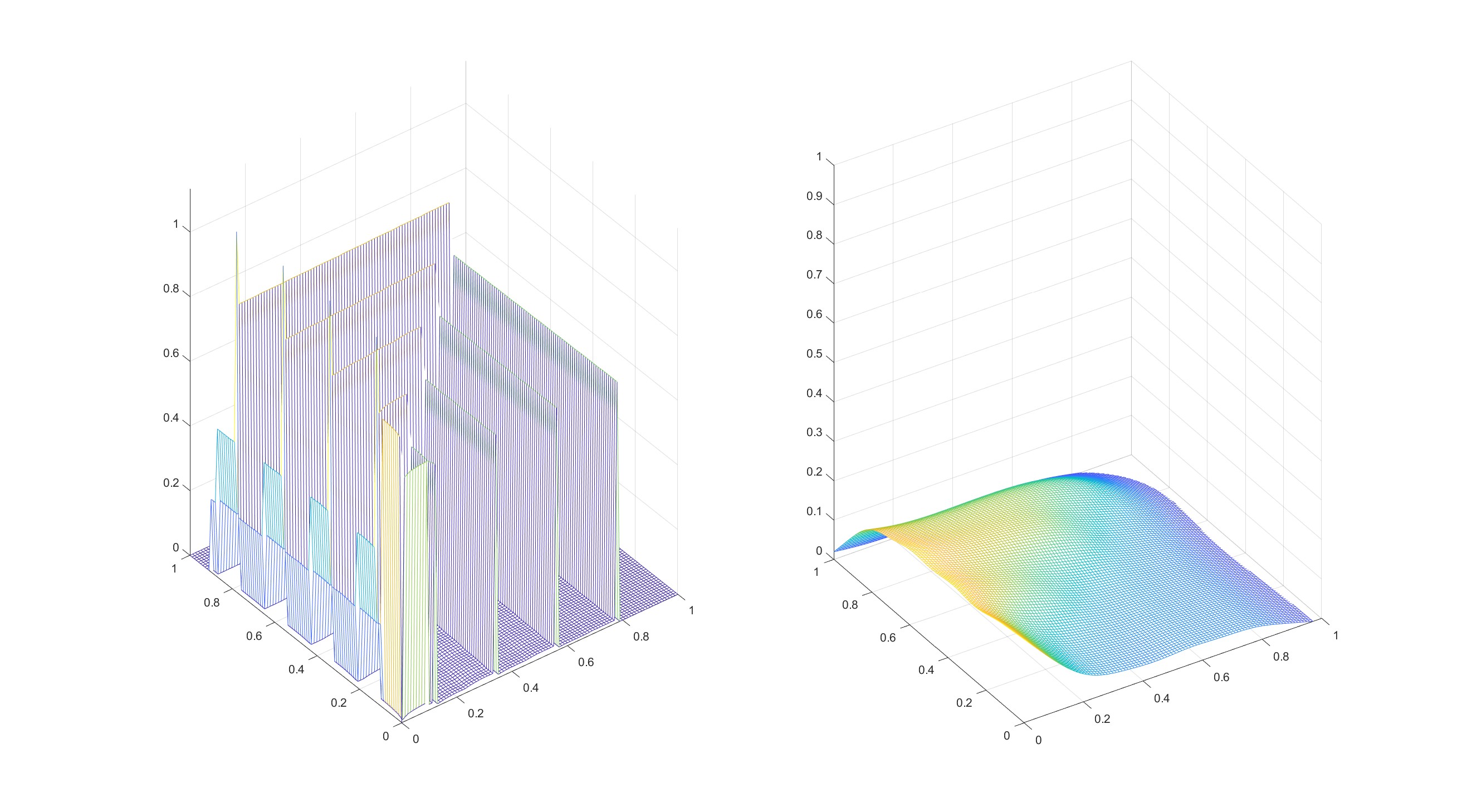} \\
        \includegraphics[width=2.00in]{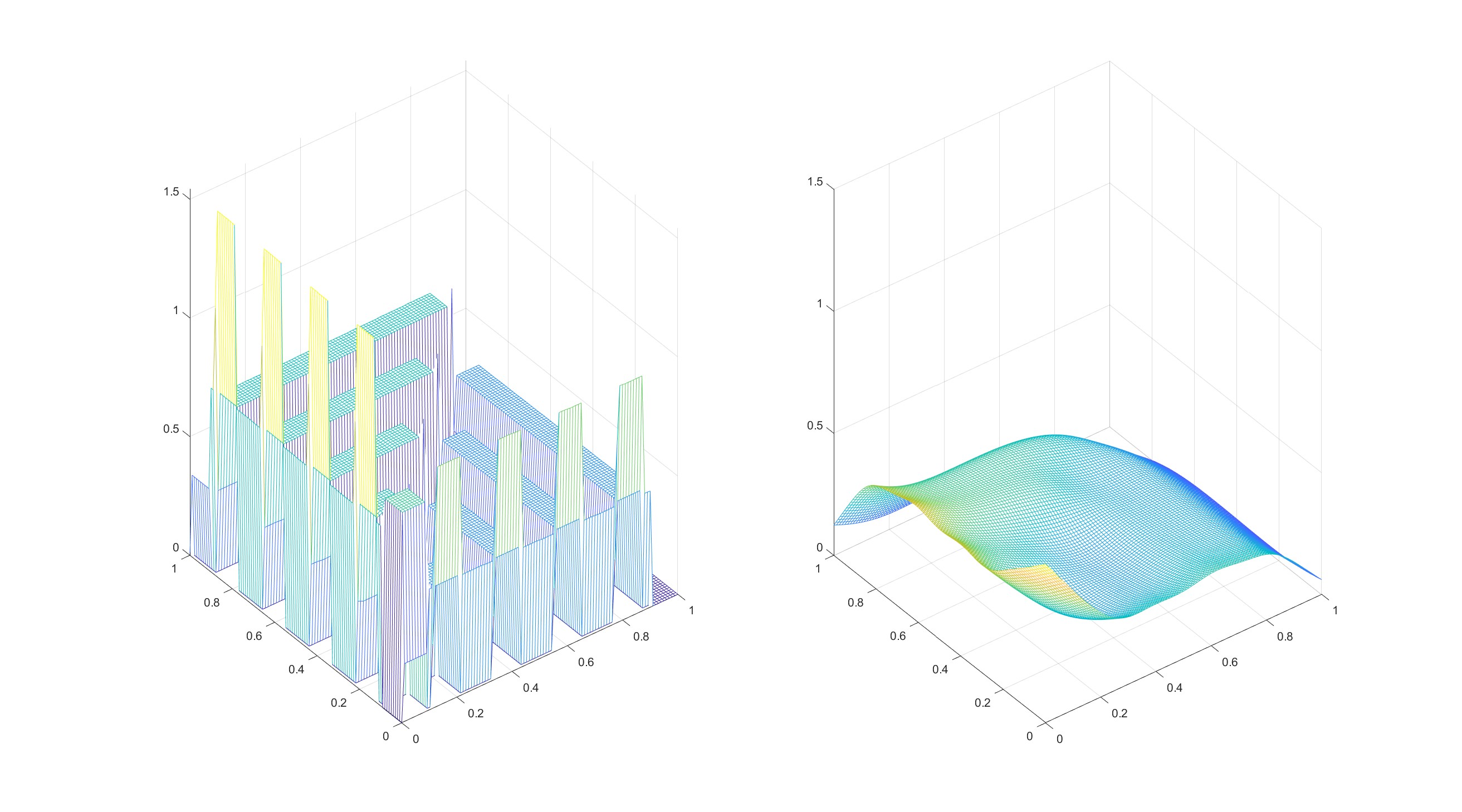} &
        \includegraphics[width=2.00in]{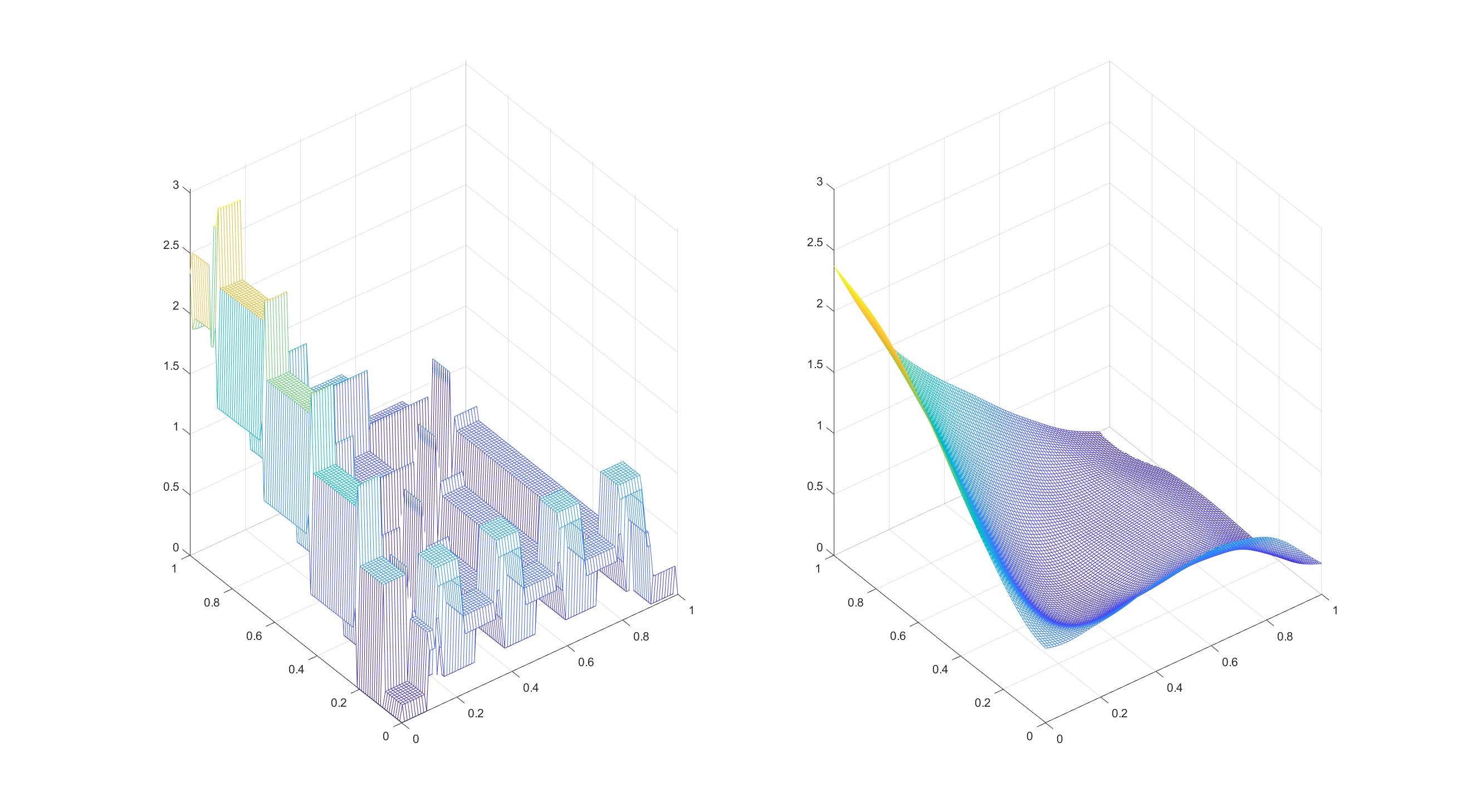} \\
              \includegraphics[width=2.00in]{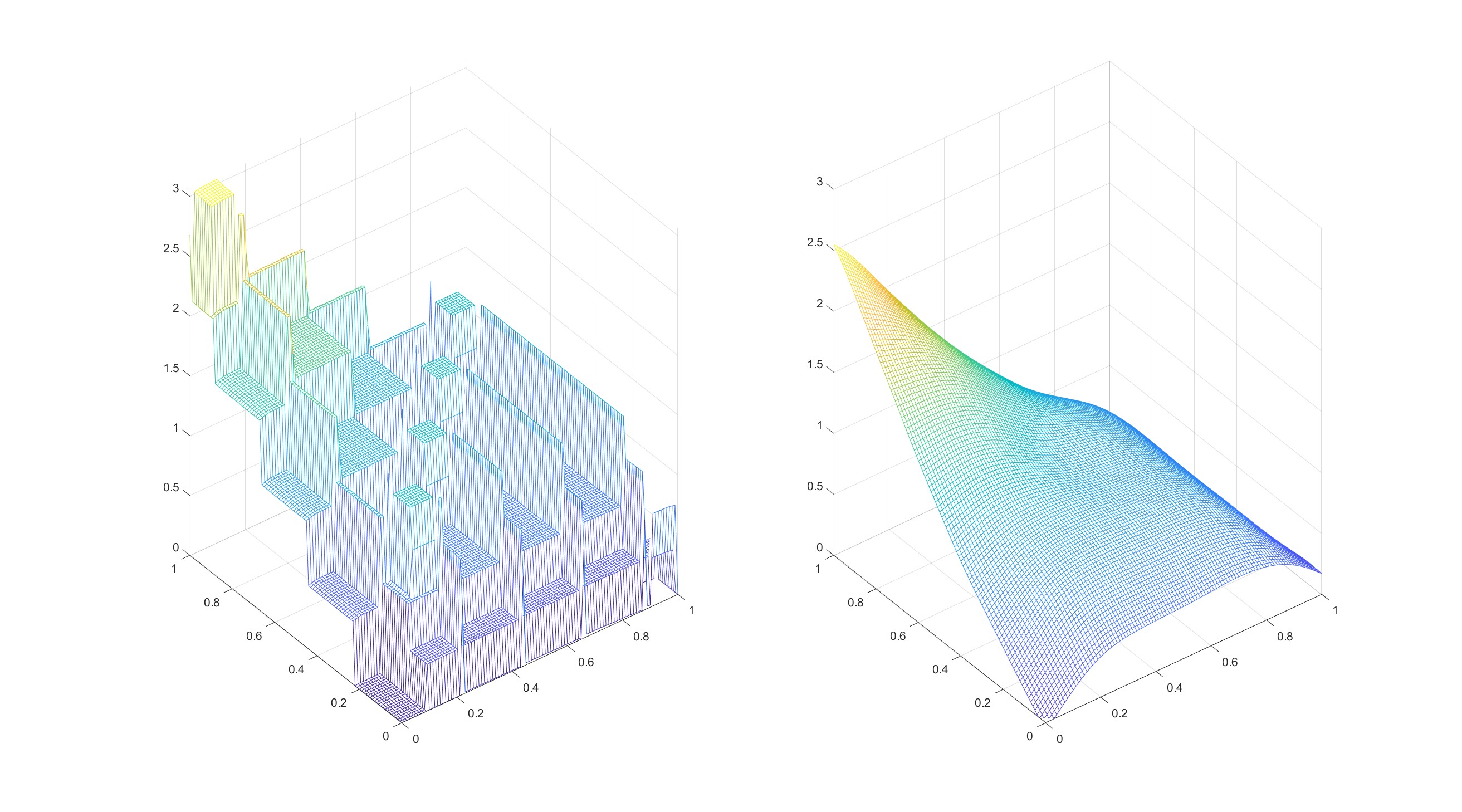}  &  \includegraphics[width=2.00in]{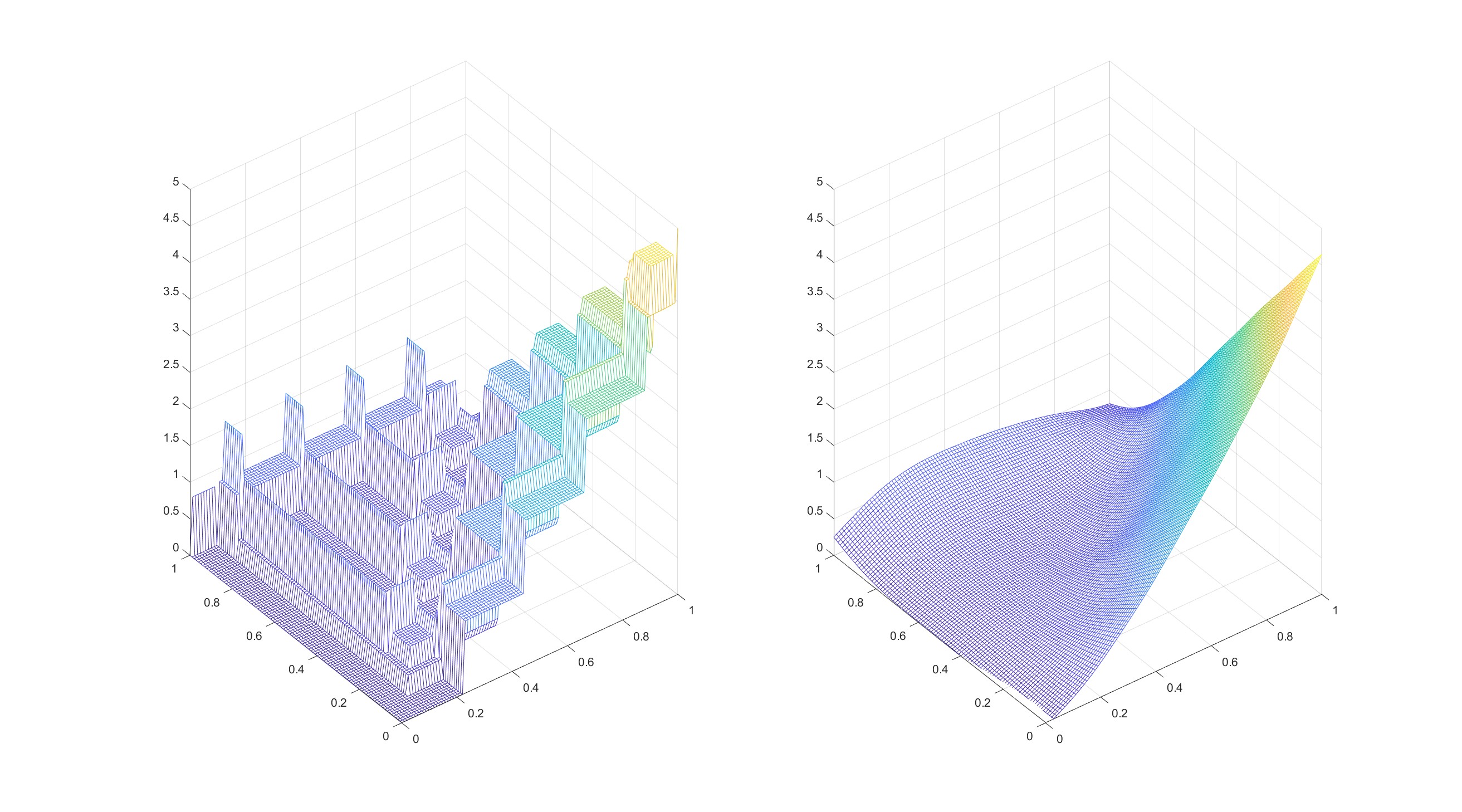} \\
               \includegraphics[width=2.00in]{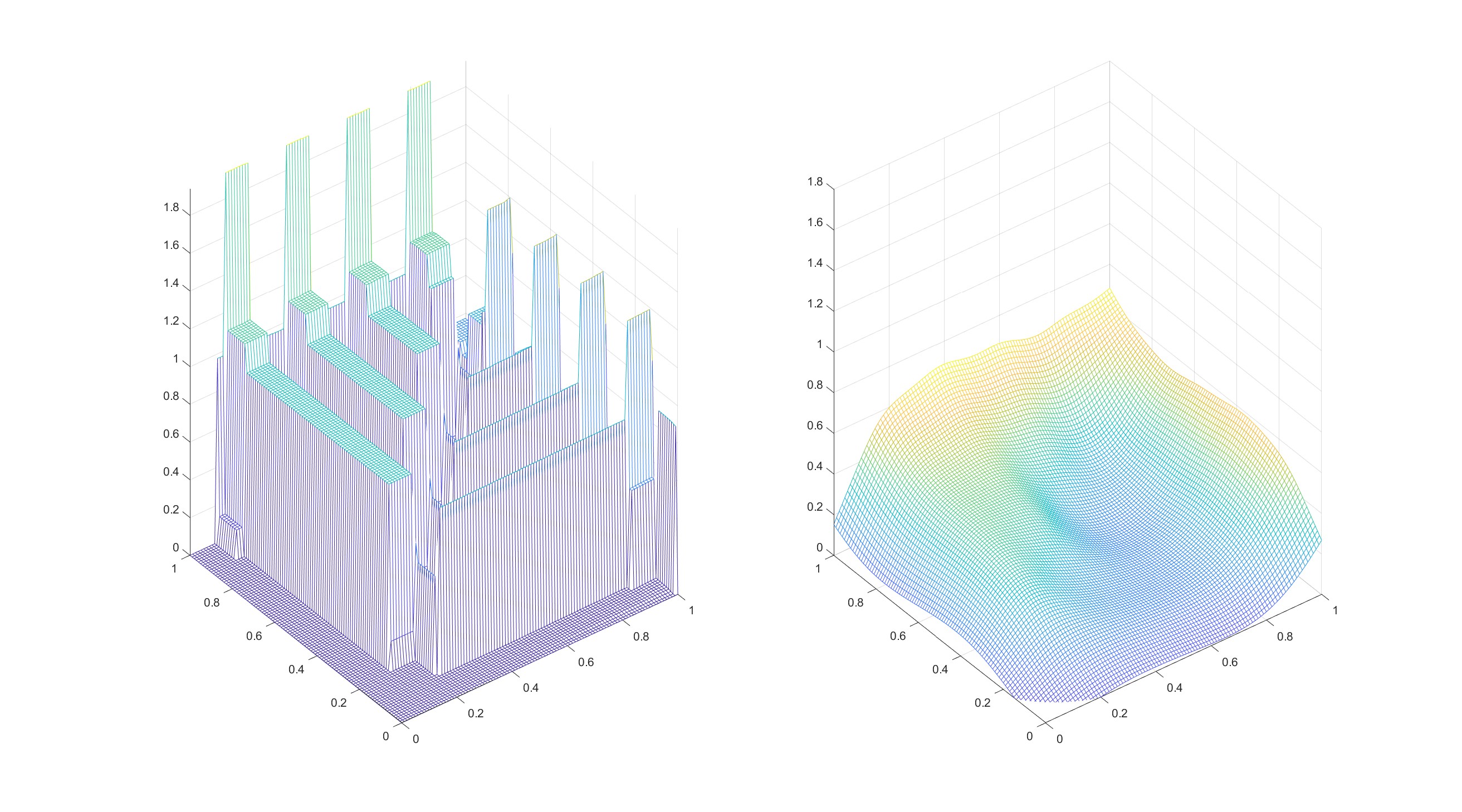}  &  \includegraphics[width=2.00in]{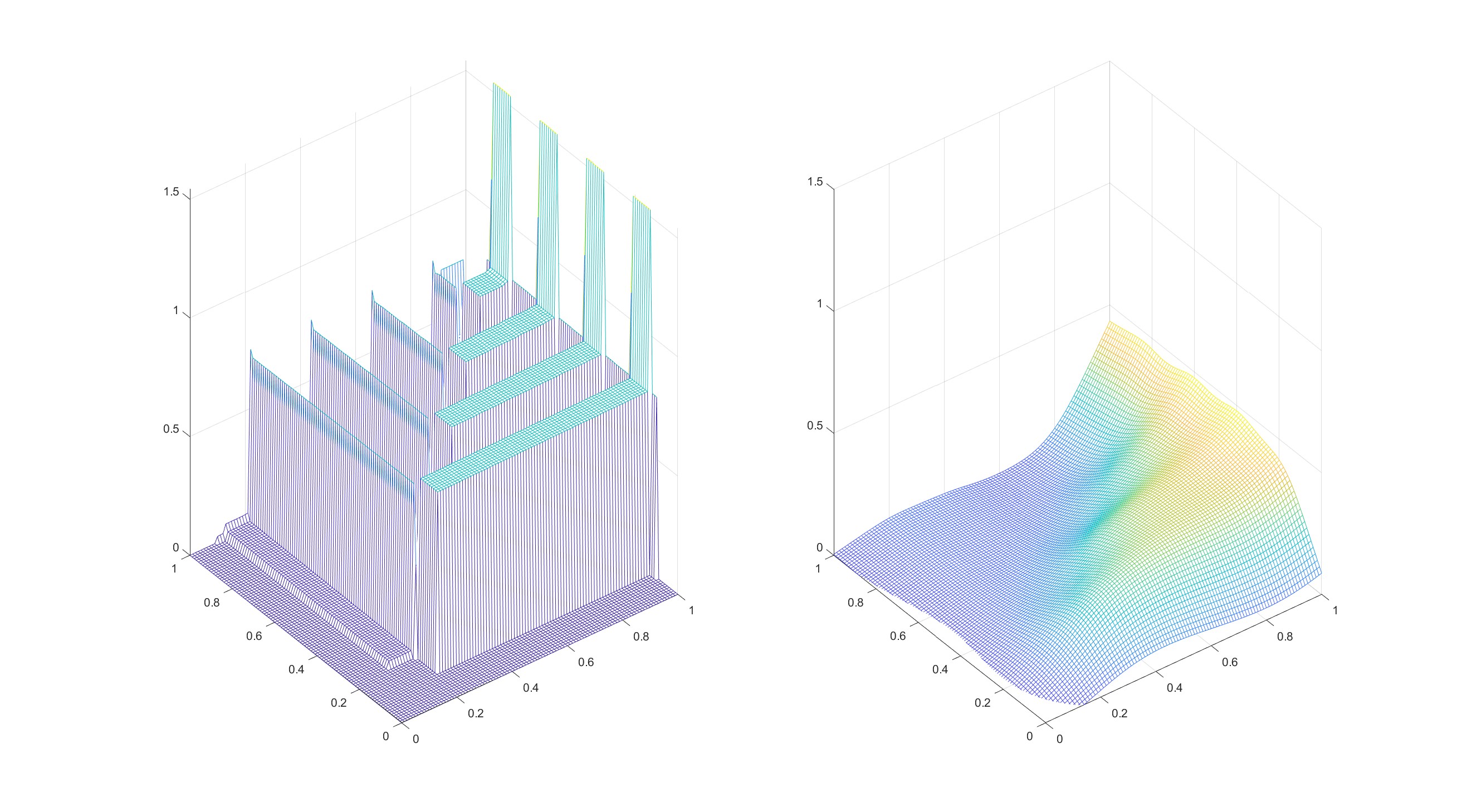} \\
                \includegraphics[width=2.00in]{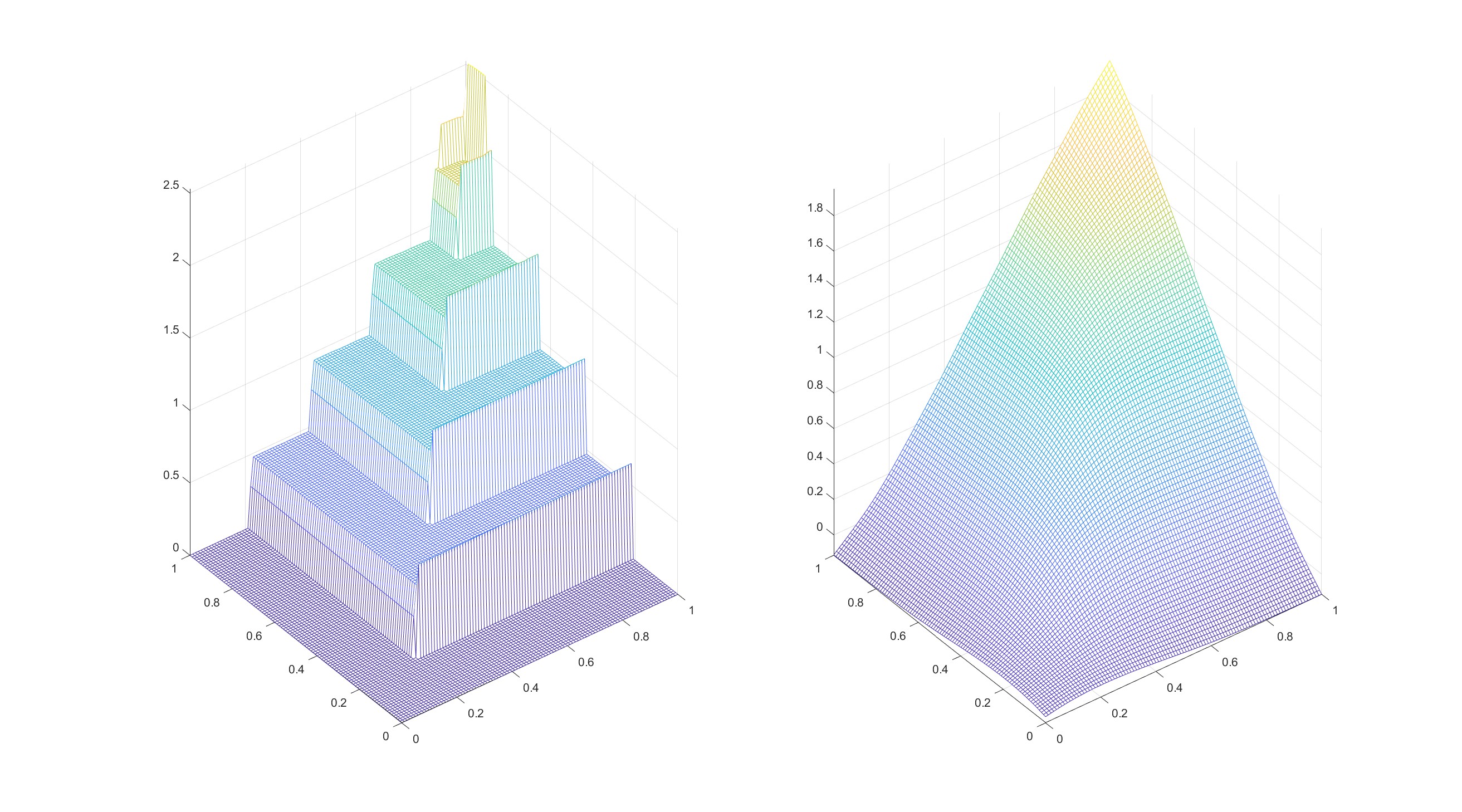}  &  \includegraphics[width=2.00in]{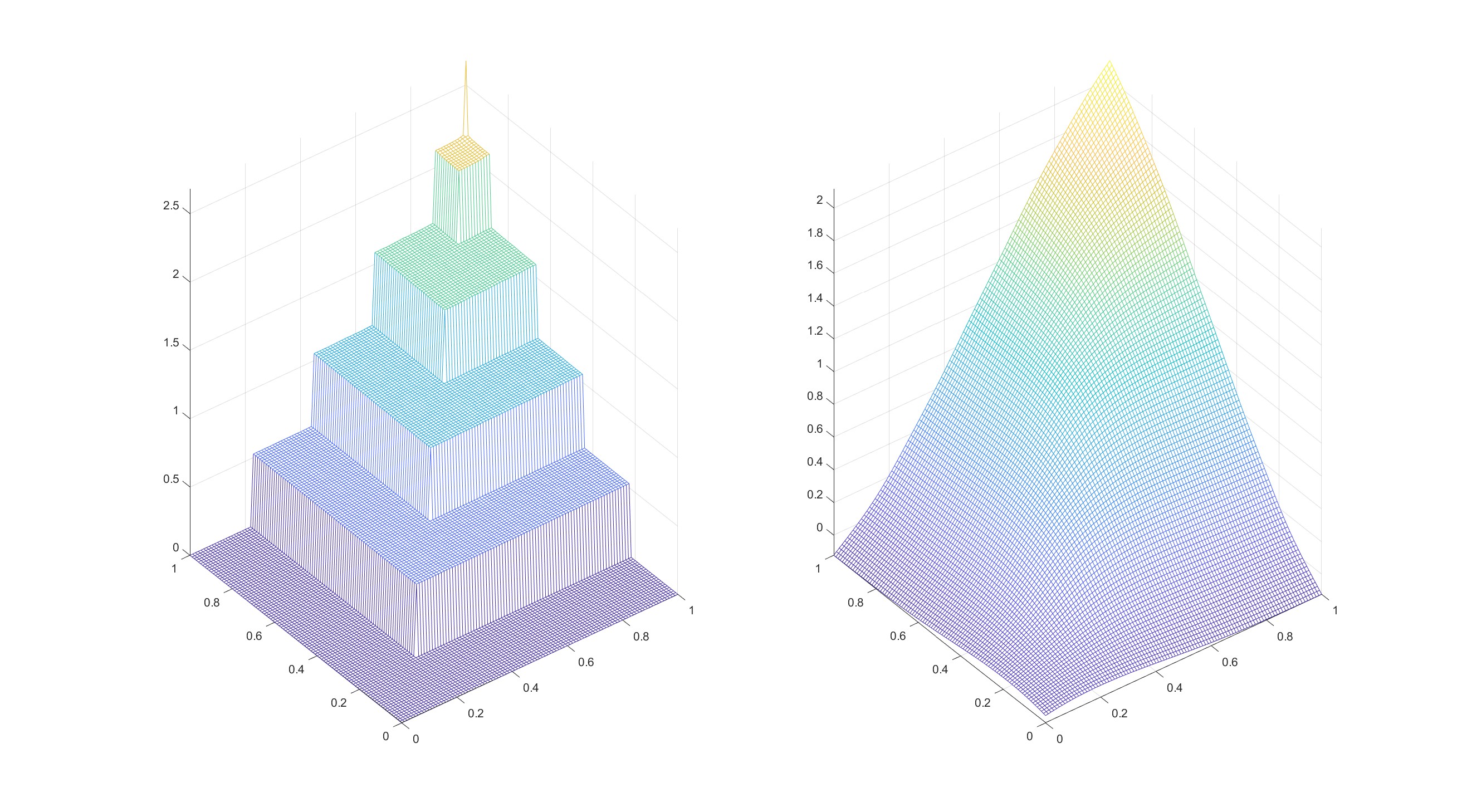} \\
    \end{tabular}
    \caption{\small{Some Examples of LKB-splines (the second and fourth columns) which are the smoothed version of the corresponding 
    KB-splines (the first and third columns)}. \label{LKB}}
\end{figure}

In \cite{LS22}, the researchers show that these LKB splines are indeed useful. 
In the 2D setting, they uniformly sampled $101^2$ across $[0,1]^2$ and fit the discrete least squares (DLS) approximation of a continuous function $f$ with LKB-splines. 
The following 10 testing functions across different families of continuous functions are 
used to check the approximation accuracy. 
\begin{eqnarray*} 
  f_1&=& x^2; f_2= xy; f_3= \sin(x); \cr 
f_{4}&=& \tan(x-y)/\tan(1);\cr 
 f_{5}&=& \sin(\sin(\sin(\sin(x^2-y^2)))) ;\cr 
  f_{6}&=& \exp(1-(x-0.5)^2-(y-0.5)^2)/\exp(1);\cr
 f_{7}&=& \log(1+x^2+y^2)/\log(4);\cr
 f_{8}&=&(x+2y)/(3(1+y^2+x^2));\cr
 f_{9}&=& \tan(x^2-y^2)/\tan(1);\cr
 f_{10}&=& \exp(-\cos(1+\sin(1+\cos(x^2-y^2))))/\exp(1);\cr
\end{eqnarray*}

RMSEs in Table~\ref{2Dex1} are computed based on $1001\times 1001$ equally-spaced points over $[0, 1]^2$ to 
check the errors. Some numerical results in 2D setting are reported in this paper. 
\begin{table}[htpb]
\caption{RMSEs of the DLS Fitting of 10 Testing Functions Using $2n$ LKB-splines in 2D with $101^2$ 
Sampled Data. \label{2Dex1}}
\centering
\begin{tabular}{|c|c|c|c|c|}\hline 
Testing Funs & $n=10$ & $n=100$ & $n=1000$ & $n=10000$ \cr \hline  
   $f_1$& 1.126e-02  & 2.610e-04 &  1.200e-04 & 1.248e-07 \cr \hline  
   $f_2$&  2.865e-03 &  7.269e-04 & 2.590e-05 &1.577e-08 \cr \hline  
  $f_3$& 3.609e-03  & 7.526e-05 &  3.407e-05 & 4.203e-08 \cr \hline  
  $f_4$&   2.969e-03 &  2.974e-04 &  1.453e-04 & 2.460e-07 \cr \hline  
  $f_{5}$&   1.886e-02&   7.694e-04 &  3.071e-04 &2.106e-07 \cr \hline  
 $f_{6}$&  4.513e-03  & 1.828e-04 &  6.697e-05 & 6.484e-08 \cr \hline  
  $f_{7}$&   1.957e-03 &  8.415e-05 &  4.009e-05 & 6.345e-08 \cr \hline  
  $f_{8}$&   2.685e-03 &  1.065e-04 &  3.946e-05 & 2.434e-08 \cr \hline  
  $f_{9}$&  2.058e-02  & 1.333e-03 &  6.295e-04 & 1.096e-06 \cr \hline  
  $f_{10}$&   1.502e-03 &  8.060e-05 &  3.472e-05 & 5.492e-08 \cr \hline  
   \end{tabular}
   \end{table}  
 Much more results can be found in \cite{LS23}. These show that the denoising by using multivariate spline  functions works very well. Furthermore, the researchers in \cite{LS22} used the pivotal point set 
 for approximating 2D and 3D functions. With the function values over a set of $2n$ pivotal points 
 in $[0, 1]^2$, their $2n$ LKB splines can approximate many continuous functions very well with the 
 convergence rate $O(1/n)$ if the functions are K-Lipschitz continuous.  Similar for the 3D, 4D, 6D settings. 
See \cite{LS22} and \cite{LS23}. These show that  
 the curse of dimensionality can be overcome when using LKB splines.

\section{Conclusions and Remarks}
This paper is dedicated to Professor Larry L. Schumaker in recognition of his lifelong dedication and significant contributions to the theory and applications of multivariate splines. His work has been immensely fruitful, leading to the development of spline functions that are not only extremely useful but also unexpectedly effective in overcoming the challenges of high-dimensional function approximation. Specifically, the LKB splines, referenced in the preceding section, have shown great potential in applications such as numerical solutions to the Poisson equation (see \cite{LS23}) and in numerical quadrature techniques.

\bmhead{Acknowledgments}
The authors are very grateful to the editor and referees for their helpful comments.

\section*{Declarations}


\begin{itemize}
\item \textbf{Funding.} The author is supported by the Simons Foundation for collaboration grant \#864439. 
\item \textbf{Competing interests.} The  author states that there is no conflict of interest.
\end{itemize}


\begin{thebibliography}{88}
\bibitem{ANS96a}
P. Alfeld, M. Neamtu and L.L. Schumaker, 
Bernstein--B\'ezier polynomials on spheres, and sphere--like surfaces, Comput. Aided Geom. Design 13 (1996) 333--349. 

\bibitem{ANS96b} 
P. Alfeld, M. Neamtu and L.L. Schumaker, 
Dimension and local bases of homogeneous spline spaces, SIAM J. Math. Anal. 27 (1996) 1482--1501. 
 
\bibitem{ANS96c}
P. Alfeld, M. Neamtu and L.L. Schumaker, Fitting scattered data on sphere--like surfaces using 
spherical splines, J. Comput. Appl. Math. 73 (1996) 5--43.  

\bibitem{ASS92}
P. Alfeld,   L.L. Schumaker, and M. Sirvent, On dimension and existene of local bases for multivariate spline spaces, J. Approx. Theory 70(1992), 243--264. 
 

\bibitem{ALW06}
G. Awanou, M. -J. Lai, and P. Wenston. The multivariate 
spline method
for scattered data fitting and numerical solution of partial
differential equations. In Wavelets and splines: Athens
2005,  pages 24--74. Nashboro Press, Brentwood, TN, 2006.

\bibitem{BLS06}
V. Baramidze,  Lai, M. -J. and Shum, C. K., Spherical Splines for Data Interpolation and Fitting, SIAM Journal of Scientific Computing, vol. 28 (2006) pp. 241--259.

\bibitem{BL11}
V. Baramidze and Lai, M. -J., Convergence of Discrete and Penalized Least Squares of Spherical 
Splines, Journal of Approximation Theory, (2011) pp. 1091--1106

\bibitem{BL18}
V. Baramidze and Lai, M. -J., Nonnegative Data Interpolation by Spherical Splines, J. Applied and Comput. Math., vol. 342 (2018) pp. 463--477. 

\bibitem{BFO10} {J. -D. Benamou, B. D. Froese and A. M. Oberman}, {\em Two Numerical Methods for the elliptic Monge-Amp\'ere equation}, ESAIM: Mathematical Modelling and Numerical Analysis (2010).

\bibitem{deB78} C. de Boor, 
A Practical Guide to Splines, Springer Verlag, 1978. 

\bibitem{BH71}
J. H.Bramble and S. R. Hilbert,  
Bounds for a class of linear functionals with applications to Hermite interpolation, Numerische
Math.  16(1971), pp. 362--369.

\bibitem{CLW21}
S. Chen, J. Liu, X-J Wang, Global
regularity for the Monge-Ampere equation with natural boundary condition, 
Ann. of Math. (2) 194(3): 745-793 (November 2021). 

\bibitem{DHLMX19}
C. Deng, Q. Hong, M.-J. Lai, C. Mersmann and Y. Xu, Multivariate Splines for Curve and Surface
Interpolation and Fitting, submitted, 2019. 

\bibitem{FS96}
G. Fasshauer and L. L. Schumaker, On minimal energy  surfaces using parametric splines, 
Comp. Aided Geom. Design, vol. 18, 1996, pp. 45--79.  
 
\bibitem{GL20} F. Gao and M. -J. Lai, A new $H^2$
regularity condition of the solution to Dirichlet problem
of the Poisson equation and its applications, Acta Mathematica Sinica, vol. 36 (2020) pp. 21--39. 

\bibitem{GP89}
F. Girosi and Poggio, T.,
Representation properties of networks: Kolmogorov's theorem is irrelevant. 
Neural Computation, 1(1989), 465-469. 


\bibitem{GS02}
von Golitschek, M. and L. L. Schumaker, Bounds on projections onto
bivariate polynomial spline spaces with stable local bases, Const. Approx. 18 (2002), 241--254. 

\bibitem{GLS02}
von Golitschek, M., Lai, M. -J. and Schumaker, L. L., Bounds for Minimal Energy Bivariate 
Polynomial Splines, Numerische Mathematik, vol. 93 (2002) pp. 315--331.

\bibitem{GL13}
W. H. Guo and Lai, M. -J., Box Spline Wavelet Frames for Image Edge Analysis, SIAM Journal Imaging Sciences, vol. 6 (2013) pp. 1553--1578.

\bibitem{IP03}
B. Igelnik and N. Parikh. Kolmogorov's spline network. IEEE Transactions on Neural
Networks, 14(4):725--733, 2003. 

\bibitem{L89}
M. -J. Lai, On Construction of Bivariate and Triavariate Vertex Splines, Dissertation, Dept. of Math., Texas A\&M University, College Station, TX, 1989. 

\bibitem{LL22}
M.-J. Lai and J. Lee, A multivariate spline based collocation method for numerical solution of partial differential 
equations, SIAM J. Numerical Analysis, vol. 60 (2022) pp. 2405--2434.  

\bibitem{LL23}
M. -J. Lai and Lee, J., Trivariate Spline Collocation Methods for  Numerical Solution to 3D 
Monge-Amp\`ere equation,  Journal of Scientific Computing, vol. 95, article no. 56, 2023. 

\bibitem{LL24}
M. -J. Lai and Lee, J.,  A Bivariate Spline based Collocation Method for 
Numerical Solution to Optimal Transport Problem, submitted, 2024. 

\bibitem{LS22}
M. -J. Lai and Z. M. Shen, Kolmogorov superposition theorem can break the curse of 
dimensionality when approximating  high dimensional functions, submitted, 2023.

\bibitem{LS23}
M. -J. Lai and Z. M. Shen, The optimal rate for linear LKB apline approximation of high dimensional 
continuous functions and its application,  under preparation, 2023. 


\bibitem{LS98}
M. -J. Lai and Schumaker, L. L., Approximation Power of Bivariate Splines, 
Advances in Computational Mathematics, vol. 9 (1998) pp. 251--279.

\bibitem{LS07}
M. -J. Lai and Schumaker, L. L., Spline Functions over Triangulations, Cambridge University
Press, 2007. 

\bibitem{LS09}
M. -J. Lai and Schumaker, L. L., Domain Decomposition Method for Scattered Data Fitting, SIAM Journal on Numerical 
Analysis, vol. 47 (2009) pp. 911--928. 

\bibitem{LS19}
M. -J. Lai and Schumaker, L.L., A private communication, 2019. 

\bibitem{LSBW09}
M. -J. Lai, C.K. Shum, 
V. Baramidze,  and P. Wenston,  Triangulated Spherical Splines 
for Geopotential Reconstruction, Journal of Geodesy, vol. 83 (2009) pp. 695--708. 

\bibitem{LW13}
Lai, M. -J. and Wang, L., Bivariate penalized splines for regression, Statistica Sinica, vol. 23 (2013) pp. 1399--1417. 

\bibitem{LW21}
M. -J. Lai and Y. Wang, Sparse Solutions to Underdetermined Linear Systems, Publication, 
Philadelphia (2021).

\bibitem{L23}
J. Lee, A Multivariate Spline Method for Numerical Solution of Partial Differential Equations, Dissertation, University of Georgia, 2023.

\bibitem{L66}
G. G. Lorentz, Approximation of Functions, Holt, Rinehart and Winston, Inc. 1966.  



\bibitem{NS04}
M. Neamtu and Schumaker, Larry L. On the approximation order of splines on spherical 
triangulations. Adv. Comput. Math. 21 (2004), no. 1--2, 3--20.

\bibitem{S81} 
L. L. Schumaker, Spline Functions: Basic Theory, Wiley, New York, 1981. 

\bibitem{S15}
L. L. Schumaker, Spline Functions: Computational Methods. SIAM Publication, Philadelphia (2015).

\bibitem{S79}
L. L. Schumaker, On the dimension of spaces of piecewise polynomials in two variables, in {\sl Multivariate Approximation Theory}, W. Schempp and K. Zeller(eds.), 
Bessel, Birkh\"auser, 1979, 396--412.  

\bibitem{S84}
L. L. Schumaker,  On spaces of piecewise polynomials in two variables, in {\sl Approximation Theory and Spline Functions}, S. P. Singh, J. H. Burry, and B. Watson (eds.), 
Dordrecht, Reidel, 1984, 151--197. 

\bibitem{S24}
 Z. Shen,  Sparse Solution Techniques for Graph Clustering and Function Approximation. Ph.D. Dissertation, University of Georgia, 2024. 

\bibitem{V03} C. Villani, Topics in Optimal Transportation, AMS, Providence, RI, 2003. 



\bibitem{WWLG20}
L. Wang,  G.  Wang,  M.-J.  Lai,  and L. Gao, 
Efficient Estimation of Partially Linear Models for
Spatial Data over Complex Domains, Statistica Sinica, 30 (2020) pp. 347--360. 

\bibitem{X19}
Y. D. Xu, Multivariate Splines for Scattered Data Fitting. Eigenvalue Problems, and Numerical
Solution to Poisson equations, Ph.D. Dissertation, 2019, University of Georgia, Athens, GA.
\end{thebibliography}
\end{document}